\def\printnotation{{%
\def\indexname{Index of notation}
\begin{theindex}
\@input{\jobname.ntn}
\end{theindex}
}}
\newtheorem{theorem}{Theorem}[section]
\newtheorem{lemma}[theorem]{Lemma}
\newtheorem{proposition}[theorem]{Proposition}
\newtheorem{corollary}[theorem]{Corollary}
\theoremstyle{definition}
\newtheorem{definition-proposition}[theorem]{Definition-Proposition}
\newtheorem{definition}[theorem]{Definition}
\newtheorem{example}[theorem]{Example}
\newtheorem{remark}[theorem]{Remark}
\newcommand{\rightloop}{%
           \mathrel{\raisebox{.1em}{%
           \reflectbox{\rotatebox[origin=c]{-90}{$\circlearrowright$}}}}}
\newcommand{\leftloop}{%
           \mathrel{\raisebox{.1em}{%
           \reflectbox{\rotatebox[origin=c]{90}{$\circlearrowright$}}}}}
\newcommand{\bigboxtimes}[1]{\underset{#1}{\boxtimes}}
\newcommand{\dd}{{\bf d}}
\newcommand{\edit}[1]{}
\newcommand{\Vect}{\operatorname{Vect}}
\newcommand{\Hom}{\operatorname{Hom}}
\newcommand{\Rep}{\text{Rep}}
\newcommand{\tr}{\text{tr}\,}
\newcommand{\ad}{\text{ad}\,}
\newcommand{\cC}{\mathcal{C}}
\newcommand{\cD}{\mathcal{D}}
\newcommand{\cO}{\mathcal{O}}
\newcommand{\dS}{\Big/\hspace{-5pt}\Big/}
\newcommand{\g}{\mathfrak{g}}
\newcommand{\ot}{\otimes}
\newcommand{\bt}{\boxtimes}
\newcommand{\id}{\operatorname{id}}
\newcommand{\CC}{\mathbb{C}}
\newcommand{\NN}{\mathbb{N}}
\newcommand{\ZZ}{\mathbb{Z}}
\newcommand{\coev}{\operatorname{coev}}
\newcommand{\ev}{\operatorname{ev}}
\newcommand{\Mat}{\operatorname{Mat}}
\newcommand{\tq}{q}
\def\HH{\hbox{${\mathcal H}$\kern-5.2pt${\mathcal H}$}}
\begin{document}
\title{Quantized Multiplicative Quiver Varieties}
\author{David Jordan}
\maketitle
\begin{abstract}
Beginning with the data of a quiver $Q$, and its dimension vector $\dd$, we construct an algebra $\cD_q=\cD_q(\Mat_\dd(Q))$, which is a flat $q$-deformation of the algebra of differential operators on the affine space $\Mat_\dd(Q)$. The algebra $\cD_q$ is equivariant for an action by a product of quantum general linear groups, acting by conjugation at each vertex. We construct a quantum moment map for this action, and subsequently define the Hamiltonian reduction $A^\lambda_\dd(Q)$ of $\cD_q$ with moment parameter $\lambda$. We show that $A^\lambda_\dd(Q)$ is a flat formal deformation of Lusztig's quiver varieties, and their multiplicative counterparts, for all dimension vectors satisfying a flatness condition of Crawley-Boevey: indeed the product on $A^\lambda_\dd(Q)$ yields a Fedosov quantization the of symplectic structure on multiplicative quiver varieties. As an application, we give a description of the category of representations of the spherical double affine Hecke algebra of type $A_{n-1}$, and its generalization constructed by Etingof, Oblomkov, and Rains, in terms of a quotient of the category of equivariant $\cD_q$-modules by a Serre sub-category of aspherical modules.
\end{abstract}


\section{Introduction}
Let us consider a reductive algebraic group $G$, with Lie algebra $\mathfrak{g}$.  Central to the representation theory of $G$ is the following organizing diamond:

\begin{equation}\label{Uqdiag}\xymatrix{ & U_q(\mathfrak{g}) \ar@{~>}[dl]_{\underset{\textrm{limit $q\to 1$}}{\textrm{\scriptsize quasi-classical }}} \ar@{~>}[dr]^{\underset{\textrm{limit $q\to 1$}}{\textrm{\scriptsize classical}}} & \\
                     U(\mathfrak{g}) \ar@{~>}[dr]_{\underset{\textrm{graded}}{\textrm{\scriptsize associated }}} && O(G) \ar@{~>}[dl]^{\underset{\textrm{degeneration}}{\textrm{\scriptsize rational }}}\\\
                     &  S(\mathfrak{g}) & }
                     \end{equation}

The quantum group $U_q(\mathfrak{g})$ (more precisely, its locally finite subalgebra $U'_q(\mathfrak{g})$) depends on a quantum parameter $q$, as well as a Rees parameter $t$; degenerating $q$ and $t$ appropriately, we can recover either the universal enveloping algebra $U(\mathfrak{g})$, or the coordinate algebra $O(G)$ of the group $G$.  The main idea of this thesis is to apply the same organizing diamond to the algebro-geometric constructions surrounding representations of quivers, with the end-goal of producing a new quantization of the coordinate algebra of a multiplicative quiver variety.

Let $Q=(V,E)$ denote a connected quiver, with vertex set $V$, and directed edge set $E$.  For $e\in E$, let $\alpha=\alpha(e)$ and $\beta=\beta(e)$ denote the tail and head of $e$, respectively.  Fix a function $\dd:V\to\ZZ_{\geq 0}, v\mapsto d_v$ (called a dimension vector), and consider the following affine variety and affine algebraic group, respectively:
$$\Mat_{\dd}(Q):=\prod_{e\in E} \Mat(\CC^{d_\alpha},\CC^{d_\beta}), \,\,\, \mathbb{G}^\dd:=\prod_{v\in V} GL(\CC^{d_v}).$$
We let $\mathbb{G}^\dd$ act on $X\in\Mat_\dd(Q)$ by change of basis at each vertex,
$$(g.X)_e:= g_{\beta(e)} X_e g_{\alpha(e)}^{-1}.$$
The doubled quiver $\overline{Q}=(V,\overline{E}=E\cup E^\vee)$ is built from $Q$ by adding an adjoint arrow \mbox{$\beta(e)\xrightarrow{e^\vee} \alpha(e)\in E^\vee$}, for each $e\in E$.  We have canonical isomorphisms,
$$ T^*\Mat(\CC^{d_\alpha},\CC^{d_\beta})\cong \Mat(\CC^{d_\alpha},\CC^{d_\beta}) \times \Mat(\CC^{d_\beta},\CC^{d_\alpha}),$$ with the standard symplectic pairing given by:
$$(X,Y)=\tr (X_eY_{e^\vee}-Y_eX_{e^\vee}).$$
Taken together, these give an identification $T^*\Mat_\dd (Q)\cong\Mat_\dd(\overline{Q})$.

The construction of multiplicative quiver varieties involves a certain open subset $T^*\Mat_\dd(Q)^\circ$ defined by a non-degeneracy condition, while the construction of quantized quiver varieties involves the algebra $\cD(\Mat_\dd(Q))$ of polynomial differential operators on $\Mat_\dd(Q)$, a quantization of the the symplectic structure on $T^*\Mat_\dd(Q)$.  We begin by seeking an algebra $\cD_q(\Mat_\dd(Q))$, situated as follows:
\begin{equation}\label{Dqdiag}\xymatrix@!C@C=-15pt@R=40pt{ & \mathcal{D}_q(\Mat_\dd(Q)) \ar@{~>}[dl]_{\underset{\textrm{limit $q\to 1$}}{\textrm{\scriptsize quasi-classical }}} \ar@{~>}[dr]^{\underset{\textrm{limit $q\to 1$}}{\textrm{\scriptsize classical}}} & \\
                     \cD(\Mat_\dd(Q)) \ar@{~>}[dr]_{\underset{\textrm{graded}}{\textrm{\scriptsize associated }}} && \cO(T^*\Mat_\dd(Q)^\circ) \ar@{~>}[dl]^{\underset{\textrm{degeneration}}{\textrm{\scriptsize rational }}}\\\
                     &  \cO(T^*\Mat_\dd(Q)) & }
                     \end{equation}
That is, the algebra $\cD_q=\mathcal{D}_q(\Mat_\dd(Q))$ that we construct depends on a parameter $q$, as well as a Rees parameter $t$; by degenerating $q$ and $t$ appropriately, we can recover either the algebra $D(\Mat_\dd(Q))$ or $O(T^*\Mat_\dd(Q)^\circ)$.  We give a straightforward presentation of $\cD_q$ by generators and relations, and show that it admits a PBW basis of ordered monomials analogous to the usual basis of $\cD(\Mat_\dd(Q))$.

There is more than simply an analogy between the diagrams \eqref{Uqdiag} and \eqref{Dqdiag}: indeed each vertex of \eqref{Dqdiag} receives a ``moment map" from the corresponding vertex of \eqref{Uqdiag}.  We briefly recall each of these maps here, directing the reader to Section \ref{prelims} for further details.

Recall that the action of $\mathbb{G}^\dd$ on $T^*\Mat_\dd(Q)$ is Hamiltonian; on the level of coordinate algebras, this means we have a $\mathbb{G}^\dd$-equivariant homomorphism, $$\mu^\#:S(\g)\to \cO(T^*\Mat_\dd(Q)),$$ of Poisson algebras, inducing the $\mathbb{G}^\dd$-action, via the Poisson bracket.  Likewise, there is a quantized moment map, a $\mathbb{G}^\dd$-equivariant homomorphism, $$\hat\mu^\#:U(\mathfrak{g})\to\cD(\Mat_\dd(Q)),$$  of algebras, inducing the $\mathbb{G}^\dd$-action by via the Lie bracket.  Finally, the ``group-valued" moment map for the action of $\mathbb{G}^\dd$ on $T^*\Mat_\dd(Q)^\circ$ is, on the level of coordinate algebras, a $\mathbb{G}^\dd$-equivariant homomorphism, $$\widetilde{\mu}^\#:\cO(\mathbb{G}^\dd)\to\cO(T^*\Mat_\dd(Q)^\circ)$$ of quasi-Poisson algebras, which again induces the action of $\mathbb{G}^\dd$ on $T^*\Mat_\dd(Q)^\circ$.

In Section \ref{q-moment}, we construct an equivariant homomorphism, $$\mu_q^\#: U'_q(\g^\dd)\to\cD_q(\Mat_\dd(Q)),$$ which is a quantum moment map in the sense of \cite{L}, \cite{VV}: the action of $U_q(\g^\dd)$ on $\cD_q(\Mat_\dd(Q))$ is given by a variant of the adjoint action.  Each of these moment maps is compatible with the simultaneous degenerations of source and target; we can express these relationships in the following diagram (we abbreviate $X=\Mat_\dd(Q)$):
\begin{equation}\label{muqdiag}
\xymatrix@!C@=10pt{ & U_q(\mathfrak{g}^\dd) \ar@{~>}[dl]\ar@{~>}[dddd] \ar@{->}[rr]^{\mu^\#_q}&&\mathcal{D}_q(X) \ar@{~>}[dl] \ar@{~>}[dddd]&\\
                     U(\mathfrak{g}^\dd) \ar@{~>}[dddd] \ar@{->}[rr]^{\phantom{===}\hat\mu^\#}&& \cD(X) \ar@{~>}[dddd]&&\\
                    &&&&\\
                     &&&&\\
                     &O(\mathbb{G}^\dd) \ar@{~>}[dl] \ar@{->}[rr]^{\widetilde{\mu}^\#\phantom{==}}|{\phantom{=}} &&O(T^*X^\circ) \ar@{~>}[dl]\\
                       S(\mathfrak{g}^\dd)\ar@{->}[rr]^{\mu^\#} &&O(T^*X)  &} 
\end{equation}
Each of these moment maps allows us to construct the so-called ``Hamiltonian reduction" of the target algebra with respect to a character of the source algebra.  The reduction of $O(T^*X)$ by $\mu^\#$ at a central coadjoint orbit $\lambda$ yields the algebra $\Gamma(\mathcal{M}_\dd^\lambda(Q),\cO_{\mathcal{M}^\lambda_\dd(Q)})$ of global sections of structure sheaf of the quiver variety $\mathcal{M}_\dd^\lambda(Q)$ constructed by Lusztig.  Likewise, the reduction of $O(T^*X^\circ)$ by $\widetilde{\mu}^\#$ at a central adjoint orbit $\xi$ of $\mathbb{G}^\dd$ yields the algebra $\Gamma(\widetilde{\mathcal{M}}_\dd^\xi(Q),\cO_{\widetilde{\mathcal{M}}^\xi_\dd(Q)})$ of global sections of the structure sheaf on the multiplicative quiver variety $\widetilde{\mathcal{M}}^\xi_\dd(Q)$ constructed by Crawley-Boevey and Shaw \cite{C-BS}.  The quantized Hamiltonian reduction of $\cD(X)$ by $\hat\mu^\#$ at a character of $U(\g^\dd)$ yields the a quantization $\widehat{\Gamma}(\mathcal{M}_\dd^\lambda(Q), \cO_{\mathcal{M}^\lambda_\dd(Q)})$ of the symplectic structure on $\mathcal{M}^\lambda_\dd$.  Finally, the quantum Hamiltonian reduction of $\cD_q(X)$ by $\mu^\#_q$ at a character $\xi$ of $U'_q(\g^\dd)$ is a new algebra, which we denote $A^\xi_\dd(Q)$.  We obtain the following organizing diamond, echoing \eqref{Uqdiag}, \eqref{Dqdiag}:
\begin{equation}\label{Addiag}\xymatrix@!C@C=-15pt@R=40pt{ & A_\dd^\xi(Q) \ar@{~>}[dl]_{\underset{\textrm{limit $q\to 1$}}{\textrm{\scriptsize quasi-classical }}} \ar@{~>}[dr]^{\underset{\textrm{limit $q\to 1$}}{\textrm{\scriptsize classical}}} & \\
                     \widehat{\Gamma}(\mathcal{M}_\dd^\lambda(Q), \cO_{\mathcal{M}^\lambda_\dd(Q)}) \ar@{~>}[dr]_{\underset{\textrm{graded}}{\textrm{\scriptsize associated }}} && \Gamma(\widetilde{\mathcal{M}}_\dd^\xi(Q),\cO_{\widetilde{\mathcal{M}}_\dd^\xi(Q)}) \ar@{~>}[dl]^{\underset{\textrm{degeneration}}{\textrm{\scriptsize rational }}}\\\
                     &  \Gamma(\mathcal{M}_\dd^\lambda(Q),\cO_{\mathcal{M}^\lambda_\dd(Q)}) & }
                     \end{equation}
The construction of the algebra $A^\xi_\dd(Q)$, and enumeration of its basic properties, is the primary focus of the this paper.  At the end, we discuss degenerations, and give an application to the representation theory of the spherical double affine Hecke algebra of type $A_n$, and those associated to star-shaped quivers.
\subsection{Outline of results}\label{sec:IntRes}

In Section \ref{construction}, beginning with the data of a quiver $Q$ and its dimension vector $\dd$, we construct an algebra $\cD_q=\cD_q(\Mat_\dd(Q))$.  The algebra $\cD_q$ is a braided tensor product of the algebras $\cD_q(e)$ associated to each edge $e$ of $Q$, while each $\cD_q(e)$ is a straightforward $q$-deformation of the Weyl algebra associated to the standard affine space $\Mat(d_\alpha,d_\beta)$.  The relations of $\cD_q(e)$ are given in a formal way designed to make their equivariance properties evident; the reader interested in a hands-on, RTT-type presentation can skip ahead to Sections \ref{RTTsec1} and \ref{RTTsec2}.

Our first theorem is Theorem \ref{flatness}, which states that the algebra $\cD_q$ is a flat $q$-deformation of the algebra $\cD(\operatorname{Mat}_\dd(Q))$ of differential operators on the matrix space of the quiver $Q$. Our proof is modeled on Theorem 1.5 of \cite{GZ}, and consists of constructing an explicit PBW basis of ordered monomials, which clearly deforms the usual basis of $\cD(\Mat_\dd(Q))$.  

The defining relations for $\cD_q$ in examples related to quantum groups are similar to the $FRT$-construction of quantum coordinate algebras, and are also closely related to the algebras $\cD_q(GL_N)$ of quantum differential operators on $GL_N$, which have been studied by many authors.  In Sections \ref{RTTsec1} and \ref{RTTsec2}, we list out the relations in detail for these examples of interest, and explain their relation to known constructions.

The algebra $\cD_q$ possesses certain elements $\operatorname{det}_q(e)$, for each edge $e\in E$, which conjugate standard monomials in $\cD_q$ by  powers of $q$ (the proof of this assertion is delayed until Section 4, Corollaries \ref{qcentral} and \ref{oreset}).  We therefore localize $\cD_q$ at the multiplicative Ore set generated by these $q$-determinants, to obtain an algebra $\cD_q^\circ$, in which certain quantum matrices become invertible.

In Section \ref{q-Fourier}, we construct a $q$-analog, $\mathcal{F}$, of the classical Fourier transform map on the algebra $\cD(\Mat_\dd(Q))$, which allows us to prove the independence of $\cD^\circ_q(\Mat_\dd(Q))$ on the orientation of $Q$.  Our main results in this section are Definition-Propositions \ref{gencasenoloop} and \ref{gencaseloop}, where $\mathcal{F}$ is defined explicitly on generators; necessary relations are checked directly.  As a warmup, we work out one-dimensional examples in Definition-Propositions \ref{easycasenoloop} and \ref{easycaseloop}, whose proofs foreshadow the general one.  


In Section \ref{q-moment}, we define a $q$-deformed, braided analog $\mu_q^\#$ of the multiplicative moment map underlying relation \eqref{mdppa}.  We subsequently define an analog of Hamiltonian reduction in this context, which is closely related to Lu's notion \cite{L} for Hopf algebras, and is also inspired by the quantum moment maps appearing in \cite{VV}.  The output of this Hamiltonian reduction is an algebra $A_\dd^\lambda(Q)$, which $q$-quantizes the space $\mathcal{M}_\dd^\lambda$.  The main results of Section 6 are Definition-Propositions \ref{momentalpha} and \ref{momentbeta}, and Propositions \ref{mommap} and \ref{mommaploop}, in which the moment map is defined, and the moment map condition is verified.

In Section \ref{degeneration}, we consider relations between the algebra $A^\lambda_\dd(Q)$ and well-known constructions in representation theory - specifically quiver varieties and spherical double affine Hecke algebras.  To begin, we study flatness properties of $A^\lambda_\dd(Q)$ as the parameter $q$ varies.  While the flatness of the algebra $\cD_q$ is proven directly, the flatness of the algebra $A_\dd^\lambda(Q)$ is considerably more subtle.  This is because the argument we give for $\cD_q$ relies upon the existence of a $\ZZ$-grading with finite dimensional graded components; this grading does not descend to $A_\dd^\lambda(Q)$.

For this reason, we restrict ourselves to situations where the classical moment map $\mu$ is flat (as in Theorem \ref{CBflat}), and we consider the question of formal flatness of  $A_\dd^\lambda(Q)$.  That is, we set $q=e^\hbar$, and consider the algebras $\cD_q$ and $U_q(\g^\dd)$, moment map $\mu_q$, and Hamiltonian reduction $A_\dd^\lambda(Q)$ all in the category of $\CC[[\hbar]]$-modules.   We prove that   $A_\dd^\lambda(Q)$ is a topologically free $\CC[[\hbar]]$-module, so that the deformation is flat in the formal neighborhood of $q=1$. 

Next, we address the question: what algebra does $A_\dd^\lambda(Q)$ deform?  In answering this question, we must explain that there is a unifier in the construction of $\mu_q^\#$, and in its simultaneous relation to (classical, multiplicative, and quantized) moment maps $\mu$.  Recall the two variations of Hamiltonian reduction in classical geometry: ``quantized Hamiltonian reduction" and ``quasi-Hamiltonian reduction".  In the former, the moment map is a homomorphism of algebras $$\widehat{\mu}^\#: U(\g^\dd)\to \cD(\Mat_\dd(Q)),$$ while in the latter, we have a morphism of varieties $\widetilde{\mu}: T^*\Mat_\dd(Q)^\circ \to \mathbb{G}^\dd$, or equivalently a map of algebras $$\widetilde{\mu}^\#:\cO(\mathbb{G}^\dd)\to \cO(T^*\Mat_\dd(Q)^\circ).$$  In classical geometry, there are analogies between these moment maps, but not a precise connection.  We will see that the map $\mu_q^\#$ bears a precise relationship to both maps $\widetilde{\mu}^\#$ and $\widehat{\mu}^\#$, under degeneration.
 
Recall that the Hopf algebra $U=U_q(\g^\dd)$ has a large co-ideal subalgebra $U'$, consisting of the elements which are locally finite under the adjoint action of $U$ on itself (see \cite{JL} for details, and for the sense in which $U'$ is ``large").  The homomorphism $\mu_q^\#$ maps out of $U'$, and is a $q$-deformed quantum moment map, as considered by Lu \cite{L} and \cite{VV}.  On the other hand, we have Majid's covariantized coordinate algebra $A_q(\mathbb{G}^\dd)$, a flat deformation of $O(\mathbb{G}^\dd)$, and we have the Rosso isomorphism $\kappa: A_q(G)\overset{\sim}{\rightarrow} U'$ (see \cite{Ma}).  Thus, we may also view $\mu_q^\#$ as a quantization of the group-valued moment map underlying equation \eqref{mdppa}.  We summarize these relationships in the following diagram:
$$\xymatrix{ \cO(\mathbb{G}^\dd)  \ar@{->}[d]_{\mu^\#}& \ar@{~>}[l]^{q\to 1} A_q(\mathbb{G}^\dd) \ar@{<->}[r]_{\kappa}^{\sim} \ar@{->}[d]_{\mu^\#_q}& U'_q(\g^\dd) \ar@{->}[d]_{\mu^\#_q} \ar@{~>}[r]_{q\to 1} & U(\g) \ar@{->}[d]_{\widehat{\mu}}
 \\ \cO(\mathrm{Mat}_\dd(Q)^\circ)&\ar@{~>}[l]^{\,\,\,\,\,\,\,\,\,\,\,\,q\to 1} \cD_q \ar@{=}[r]& \cD_q \ar@{~>}[r]_{q\to 1\,\,\,\,\,\,\,\,}& \cD(\mathrm{Mat}_\dd(\overline{Q}))}
$$
Thus, taking quasi-Hamiltonian reduction along $\mu^\#$, $q$-deformed quantum Hamiltonian reduction along $\mu_q^\#$, and quantum Hamiltonian reduction along $\widehat{\mu}$, we have the ``commutative diagram" \eqref{Addiag} of deformations and degenerations of the corresponding Hamiltonian reductions.

As an application, we show in Theorem \ref{AnDAHA} that the algebra $A_\dd^\lambda(Q)$ is isomorphic to the spherical DAHA of type $A_n$, when $Q$ and $d$ are the Calogero-Moser quiver and dimension vector:
$$(Q,d)= \overset{1}{\bullet}\rightarrow\overset{n}{\bullet}\rightloop,$$
which allows us to give a new description of the representation category of the spherical DAHA as a quotient of the category of equivariant $\cD_q(\Mat_\dd(Q))$-modules by a certain Serre subcategory of aspherical modules.  This assertion follows from generalities about flat deformations, together with the fact that the spherical DAHA is the universal deformation of the corresponding rational Cherednik algebra, which itself may be built by quantum Hamiltonian reduction from $\widehat{\Gamma}(\mathcal{M}_\dd^\lambda(Q), \cO_{\mathcal{M}^\lambda_\dd(Q)})$.  In fact, because we have restricted to formal parameters $q=e^\hbar$, this result is not very valuable, as the spherical DAHA is actually a trivial deformation over $\CC[[\hbar]]$ of the spherical rational Cherednik algebra.  However, we expect this isomorphism to hold also numerically, for generic $q$ and $\lambda$.

Likewise, if $Q$ is a so-called star-shaped quiver (meaning all vertices are uni- or bi-valent, except for a single vertex, called the node) we have an isomorphism between $A_\dd^\lambda(Q)$ and the generalized spherical DAHA, defined in \cite{EOR}.  Star-shaped quivers play a central role in the approach to the Deligne-Simpson problem in \cite{C-B2}, \cite{C-BS}.  These applications suggest that the algebras $A_\dd^\lambda(Q)$ may be viewed as further generalizations of the (spherical) DAHA, to an arbitrary quiver $Q$.

\subsection{Future directions}

One motivation of this work is to extend the unifying structure of quivers to a myriad of constructions in ``quantum" algebraic geometry, such as quantum planes, and their $q$-Weyl algebras, FRT algebras, reflection equation algebras, differential operator algebras on quantum groups, and perhaps most importantly, double affine Hecke algebras.

In particular, for certain explicit quivers and parameters, we are able to identify our algebras $A_\dd^\xi(Q)$ with spherical double affine Hecke algebras of type $A$, and also with generalized double affine Hecke algebras associated to star-shaped quivers (see Example \ref{starexample}).  We anticipate relations with Gan-Ginzburg algebras \cite{GG1}, as $q$-deformations of Montarani's constructions \cite{Mo}, which relate symplectic reflection algebras to $D$-modules on quivers.  Given the ubiquity of quiver formalism in Lie theory, we expect to be able to apply our deformation procedure in a wide class of examples to obtain deformations of Lie theoretic objects.

When the quantum parameter $q$ is a non-trivial root of unity, the algebras $\cD_q$ and their reductions $A^\xi_\dd(Q)$ obtain large ``$q$-centers;'' in simple examples, $\cD_q$ is Azumaya over its $q$-center, which turns out to be identified with $O(T^*X^\circ)$.  In such examples, the algebra $A^\xi_\dd(Q)$ is Azumaya over its $q$-center, which is identified with the corresponding multiplicative quiver variety.  We intend to explore this property in general, and thus obtain a second ``quasi-classical" degeneration of our algebra, analogous to the well-known ``$p$-center" phenomenon for varieties in characteristic $p$.

More generally, we hope to combine the techniques of \cite{J} with the new examples constucted herein to produce representations of braid groups of punctured, Riemann surfaces of higher genus.

\subsection{Acknowledgments}
I am very grateful to Pavel Etingof and Kobi Kremnizer, for proposing this line of research, and for innumerable helpful conversations and suggestions throughout my time at MIT; this project would have proceeded nowhere without their guidance.  In particular, the definition of $\cD_q(e)$ for non-loops $e$ contained herein was essentially proposed to me by Kobi Kremnizer, who also conjectured a flatness result along the lines of Theorem \ref{flatness}.  I am grateful to Pavel Etingof for patient explanations about the flatness and degeneration arguments in Section \ref{flatness-sec}. I would also like to acknowledge helpful discussions with Adrien Brochier and Damien Calaque, during which a variation of the homomorphism of Definition-Proposition \ref{gencaseloop} was discovered.  I am grateful to Ivan Losev for a careful reading of a previous draft, which has greatly improved the exposition.

\printnotation

\section{Preliminaries}\label{prelims}
\subsection{Moduli spaces of quiver representations}

Let \glossary{$Q$,$V$,$E$,$\alpha$,$\beta$}$Q=(V,E)$ denote a connected quiver, with vertex set $V$, and directed edge set $E$.  For $e\in E$, let $\alpha=\alpha(e)$ and $\beta=\beta(e)$ denote the tail and head of $e$, respectively.  The subject of much study is the category $\Rep\, Q$, of representations of $Q$.  An \glossary{$\Rep\,Q$}object $X$ of $\Rep\,Q$ is an assignment of a finite dimensional vector space $X_v$ over $\CC$ to each $v\in V$, and a linear operator $X_e: X_{\alpha}\to X_{\beta}$ to each $e \in E$.  A morphism $\phi$ between $X$ and $Y$ is a collection of linear maps $X_v\to Y_v$, which satisfy $Y_e\circ\phi_\alpha=\phi_\beta\circ X_e$, for all $e\in E$.

Fix a function $\dd:V\to\ZZ_{\geq 0}, v\mapsto d_v$ (called a dimension vector), and consider the following affine variety and affine algebraic group, respectively:

\glossary{$\Mat_{\dd}(Q)$,$\mathbb{G}^\dd$}$$\Mat_{\dd}(Q):=\prod_{e\in E} \Mat(\CC^{d_\alpha},\CC^{d_\beta}), \,\,\, \mathbb{G}^\dd:=\prod_{v\in V} GL(\CC^{d_v}).$$
We let $\mathbb{G}^\dd$ act on $M\in\Mat_\dd(Q)$ by change of basis at each vertex,
$$(g.M)_e:= g_{\beta(e)} M_e g_{\alpha(e)}^{-1}.$$



Many important applications of the representation theory of quivers involve the doubled quiver \glossary{$\overline{Q}$,$\overline{E}$,$ E^\vee$}$\overline{Q}=(V,\overline{E}=E\cup E^\vee)$, built from $Q$ by adding an adjoint arrow \mbox{$\beta(e)\xrightarrow{e^\vee} \alpha(e)\in E^\vee$}, for each $e\in E$.  We have canonical isomorphisms,
$$ T^*\Mat(\CC^{d_\alpha},\CC^{d_\beta})\cong \Mat(\CC^{d_\alpha},\CC^{d_\beta}) \times \Mat(\CC^{d_\beta},\CC^{d_\alpha}),$$ with the standard symplectic pairing given by:
$$(M,N)=\tr (M_eN_{e^\vee}-N_eM_{e^\vee}).$$
Taken together, these give an identification $T^*\Mat_\dd Q\cong\Mat_\dd \overline{Q}$.  Clearly, $\mathbb{G}^\dd$ acts by symplectomorphisms; moreover, the action admits a moment map:
$$\glossary{$\mu$}\mu: \Mat_\dd(\overline{Q})\to \g^\dd,$$
$$M\mapsto\sum_{e\in E}[M_e,M_{e^\vee}],$$
where we set $\g^\dd:=\operatorname{Lie}(\mathbb{G}^\dd)$.  Thus we may construct the Hamiltonian reduction along $\mu^{-1}(0)$:
$$\mathcal{M}_\dd(Q):= \Mat_\dd(\overline{Q})\dS_{\mu,0} \mathbb{G}^\dd,$$
a Poisson affine algebraic variety.  That is, we first impose the condition on $M\in\Mat_\dd(\overline{Q})$ that:
\begin{equation}\label{ppa}\sum_{e\in E} [M_e, M_{e^\vee}] = 0,\end{equation}
and we then take the categorical quotient  of the subvariety of such $M$ by the action of $\mathbb{G}^\dd$.  On the level of coordinate functions, we have:
$$\cO(\mathcal{M}_\dd(Q)):=\left(\mathcal{A} \Big / \mathcal{A}\mu^\#(\g^\dd)\right)^{\mathbb{G}^\dd},$$
where $\mathcal{A}=\cO(T^*\Mat_\dd(Q))$.

\subsection{Deformed pre-projective algebras}
The \emph{preprojective algebra}, $\Pi_0(Q)$ of $Q$ \cite{GP}, is the quotient of the path algebra $\CC\overline{Q}$ by relation,
$$\sum_{e\in E} [e,e^\vee] =0,$$
corresponding to equation \eqref{ppa}.  The variety $\mathcal{M}^{\mathbf{0}}_\dd(Q)$ may thus be interpreted as a moduli space of $\dd$-dimensional semi-simple representations of $\Pi_0(Q)$.  More generally, given a vector $\lambda: V\to \CC$, we may construct the Hamiltonian reduction $\mathcal{M}_\dd^\lambda$ along $\mu^{-1}(\sum \lambda_v \id_v)$.  That is, we first impose the condition on $M\in \Mat_\dd(Q)$ that:
\begin{equation}\sum_{e\in E} [M_e,M_{e^{\vee}}] = \sum_{v}\lambda_v\id_v,\label{dppa}\end{equation}
and then take the categorical quotient of the subvariety of such $M$ by the action of $\mathbb{G}^\dd$.  We assume that $\lambda\cdot d=0$, as otherwise equation \eqref{dppa} implies that $\mathcal{M}_\dd^\lambda$ is empty. The \emph{deformed pre-projective algebras}, $\Pi_\lambda(Q)$, were constructed by Crawley-Boevey and Holland in \cite{C-BH}, and have since received wide attention. These algebras are quotients of the path algebra $\CC\overline{Q}$ by the relation
$$ \sum_{e\in E} [e,e^\vee] = \sum_{v\in V} \lambda_v \iota_v,$$
corresponding to equation \eqref{dppa}.  The variety $\mathcal{M}_\dd^\lambda$ may be interpreted as a moduli space of semi-simple representations of $\Pi_\lambda(Q)$.

In the present work, we will be concerned with certain flat non-commutative deformations of the variety $\mathcal{M}_\dd^\lambda(Q)$.  The flatness of our deformations depends, in turn, on the flatness of the classical moment map $\mu$.  Fortunately, there is a completely explicit criterion for the flatness of $\mu$, due to Crawley-Boevey.  Let $A$ denote the Cartan matrix associated to $Q$, and let $p:\ZZ^V\to\CC$ denote the function:
$$p(d):= 1 - \frac12(d,Ad) =  1+ \sum_{e\in E}d_{\alpha(e)}d_{\beta(e)} - \sum_{v\in V}d_v^2.$$
We have:
\begin{theorem}\cite{C-B1}\label{CBflat} The following are equivalent:
\begin{enumerate}
\item $\mu$ is a flat morphism of algebraic varieties.
\item $\mu^{-1}(0)$ has dimension $(d,d)-1+2p(d)$.
\item $p(d)\geq \sum_ip(r_i)$, for any decomposition $d=\sum_ir_i$ into positive roots $r_i$.
\end{enumerate}
\end{theorem}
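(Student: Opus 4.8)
This equivalence is due to Crawley-Boevey \cite{C-B1} and is invoked as a black box in what follows; here is the shape of the argument one would give. The implications $(1)\Leftrightarrow(2)$ are a formal consequence of the ``miracle flatness'' theorem together with a contracting torus action, whereas $(2)\Leftrightarrow(3)$ is the representation-theoretic core, amounting to a computation of the dimension of the representation variety of the preprojective algebra $\Pi_0(Q)$.

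\emph{Step 1: $(1)\Leftrightarrow(2)$.} The source $X:=\Mat_\dd(\overline{Q})=T^*\Mat_\dd(Q)$ is an affine space, hence Cohen--Macaulay, and since $\tr[M_e,M_{e^\vee}]=0$ the image of $\mu$ lies in the hyperplane $\g^\dd_0:=\{\theta\in\g^\dd:\sum_v\tr\theta_v=0\}$, which is smooth of dimension $(d,d)-1$; regard $\mu$ as a morphism $X\to\g^\dd_0$, whose expected fibre dimension $\dim X-\dim\g^\dd_0$ equals, by the definition of $p(d)$, exactly $(d,d)-1+2p(d)$. By Krull's height theorem every irreducible component of every nonempty fibre of $\mu$ has at least this dimension. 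Now introduce the $\CC^\times$-action that rescales only the arrows of $E^\vee$: it has nonnegative weights, so every limit $\lim_{t\to 0}t\cdot M$ exists, and $\mu$ intertwines it with the weight-one scaling of $\g^\dd_0$, so that $t\cdot\mu^{-1}(\theta)=\mu^{-1}(t\theta)$; upper semicontinuity of fibre dimension in the family $\{(t,M):\mu(M)=t\theta\}$ over $\CC$ then forces $\dim\mu^{-1}(0)\ge\dim\mu^{-1}(\theta)$ for all $\theta$, so that $\mu^{-1}(0)$ is a fibre of maximal dimension. If $\mu$ is flat, the dimension formula for flat morphisms gives $\dim\mu^{-1}(0)=\dim X-\dim\g^\dd_0=(d,d)-1+2p(d)$, which is $(2)$. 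Conversely, if that equality holds then every fibre of $\mu$, being squeezed between the Krull bound and the maximal value $\dim\mu^{-1}(0)$, is equidimensional of dimension exactly $(d,d)-1+2p(d)$, and miracle flatness --- Cohen--Macaulay source, regular target --- yields flatness. Hence $(1)\Leftrightarrow(2)$.

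\emph{Step 2: $(2)\Leftrightarrow(3)$.} Here $\mu^{-1}(0)=\Rep(\Pi_0(Q),d)$, and the plan is to compute its dimension by stratifying according to the isomorphism class of the semisimplification --- equivalently, by the fibres of $\mu^{-1}(0)\to\mathcal{M}_\dd^{\mathbf{0}}(Q)=\mu^{-1}(0)/\!/\mathbb{G}^\dd$, refined by representation type. The inputs are the basic representation theory of $\Pi_0(Q)$: (i) the dimension vectors of simple $\Pi_0(Q)$-modules are exactly the positive roots $\gamma$ with $p(\gamma)>\sum_i p(\delta_i)$ for every nontrivial decomposition $\gamma=\sum_i\delta_i$ into positive roots; (ii) $\Pi_0(Q)$ has global dimension at most two together with a $2$-Calabi--Yau homological duality, whence $\dim\operatorname{Ext}^1_{\Pi_0}(M,N)=\dim\Hom_{\Pi_0}(M,N)+\dim\Hom_{\Pi_0}(N,M)-(\underline{\dim}\,M,A\,\underline{\dim}\,N)$, and in particular the moduli space of simple modules of a given dimension vector $\gamma$ is smooth of dimension $2p(\gamma)$, so that the locus of such simples inside $\mu^{-1}(0)$ has dimension $(\gamma,\gamma)-1+2p(\gamma)$; and (iii) Lusztig's theorem that the variety of nilpotent $\Pi_0(Q)$-representations of dimension $d$ is of pure dimension $\sum_{e\in E}d_{\alpha(e)}d_{\beta(e)}$. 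Using (i)--(iii), one bounds the dimension of the stratum attached to each way of presenting $d$ as a sum of (possibly repeated) dimension vectors of simples: it is built from a symmetric product of the relevant moduli of simples, a $\mathbb{G}^\dd$-orbit whose dimension is governed by the automorphism group of a generic semisimple representation of the given type, and a local model --- supplied by an \'etale slice --- for the non-semisimple representations with the prescribed semisimplification, which is again a quiver variety, attached to the Ext-quiver of $\Pi_0(Q)$ at that semisimple point. Maximising the resulting dimensions over all decompositions of $d$ into positive roots shows that $\dim\mu^{-1}(0)$ equals $(d,d)-1+2p(d)$ precisely when $p(d)\ge\sum_i p(r_i)$ for every such decomposition, and strictly exceeds $(d,d)-1+2p(d)$ otherwise; this is $(2)\Leftrightarrow(3)$.

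\emph{Main obstacle.} Step 1 is essentially formal, and the whole difficulty lies in Step 2. Establishing (i) --- the classification of dimension vectors of simple modules over the preprojective algebra --- is itself a substantial theorem, proved by a delicate induction on $d$ in which reflection functors play a central role. And organising the stratum-by-stratum dimension count is intricate: the crucial points are to control the loci of representations with a fixed but non-semisimple semisimplification via the \'etale slices, to obtain the right multiplicity corrections for repeated summands, and to verify that $\mu^{-1}(0)$ acquires no irreducible components beyond those exhibited by the stratification. This analysis is the substance of \cite{C-B1}.
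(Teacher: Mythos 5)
The paper does not prove Theorem \ref{CBflat}; it is imported directly from Crawley-Boevey \cite{C-B1} as a black box, so there is no in-paper argument to compare against. Your sketch is nonetheless a faithful account of the shape of the argument in \cite{C-B1}. Step 1 --- the Krull lower bound, the contracting $\CC^\times$-action rescaling $E^\vee$ which makes $\mu^{-1}(0)$ the fibre of maximal dimension, and miracle flatness with Cohen--Macaulay source and regular target the trace-zero subspace of $\g^\dd$ --- is correct and essentially complete for $(1)\Leftrightarrow(2)$. Step 2 names the right ingredients for $(2)\Leftrightarrow(3)$: stratifying $\Rep(\Pi_0(Q),d)$ by representation type, the characterisation of dimension vectors of simple $\Pi_0$-modules, the preprojective Euler-form identity, Lusztig's Lagrangian dimension bound, and Luna-slice local models. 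As you yourself point out, the real content lies in your item (i) and in the stratum-by-stratum dimension maximisation, both of which require the full reflection-functor induction of \cite{C-B1}; your outline does not reproduce those, so it should be read as a motivated citation rather than a proof --- which is exactly how the paper uses the result.
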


Moreover, if it happens that $d$ satisfies the strict inequality in (3) for all possible non-trivial decompositions $d=\sum_ir_i$ into positive roots $r_i$, then it is shown in \cite{C-BEG}, Theorem 11.3.1, that the fibers, $\mu^{-1}(\sum_{v}\lambda_v\id_v)$, are all reduced and irreducible complete intersections.  In this case, $\mathcal{M}_\dd^\lambda$ coincides with its smooth resolution for generic $\lambda$, and in particular, both are actually affine.

For Dynkin quivers $Q$, Theorem \ref{CBflat} asserts that $\mu$ is flat if and only if $\dd$ is a positive root for $Q$; in this case the classical Hamiltonian reduction is zero-dimensional, so these are not interesting examples from the point of view of deformation theory.

For affine Dynkin quivers $Q$, let $\delta$ denote the positive generator of the imaginary root lattice.  In this case, Theorem \ref{CBflat} asserts that $\mu$ is flat in one of two cases:  when $\dd=r_i + \delta$, for a root $r_i$ of the ordinary Dynkin quiver associated to $Q$, or when $\dd=\delta$.  In the former case, the classical Hamiltonian reduction is zero-dimensional, while in the latter case it is two-dimensional, and gives the Kleinian singularity associated with $Q$.

The most interesting examples come from quivers $Q$, which are neither of Dynkin nor affine-Dynkin type.  For such quivers, it is shown in \cite{C-BEG}, Lemma 11.3.3, that the strict version of condition (3) above is satisfied by a Zariski-dense set $\Sigma_0$ of dimension vectors $\dd\in \mathbb{Z}_{\geq 0}^V$.  Thus, such $Q$ produce a rich family of examples of flat Hamiltonian reductions of positive dimension.  Of particular interest are the so-called ``Calogero-Moser" quivers obtained by adding a ``base" vertex $\tilde{v}$ to an affine Dynkin quiver, whose unique edge connects it to the extending vertex.  In this case, the dimension vector $n\delta + \tilde{v}$ satisfies the strict version of condition (3) in Theorem \ref{CBflat} for any $n \geq 0$.

\subsection{Multiplicative deformed pre-projective algebras}
The deformed pre-projective algebra admits a multiplicative deformation, which may be described as follows.  Extend $e\mapsto e^\vee$ to an involution on $\overline{E}$, by setting $e^{\vee\vee}:=e$, and define $\epsilon(e)=1$, if $e\in E$, $-1$ else.  We choose an ordering on the edges $e\in \overline{E}$, and a function $\xi:V\to\CC^\times$.  First, we restrict our attention to the set $T^*\Mat_\dd(Q)^\circ$ of $X\in T^*\Mat_\dd(Q)\cong\Mat_\dd(\overline{Q})$ such that, for each $e\in \overline{E}$, the matrices $(\id_\alpha + X_{e^\vee}X_e)$ are invertible.  Further, we impose the following restriction, which is a multiplicative version of equation \eqref{dppa}\footnote{here, $\overrightarrow{\prod}$ denotes ordered product; see Section \ref{sec:QuivNot}.}:
\begin{equation}\label{mdppa}\overrightarrow{\prod_{e\in \overline{E}}} (\id_\alpha + M_{e^\vee}M_e)^{\epsilon(e)}= \sum \xi_v\id_v.\end{equation}
Taking the categorical quotient by the action of $\mathbb{G}^\dd$, we obtain the space
$\widetilde{\mathcal{M}}_\dd^\xi$, which again has an interpretation as moduli of $\dd$-dimensional semi-simple representations for a certain localization of $\CC\overline{Q}$, known as the multiplicative deformed pre-projective algebra.  As has been noted in \cite{C-BS} and \cite{VdB1,VdB2}, the variety $\widehat{\mathcal{M}}_\dd^\xi$ is in fact an instance of quasi-Hamiltonian reduction along a ``group-valued'' moment map,\glossary{$\widetilde{\mu}$}
$$\widetilde{\mu}:T^*\Mat_\dd(Q)^\circ\to\mathbb{G}^\dd$$
$$ X \mapsto \overrightarrow{\prod_{e\in E}}(\id_\alpha + X_{e^\vee}X_e)^{\epsilon(e)},$$
with respect to the central element $(\xi_v\id_v)_{v\in V}\in\mathbb{G}^\dd$.  Quasi-Hamiltonian reduction is a multiplicative analog of Hamiltonian reduction, as defined by Alekseev and Kosmann-Schwarzbach in \cite{AK-S}.  See also \cite{AMM}, \cite{AK-SM}, for foundational development of quasi-Poisson geometry, and group-valued moment maps.

\subsection{Quantized quiver varieties}
Finally, there is a quantization of the variety $\mathcal{M}_\dd^\lambda$ \cite{GG2}, which involves replacing the cotangent bundle to $\Mat_\dd(Q)$ with its quantization, the algebra $\cD(\Mat_\dd(Q))$ of differential operators.  The algebra $\cD(\Mat_\dd(Q))$ quantizes the symplectic form on $T^*\Mat_\dd(Q)$, and one constructs its quantum Hamiltonian reduction $\widehat{\Gamma}(\mathcal{M}_\dd^\lambda(Q), \cO_{\mathcal{M}^\lambda_\dd(Q)})$ along the homomorphism,\glossary{$\widehat{\mu}^\#$}
$$\widehat{\mu}^\#:U(\g^\dd)\to \cD(\Mat_\dd(Q)),$$
$$ X\in \g \mapsto L_X,$$
where $L_X$ is the vector field generated by the action of $X$.  The reduction is taken with repsect to a character $\lambda:U(\g^\dd)\to\CC$.  For an exposition of quantum Hamiltonian reduction, see \cite{L}.  For some applications, see \cite{E1}, \cite{Mo}.  \edit{We hope to explain these constructions in an appendix to a future version of this paper.}

\subsection{The multiplicative Deligne-Simpson problem}

The applications of quiver varieties and (multiplicative, deformed) pre-projective algebras to diverse areas of mathematics are too many to list here; as such we mention only one important application, due to Crawley-Boevey and Shaw \cite{C-BS}.  Given conjugacy classes $C_1,\ldots,C_n\subset GL(V)$, the Deligne-Simpson problem asks when there exists an irreducible local system on $\mathbb{P}^1\backslash\{p_1,\ldots p_n\}$ with monodromy around each $p_i$ given by a matrix $A_i\in C_i$.  Thus, the Deligne-Simpson problem concerns the classification of $n$-tuples of matrices $A_i\in C_i$ without common fixed subspace, satisfying:
$$A_1\cdots A_n=\id,$$
up to simultaneous conjugation of the $A_i$.

Crawley-Boevey and Shaw were able to answer this question rather concretely in terms of the root data of a certain star-shaped quiver $Q$, which encodes the conjugacy classes $C_i$.  They determine for which $Q$, with the relevant dimension vector $\dd$, the variety $\widehat{\mathcal{M}_\dd^q}$ is non-empty and, in this case, what is its dimension.  Still, the finer geometry of these varieties is not completely well-understood.  The connection between multiplicative quiver varieties and fundamental groups of Riemann surfaces is a major motivation for the present work.

In particular, there is a well-known symplectic structure on the space of bundles with flat connections on a compact, closed oriented two-manifold with boundary of genus $g$.  A quantization of this symplectic structure has been considered in \cite{FR}, and constructed in \cite{RS}; our results provide another construction, and a generalization to arbitrary quivers.

\section{Construction of $\cO_q(\Mat_\dd(Q))$ and $\cD_q(\Mat_\dd(Q))$}\label{construction}
\subsection{Discussion}\label{discussion}
The constructions in this section are phrased in the language of braided tensor categories, while all that is essential for our primary example is a vector space $V$, the tensor flip $
\tau:v\ot w\mapsto w\ot v$, and a Hecke R-matrix, $R:V\ot V\to V\ot V$, satisfying the ``quantum Yang Baxter" equation (QYBE), \glossary{QYBE, Hecke reln.}
$$\tau_{12}R_{12}\tau_{23}R_{23}\tau_{12}R_{12} = \tau_{23}R_{23}\tau_{12}R_{12}\tau_{23}R_{23}:V\ot V\ot V \to V\ot V\ot V,$$
and the quadratic ``Hecke" relations:
$$\tau\circ R - R^{-1}\circ \tau = (q-q^{-1})\id\ot\id.$$
There are nevertheless several practical reasons for adopting the tensor categorical formalism over the more concrete data of Hecke $R$-matrices.

First, when deforming algebras with geometrical significance, it is often not clear at the outset precisely how to proceed: the set of ``bad" definitions is open dense in the space of all possible definitions.  That is, given only the goal of producing some new algebra with similar generators and relations, which ``degenerates" to the classical algebra when $q\to 1$, there is far too much flexibility, and many pathologies can arise (as regards flatness, zero-divisors, localizations, etc.).  However, in the present work, we require that our algebras $\cD_q(\Mat_\dd(Q))$ enjoy the following properties:
\begin{enumerate}
\item $\cD_q(\Mat_\dd(Q))$ is a algebraically flat deformation of $\cD(\Mat_\dd(Q))$.  This means we exhibit an explicit PBW-basis for $\cD_q(\Mat_\dd(Q))$ specializing to the standard monomials when $q=1$.  This condition is much stronger than being formally flat.
\item  $\cD_q(\Mat_\dd(Q))$ carries an action of the quantum group $U_q(\g^\dd)$, which quantizes $\mathbb{G}^\dd$:\glossary{$U_q(\g^\dd)$}
$$U_q(\g^\dd):=\bigotimes_{v\in V} U_q(\mathfrak{gl}_{d_v}).$$
\item There exists a ``quantum moment map" $\mu_q^\#$, simultaneously quantizing and $q$-deforming the classical moment map $\mu$.\end{enumerate}
Requirements (1) and (2) suggest that the algebra $\cD_q(\Mat_\dd(Q))$ necessarily is an algebra in the braided tensor category $\cC=U_q(\g^\dd)$-lfmod of locally finite modules for $U_q(\g^\dd)$.  This drastically restricts which sorts of algebras we may consider, namely to those whose generators and relations express as the image of morphisms in the braided tensor category $\cC$.

Secondly, in condition (3), we require the moment map itself to be equivariant for the quantum group, which means that it is a homomorphism of algebras in $\cC$.  Since we wish the construction to be uniform for different dimension vectors $\dd$, it is natural to allow ourselves only the axioms of a braided tensor category, together with the Hecke relation on the braiding.  This turns out to be a useful restriction, as it narrows our focus sufficiently such that the ``right" definitions are essentially the only ones we are able to write down.

A third practical benefit from working with braided tensor categories has already surfaced in \cite{J}, \cite{JM}, where we studied interplay between certain algebraic constructions in Lie theory and geometry of spaces of configurations of points on Riemann surfaces.  These constructions are greatly clarified by the use of braided tensor categorical language and quantum groups, in the same way that the language of braided tensor categories clarifies the connections between quantum groups and knot invariants.

In addition to the practical motivations above, there are two more substantive motivations for working with braided tensor categories.  The first is that there are more braided tensor categories besides $U_q(\g^\dd)$-lfmod that we can associate to $Q$.  Two particularly tantalizing examples are:
\begin{enumerate}
\item Fusion categories associated to quantum groups at roots of unity
\item Deligne's categories $U_q(\g_\nu)$-mod, where $\nu:V\to\CC$ has as values arbitrary complex numbers, rather than positive integers.
\end{enumerate}
We hope that the methods of this paper will go through in these settings more or less intact, which would open the door for connections to modular categories and invariants of links and knots on higher genus surfaces.  The second motivation is related to the notion of a quasi-symmetric tensor category, which is a braided tensor category over $\CC[[\hbar]]$ such that the braiding satisfies:
$$\sigma_{W,V}\sigma_{V,W}= \id_{V\ot W} \mod \hbar.$$
It is well-known how to degenerate such categories into symmetric tensor categories.  In case $\cC$ is the representation category of a quantum group $U_q(\mathfrak{g})$, the first-order term in $\hbar$ often carries some interesting data for Lie theory: for instance, the first non-trivial term of $\sigma_{W,V}\sigma_{V,W}$ is essentially the Casimir operator $\Omega\in\operatorname{Sym}^2(\g)^\g$, while the first non-trivial term of the associator is the unique invariant alternating 3-form, $\phi\in\Lambda^3(\g)^\g$.

As an application of these ideas one can recover the axioms of quasi-Poisson geometry as first-order degenerations of the axioms for algebras in braided tensor categories.  It is our hope that the axioms of ``group-valued moment maps" can also be obtained as degenerations of the notion of quantum Hamiltonian reduction.

\subsection{Reminders on braided tensor categories}
In this section, we recall some basic constructions involving braided tensor categories, in order to fix notations.  As such, we do not discuss all details, but only those we will use explicitly.  For clarity's sake, we supress instances of the associativity and unit isomorphisms in definitions and commutative diagrams, as they can be inserted uniquely, if necessary.

Recall that a tensor category is a $\CC$-linear abelian category $\cD$, together with a biadditive functor, $$\ot: \cD\times \cD\to \cD,$$
linear on $\Hom$'s, together with a unit $\mathbf{1}\in\cC$, associativity isomorphism $\alpha$, and unit isomorphisms.  These are required to satisfy a well-known list of axioms, which we do not recall here.  A tensor functor $F=(F,J)$ between tensor categories $\cD_1$ and $\cD_2$ is an exact functor $F:\cD_1\to\cD_2$ of underlying abelian categories, together with a functorial isomorphism,
$$J:F(-)\ot F(-)\to F(-\ot -),$$
respecting units and associators in the appropriate sense.  The opposite tensor category $\cD^{\vee}$ is the same underlying abelian category, with tensor product \mbox{$V\ot^{op} W:=W\ot V$}, and associator $\alpha^{-1}$.  A braided tensor category is a tensor category $\cD$, together with a natural isomorphism $\sigma:\ot\to\ot^{op},$ satisfying the so-called hexagon relations.

\subsubsection{Deligne's external product of abelian categories}\label{extprod}
Recall that a $\CC$-linear abelian category $\cD$ is called \emph{locally finite}, if all $\Hom$ spaces are finite dimensional, and every object $V\in\cD$ has finite length.  \glossary{$\bt$}We use the symbol $\bt$ to denote Deligne's tensor product of locally finite categories (see, e.g. 
\cite{EGNO}).  In this article, we will consider semisimple abelian categories; in this case, the external tensor product $\cD_1\bt\cD_2$ of $\cD_1$ and $\cD_2$ is just a semisimple abelian category with simple objects $X\bt Y$, where $X$ and $Y$ are simples in $\cD_1$, $\cD_2$.  External tensor products may be defined for non-semisimple categories - this will be needed when considering $q$ to be a root of unity - but we will not need them here.

\begin{example}
Let $A$ be a (possibly infinite-dimensional) $\CC$-algebra.  Then the category $A$-fmod of finite dimensional $A$-modules is a locally finite $\CC$-linear abelian category.  For two such algebras $A$ and $B$, we have a natural equivalence,
$$A\textrm{-fmod}\bt B\textrm{-fmod} \sim (A\ot B)\textrm{-fmod}.$$
In all our examples the external tensor products of categories we consider are of this sort.
\end{example}

The Deligne tensor product $\cD_1\bt\cdots\bt\cD_n$ of (braided) locally finite tensor categories is again a (braided) locally finite tensor category, with structure functors defined diagonally: we set  $\ot:=\ot_1\bt\cdots\bt\ot_n$ (and $\sigma:=\sigma_1\bt\cdots\bt\sigma_n$).

\subsection{Primary objects}
In this section we construct the algebras $\cO_q(\Mat_\dd(Q))$ and $\cD_q(\Mat_\dd(Q))$ as algebras in a braided tensor category $\cC$ associated to $Q$.
\subsubsection{Quiver notation}\label{sec:QuivNot}
We resume the notation for quivers from the Introduction.  We choose, once and for all, an ordering on $\overline{E}=E\cup E^{\vee}$: we will emphasize dependence on this ordering in later definitions with an over-arrow decoration, e.g \glossary{$\overrightarrow{\otimes}, \overrightarrow{\prod}$}$\overrightarrow{\otimes}, \overrightarrow{\prod}$.  For $v\in V$, we define \glossary{$E^\alpha_v$,$E^\beta_v$, $E_v^\circ$}$E^\alpha_v$ and $E^\beta_v$ as the subsets of non-loop edges $e\in E$ such that $\alpha(e)=v$ or $\beta(e)=v$, respectively; we define $E_v^\circ$ as the subset of self-loops based at $v$.  Each obtains an induced ordering from $E$.

For each $v\in V$, we fix a locally finite braided tensor category $\cC_v$, and a distinguished object $W_v\in\cC_v$.  This data we encode in the function $\dd$, by defining $\dd(v):=(W_v,\cC_v)$.  Thus, $\dd$ is a generalized dimension vector, specifying which object is associated to each vertex.

\begin{definition}
We let $\cC:=\bigboxtimes{v\in V}\cC_v$, with tensor product and braiding defined diagonally.  We regard any object $X_v\in \cC_v$ as an object in $\cC$ by putting the tensor unit $\mathbf{1}_w:=\mathbf{1}_{\cC_w}$ in the omitted tensor components. Strictly speaking, $\cC$ depends on an implicit choice of ordering on $V$; however the categories associated to different orderings are canonically equivalent by the obvious functors of transposition of factors; it is the ordering on edges which is more significant in these constructions.
\end{definition}

\subsubsection{Defining relations}
The defining relations for the algebras $\cO_q(\Mat_\dd(Q))$ and $\cD_q(\Mat_\dd(Q))$ are most naturally expressed as the image of certain canonical morphisms built from the braiding.  We define those morphisms here for use later.  For $e\in \overline{E}$, we let $\operatorname{Mat}(e):=W_\alpha^*\ot W_\beta \in \cC$\glossary{$\Mat(e)$}.  Choose a parameter $t\in\CC$.
\begin{definition} 
For $e\in E$ with $\alpha(e)\neq\beta(e)$, we define:
$$R(e):  W_\alpha^*\ot W_\alpha^*\bt W_\beta\ot W_\beta \to \operatorname{Mat}(e)\ot  \operatorname{Mat}(e),$$
$$R(e):=\sigma_{W_\alpha^*,W_\alpha^*} - \sigma_{W_\beta,W_\beta}.$$
$${S}(e,e^\vee):W_\alpha^*\ot W_\alpha \bt W_\beta^* \ot W_\beta\to \Mat(e^\vee)\ot\Mat(e)\oplus \Mat(e)\ot \Mat(e^\vee) \oplus \CC,$$
$$S(e,e^\vee):= \sigma_{W_\alpha^*,W_\alpha} - \sigma^{-1}_{W_\beta,W_\beta^*} - t\cdot(\ev_{W_\alpha}\bt\ev_{W_\beta}),$$
where $\alpha=\alpha(e)=\beta(e^\vee)$, $\beta=\beta(e)=\alpha(e^\vee)$.\glossary{$t$}
\end{definition}
\begin{definition} For $e\in E$ with $\alpha(e)=\beta(e)$, we define:
$$R(e):  W_\alpha^*\ot W_\alpha^*\ot W_\alpha\ot W_\alpha \to \operatorname{Mat}(e)\ot  \operatorname{Mat}(e),$$
$$R(e):=\sigma_{W_\alpha,W_\alpha^*}^{-1}\circ(\sigma_{W_\alpha^*,W_\alpha^*} - \sigma_{W_\alpha,W_\alpha}).$$
$$S(e,e^\vee):W_\alpha^*\ot W_\alpha^* \ot W_\alpha \ot W_\alpha\to \Mat(e)\ot \Mat(e^\vee) \oplus \Mat(e^\vee)\ot\Mat(e),$$
$$S(e,e^\vee):= \sigma^{-1}_{W_\alpha,W_\alpha^*}(\sigma_{W_\alpha,W_\alpha} - \sigma^{-1}_{W_\alpha^*,W_\alpha^*}).$$
\end{definition}

Recall that, for any object $X$ in a tensor category $\cD$, the tensor algebra,
$$T(X):=\bigoplus_{k=0}^{\infty} X^{\ot k}$$ is an naturally an algebra in $\cD$.  Given $Y\subset T(X)$, we let $\langle Y \rangle$ denote the two-sided ideal in $T(X)$ generated by $Y$.

\begin{definition} For $e\in E$, we define the two-sided ideals:\glossary{$I(e),I(e,e^\vee)$}
$$I(e):=\langle\operatorname{Im}R(e)\rangle \subset T(\Mat(e)),\,\, \textrm{and}$$ $$I(e,e^\vee):=\langle\operatorname{Im}S(e,e^\vee)\rangle\subset T(\Mat(e)\oplus\Mat(e^\vee)).$$
\end{definition}

\subsubsection{The braided coordinate and differential operator algebras of $Q$}
\begin{definition} The edge coordinate algebra $\cO_q(e)$\glossary{$\cO_q(e)$} is the quotient of the tensor algebra
$T(\Mat(e))$ by its two-sided quadratic ideal $I(e)$.
\end{definition}

\begin{definition} \label{bqca}The braided quiver coordinate algebra $\cO_q=\cO_q(\Mat_\dd(Q))$\glossary{$\cO_q,\cO_q(\Mat_\dd(Q))$} is the braided tensor product of algebras,
$$\cO_q(\Mat_\dd(Q)):= \overrightarrow{\bigotimes_{e\in E}} \cO_q(e).$$
\end{definition}

\begin{definition} The edge differential operator algebra $\cD_q(e)$\glossary{$\cD_q(e)$} is the quotient of the tensor algebra $T(\operatorname{Mat}(e)\oplus \operatorname{Mat}(e^\vee))$ by its two sided quadratic ideal $$\mathcal{I}:=I(e) + I(e^\vee) + I(e,e^\vee).$$
\end{definition}
\begin{definition}\label{bqa} The braided differential operator algebra $\cD_q(\Mat_\dd(Q))$\glossary{ $\cD_q(\Mat_\dd(Q))$} is the braided tensor product of algebras,
$$\cD_q(\Mat_\dd(Q)):= \overrightarrow{\bigotimes_{e\in E}} \cD_q(e).$$
\end{definition}

\begin{remark} Recall that for the tensor product of algebras in a braided tensor category the component subalgebras do not commute trivially; rather, they commute by the braiding:
$$\mu_{A\ot B}:=(\mu_A\ot\mu_B)\circ \sigma_{B,A}:A\ot B\ot A \ot B \to A\ot B.$$ 
Note, however, that edge algebras do commute trivially if they share no common vertex, since in this case they occupy distinct $\bt$-components of $\cC$.  However, we have an isomorphism $A\ot B\to B\ot A$ of $\cC$-algebras given by $\sigma_{B,A}^{-1}$; thus $\cO_q$ and $\cD_q$ are defined independently of the ordering of $v\in E$, up to isomorphism.
\end{remark}

\begin{remark}\label{teq1} The dependence on the parameter $t$\glossary{$t=1$ convention} appearing in the definition of $\cD_q(\Mat_d(Q))$ is inessential in the following sense:  for $t_1,t_2\neq 0$, the two algebras obtained by using $t_1$ or $t_2$ are isomorphic, by a simple rescaling of the generators (this phenomenon is common in the undeformed setting as well).  Thus, to ease notation, we set $t=1$, for the remainder of the paper.  The exception, however comes when we compute degenerations in Section \ref{sDAHA}, when we will work with formal power series in $\hbar$, and will set $t$ to $\hbar$, to compute the degeneration.\end{remark}

\section{Quantum groups and an RTT-type presentation for $\cO_q$ and $\cD_q$}\label{RTTpres}

In this section, we unfold the definitions of $\cO_q$ and $\cD_q$ in our primary examples of interest, namely those coming from $\cC_v=U_q(\mathfrak{gl}_{d_v})$-lfmod, so that $\cC=U_q(\g^\dd)$-lfmod.  We will see that for certain simple quivers $Q$, $\cO_q$ and $\cD_q$ are related to well-known constructions in the theory of quantum groups.
To begin, let us recall the quasi-triangular Hopf algebra $U_q(\mathfrak{gl}_N)$.  The discussion here has been adapted from \cite{JM}, \cite{KS}, where the relation to the Serre presentation is explained.

\subsection{The $R$-matrix on $\CC^N$}
We fix, for the remainder of this article, the following endomorphism $R$, of $\CC^N\ot\CC^N$:

\begin{equation}\label{eqn:R}\glossary{$R,R^{ij}_{kl}$}
R:= \tq\sum_{i}E_i^i\otimes E_i^i
+\sum_{i\neq j}E_i^i\otimes E_j^j+
(\tq-\tq^{-1})\sum_{i>j}E_i^j\otimes E_j^i.
\end{equation}
We note that $R$ satisfies the QYBE, and the Hecke condition from Section \ref{discussion}.  We define $R^{kl}_{ij},(R^{-1})^{kl}_{ij}\in\CC$, for $i,j,k,l=1,\ldots,N$ by:
\begin{eqnarray*}
R(e_{i}\otimes e_{j})
=\sum_{k,l}R_{ij}^{kl}(e_{k}\otimes e_{l}),\quad
R^{-1}(e_{i}\otimes e_{j})
=\sum_{k,l}(R^{-1})_{ij}^{kl}(e_{k}\otimes e_{l}).
\end{eqnarray*}
We have:
\begin{align*}
R^{ij}_{kl} &= q^{\delta^i_j}\delta^i_k\delta^j_l + (q-q^{-1})\theta(i-j)\delta^i_l\delta^j_k,\\
(R^{-1})^{ij}_{kl}&= q^{-\delta^i_j}\delta^i_k\delta^j_l - (q-q^{-1})\theta(i-j)\delta^i_l\delta^j_k,
\end{align*}
 where $\delta^i_j=1$ if $i=j, 0$ else, and $\theta(k)=1$ if $k>0, 0$ else.

\subsection{The Drinfeld-Jimbo quantum group $U_{q}(\mathfrak{gl}_N)$}\label{sec:QG}
Let $\widetilde{U}$ denote the free algebra with generators $l^{+i}_j$, and $l^{-k}_l$, where $i,j,k,l=1\ldots N$.  We organize the generators into matrices $L^+, L^-\in \Mat_N(\widetilde{U})\cong \widetilde{U}\ot \Mat_N(\CC)$:

$$L^+=\sum_{i,j} l^{+i}_j\ot E^j_i, \quad L^-=\sum_{k,l}l^{-k}_l\ot E^l_k.$$
For $M\in\Mat_N(\tilde{U})$, we define:
$$M_1:=M\ot\id,\,\, M_2=\id\ot M \in \Mat_N(\tilde{U})\ot\Mat_N(\tilde{U}).$$

\begin{definition} The Drinfeld-Jimbo quantum group $U_q(\mathfrak{gl}_N)$ is the quotient of $\widetilde{U}$ by the relations: 
\begin{align}{\label{l-rel1}}
&L_{1}^{\pm}L_{2}^{\pm}R=RL_{2}^{\pm}L_{1}^{\pm}, \quad
L_{1}^{-}L_{2}^{+}R=RL_{2}^{+}L_{1}^{-},\\
{\label{l-rel2}}
&l_{i}^{+i}l_{i}^{-i}=l_{i}^{-i}l_{i}^{+i}=1, \quad i=1, \ldots, N,\\
{\label{l-rel3}}
&l^{+i}_j=l^{-j}_i=0, \quad i>j.
\end{align}
$U$ is a Hopf algebra with the antipode $S$, coproduct $\Delta$ and counit $\epsilon$ given by:
\begin{align*}
S(L^{\pm})=(L^{\pm})^{-1},\quad \Delta(l_{j}^{\pm i})=\sum_{k}l_{k}^{\pm i}\otimes l_{j}^{\pm k},\quad
\text{ and }\quad \epsilon(l_{j}^{\pm i})=\delta_{j}^i.
\end{align*}
\end{definition}

\begin{remark}  Each of the relations in line \eqref{l-rel1} above is actually an $N^2\times N^2$ matrix of relations.  For instance equation \eqref{l-rel1} asserts, for all $i,j,m,n\in 1\cdots N$, the relations:
$$\sum_{k,l} l^{+i}_kl^{+j}_lR^{kl}_{mn} = \sum_{o,p} R^{ij}_{op}l^{+p}_ml^{+o}_n.$$
We shall use such notation frequently in this and future sections without further comment.
\end{remark}

\begin{definition} The vector representation $\rho: U \to \operatorname{End}(\mathbb{C}^{N})$ is defined on generators by:
\begin{align*}
\rho_{V}(l_{j}^{+i})=\sum_{\alpha,\beta}R_{\beta j}^{\alpha i}E_\alpha^\beta, \quad
\rho_{V}(l_{j}^{-i})=\sum_{\alpha,\beta}(R^{-1})^{i\alpha}_{j\beta}E_\alpha^\beta.
\end{align*}
\end{definition}

\subsection{The locally finite part $U'$ of $U$}
The Hopf algebra $U$ acts on itself via the adjoint action: \glossary{$\rhd$}
$$x \rhd y := x_{(1)}yS(x_{(2)}).$$
\begin{definition}\emph{The locally finite subalgebra} $U'$ is the subalgebra of $U$ of vectors which generate a finite dimensional vector space under the adjoint action.\end{definition}
We will make use of the following explicit description of $U'$.  We define $\widetilde{l}^i_j\in U$ by $\widetilde{l}^i_{j} := \sum_k l^{+i}_{k}S(l^{-k}_{j})$.  We define $\widetilde{L}:=L^+S(L^-)$, so that \mbox{$\widetilde{L} = \sum_{i,j} \widetilde{l}^i_jE^j_i$}.\glossary{$\widetilde{L}$, $\widetilde{l}^i_j$}

\begin{theorem}(see \cite{KS})\label{lfdesc}
\begin{enumerate}
\item $U'$ is generated by the $\widetilde{l}^i_{j}$, and the inverse $\operatorname{det}_q^{-1}=l^{-1}_{1}\ldots l^{-N}_{N}$ of the $q$-determinant.
\item $U'$ is a left co-ideal: we have $\Delta(U')\subset U\ot U'$.  The coproduct on $U'$ is given by:
$$\Delta(\widetilde{l}^i_{j})=\sum_{s,t} l^{+i}_{s}S(l^{-t}_{k})\otimes \widetilde{l}^s_{t},\,\,\,\Delta(\operatorname{det}^{-1}_q)=\operatorname{det}^{-1}_q\otimes \operatorname{det}^{-1}_q$$
\end{enumerate}
\end{theorem}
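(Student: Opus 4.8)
The statement to prove is Theorem~\ref{lfdesc}, the explicit description of the locally finite subalgebra $U'$ of $U = U_q(\mathfrak{gl}_N)$. This is attributed to \cite{KS}, so the "proof" is really an exposition of a known result; my plan is to recall the structure of the argument rather than reconstruct it from scratch.

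\medskip

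\textbf{Plan.} The natural approach has two independent halves: (i) showing that $\widetilde{L} = L^+S(L^-)$ has matrix entries which are locally finite, and (ii) showing that these entries together with $\operatorname{det}_q^{-1}$ generate all of $U'$. For part (i), the key computational input is the commutation relation between $\widetilde{L}$ and the matrices $L^\pm$, derived from the $RLL$-relations \eqref{l-rel1}: a standard manipulation yields reflection-equation-type relations of the form $R_{21}\widetilde{L}_1 R_{12}\widetilde{L}_2 = \widetilde{L}_2 R_{21}\widetilde{L}_1 R_{12}$, which exhibit the span of the $\widetilde{l}^i_j$ (together with $1$) as a subcoalgebra stable under left multiplication by the generators. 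Combined with the coproduct formula $\Delta(\widetilde{l}^i_j) = \sum_{s,t} l^{+i}_s S(l^{-t}_j)\otimes \widetilde{l}^s_t$ --- which one checks directly from $\Delta(L^\pm)$ and the antipode axiom --- one sees that the adjoint action $x\rhd \widetilde{l}^i_j = x_{(1)}\widetilde{l}^i_j S(x_{(2)})$ lands in the finite-dimensional span of $\{\widetilde{l}^s_t\}$; hence $\widetilde{l}^i_j\in U'$, and since $\operatorname{det}_q$ is ad-invariant (being grouplike-like under the adjoint action: $x\rhd\operatorname{det}_q = \epsilon(x)\operatorname{det}_q$), $\operatorname{det}_q^{-1}\in U'$ as well. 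The co-ideal property $\Delta(U')\subset U\otimes U'$ is then immediate from the displayed coproduct formulas, since every term has its second tensor factor among the $\widetilde{l}^s_t$ or equal to $\operatorname{det}_q^{-1}$.

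\medskip

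\textbf{The harder direction} is (ii): that the subalgebra $U''$ generated by the $\widetilde{l}^i_j$ and $\operatorname{det}_q^{-1}$ exhausts $U'$. The strategy here is a dimension/character count. Using the triangular-type decomposition and the quantum Iwasawa or Peter--Weyl philosophy, one knows that $U'$, as a $U$-module under the adjoint action, decomposes as $\bigoplus_{\lambda} V_\lambda \otimes V_\lambda^*$-type pieces indexed by the dominant weights $\lambda$ lying in the root lattice shifted appropriately --- concretely, $U' \cong \bigoplus_\mu \operatorname{End}(V_\mu)$ ranging over $\mu$ in the appropriate sublattice (this is the content of the "Rosso/Joseph--Letzter" description, cf.\ \cite{JL}). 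One then shows $U''$ already contains one highest-weight vector (equivalently, one lowest-weight vector) in each such isotypic component: the entries $\widetilde{l}^i_j$ give the $\mu = $ (first fundamental + its dual) component, and taking products and using $\operatorname{det}_q^{\pm 1}$ to shift by multiples of the determinant weight, one generates highest-weight vectors of all the required weights $\mu$. Since each isotypic component of $U'$ under the adjoint action is generated as a $U$-module by a single highest-weight vector, and $U''$ is ad-stable and contains such a vector for every $\mu$, we get $U'' \supseteq U'$; the reverse inclusion is (i). This is where the bulk of the work lies, and the main obstacle is the combinatorics of showing that the products of the $\widetilde{l}^i_j$ and $\operatorname{det}_q^{-1}$ indeed hit the highest-weight line of every isotypic block --- this is essentially a $q$-analog of the classical statement that $\mathfrak{gl}_N^*$ generates $S(\mathfrak{gl}_N)^{\mathfrak{gl}_N}$-covariantly, and it relies on an explicit filtration argument comparing $U'$ with its classical limit.

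\medskip

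\textbf{Remark on strategy.} Since the result is cited from \cite{KS}, in practice I would simply reference their proof for part (ii) and only spell out the coproduct computation and the reflection-equation relation for $\widetilde{L}$, which are the parts used downstream in this paper (they are needed to identify $\widetilde{L}$ with the image of the FRT generators under the Rosso isomorphism and to set up the quantum moment map $\mu_q^\#$). The formula $\Delta(\widetilde{l}^i_j)=\sum_{s,t}l^{+i}_s S(l^{-t}_j)\otimes\widetilde{l}^s_t$ is the one genuinely new ingredient worth verifying in full: expand $\Delta(\widetilde{L}) = \Delta(L^+)\,\Delta(S(L^-)) = (L^+\dot\otimes L^+)(S(L^-)\dot\otimes S(L^-))$ where $\dot\otimes$ denotes the matrix tensor product combined with the algebra tensor product, use $\Delta\circ S = (S\otimes S)\circ\Delta^{op}$, and collect terms; the grouplike behavior $\Delta(\operatorname{det}_q^{-1}) = \operatorname{det}_q^{-1}\otimes\operatorname{det}_q^{-1}$ follows from multiplicativity of the $q$-determinant under $\Delta$.
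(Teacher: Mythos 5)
The paper does not prove this theorem --- it is explicitly attributed to \cite{KS} and stated without argument --- so there is no proof of record here to compare your sketch against; what you have written is an exposition of the known result, which is all that can reasonably be asked.

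Your sketch is broadly sound. You are right to separate the easy direction (that the $\widetilde{l}^i_j$ and $\operatorname{det}_q^{-1}$ lie in $U'$, and that the stated coproduct formulas hold, giving the coideal property) from the hard direction (that they generate $U'$), and your reduction of the hard direction to the Joseph--Letzter/Rosso Peter--Weyl decomposition of $U'$ under the adjoint action is the standard route. You have also silently corrected a typo in the paper's statement: the displayed coproduct writes $S(l^{-t}_k)$, with $k$ unbound, where it should read $S(l^{-t}_j)$; your version $\Delta(\widetilde{l}^i_j)=\sum_{s,t}l^{+i}_s S(l^{-t}_j)\otimes\widetilde{l}^s_t$ is the correct one, and follows from the short computation you indicate.

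One step in part (i) is stated imprecisely enough to be misleading. The reflection-equation relation $R_{21}\widetilde{L}_1R_{12}\widetilde{L}_2=\widetilde{L}_2R_{21}\widetilde{L}_1R_{12}$ governs commutation of the entries $\widetilde{l}^i_j$ \emph{among themselves}; it does not by itself exhibit $\operatorname{span}\{1,\widetilde{l}^i_j\}$ as stable under anything. Likewise, the coproduct formula shows that this span is a left coideal, but being a finite-dimensional left coideal does not automatically make it ad-stable. What actually produces local finiteness is the mixed commutation relation of $L^\pm$ against $\widetilde{L}$, derived from the $RLL$-relations \eqref{l-rel1}; from that one verifies directly that $l^{\pm a}_b\rhd\widetilde{l}^i_j$ lies in $\operatorname{span}\{\widetilde{l}^s_t\}$, and since the $l^{\pm a}_b$ generate $U$, the ad-orbit of each $\widetilde{l}^i_j$ is finite-dimensional. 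Your part (ii) is sketched at the level of ``this is a Peter--Weyl/isotypic-generation argument,'' which is the correct shape, and deferring the combinatorics of which isotypic blocks appear to \cite{KS} (or \cite{JL}) is reasonable given that the paper itself does the same.
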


Let $U^+$ denote the subalgebra generated by the $\widetilde{l}^i_{j}$.  Item (2) above implies that $U^+$ is a co-ideal subalgebra in $U'$.

\subsection{Braided quiver coordinate algebra}\label{RTTsec1}

Fix a quiver $Q=(V,E)$, and a dimension vector $\dd:V\to\ZZ_{\geq 0}$. We specialize\glossary{$\cC_v$,$W_v$} $\cC_v=U_q(\mathfrak{gl}_{d_v})$-mod, with $W_v=\CC^{d_v},$ its defining representation. In this case, the matrix representation $(R^v)^{ij}_{kl}$ of the universal $R$-matrix is defined relative to the standard basis of $\CC^{d_v}$, so that $\sigma_{W_v,W_v}(w_i\ot w_j)=(R^v)^{kl}_{ij}w_l\ot w_k$.

Recall that the identities $(S\ot \id)(R)=R^{-1}$ and $(S\ot S)(R)=R$ imply the formulas:
\begin{align*}\sigma_{W_v^*,W_v}(v^i\ot v_j)&= \sum_{\alpha,\beta}(R^{-1})^{i\alpha}_{\beta j} v_\alpha\ot v^\beta,\\
\sigma_{W_v,W_v^*}^{-1} (v^i\ot v_j)&=\sum_{\alpha,\beta}R^{i\alpha}_{\beta j}v_\alpha\ot v^\beta,\\
\sigma_{W_v^*,W_v^*}(v^i\ot v^j) &= \sum_{\alpha,\beta}R^{ij}_{\alpha\beta} v^\beta\ot v^\alpha.\\
\end{align*}
%
In the definitions to follow, we introduce the following four matrices (where the $a(e)^i_j$ are formal symbols):\glossary{$R_{21}$, $A^e_1$, $A^e_2$}
 
 \begin{align}\label{RTTnot}R:= \sum_{i,j} R^{ij}_{kl}(E^k_i\ot E^l_j),\quad R_{21}:= \sum_{i,j} R^{ji}_{lk}(E^k_i\ot E^l_j),\\ A_1^e:=\sum_{ij} a(e)^i_j(E^j_i\ot \id), \quad A_2^e:=\sum_{ij} a(e)^i_j(\id\ot E^j_i).\nonumber\end{align}
 
 The following is a more concrete reformulation of Definition \ref{bqca}:

\begin{definition} \label{RTTbqca}The braided quiver coordinate algebra, $\cO_q(\Mat_\dd(Q))$, \glossary{$\cO_q(\Mat_\dd(Q))$} is the algebra generated by elements $a(e)^i_j$, for $e\in E$, $i=1,\ldots d_{\alpha(e)}$, and $j=1,\ldots,d_{\beta(e)}$, subject to:
\begin{enumerate}
\item Relations between generators on the same edge:
\begin{align*}\overset{v}{\bullet}\overset{e}{\rightarrow}\overset{w}{\bullet}:&&
R^vA^e_2A^e_1&=A^e_1A^e_2R_{21}^w,\\
\overset{v}{\bullet}\overset{e}{\rightloop}:&& R^v_{21}A^e_1R^vA^e_2 &= A^e_2R_{21}^vA^e_1R^v,\end{align*}

\item Relations between generators on distinct edges (assume $e<f$):
\begin{align*}
{\bullet}\overset{f}{\rightarrow}\bullet\quad\bullet\overset{e}{\rightarrow}{\bullet}:&& A^f_1A^e_2 &= A^e_2A^f_1\\
\overset{v}{\bullet}\underset{f}{\overset{e}{\rightrightarrows}}\overset{w}{\bullet}:&& A^f_1A^e_2 &= R^v A^e_2A^f_1R^w\\
\overset{v}{\bullet}\underset{f}{\overset{e}{\rightleftarrows}}\overset{w}{\bullet}:&&
A^f_1R^vA^e_2&=A^e_2(R^w)^{-1}A^f_1\\
{\bullet}\overset{e}{\rightarrow}\overset{v}{\bullet}\overset{f}{\rightarrow}\bullet:&&
A^f_1A^e_2&= A^e_2(R^v)^{-1}A^f_1,\\
\bullet\overset{e}{\leftarrow}\overset{v}{\bullet}\overset{f}{\leftarrow}\bullet:&&
A^f_1R^vA^e_2&=A^e_2A^f_1\\
{\bullet}\overset{e}{\rightarrow}\overset{v}{\bullet}\overset{f}{\leftarrow}\bullet:&&
A^f_1A^e_2&=A^e_2A^f_1R^v\\
 {\bullet}\overset{e}{\leftarrow}\overset{v}{\bullet}\overset{f}{\rightarrow}\bullet:&&A^f_1A^e_2&=R^vA^e_2A^f_1,\\
 \bullet \overset{e}{\rightarrow}\overset{v}{\bullet} \overset{f}{\rightloop}:&&A^f_1A^e_2&=A^e_2(R^v)^{-1}A^f_1R^v\\
 \bullet \overset{e}{\leftarrow}\overset{v}{\bullet} \overset{f}{\rightloop}:&&A^f_1R^vA^e_2&=R^vA^e_2A^f_1\\
 \bullet \overset{f}{\rightarrow}\overset{v}{\bullet} \overset{e}{\rightloop}:&&A^f_1R^vA^e_2&=A^e_2A^f_1R^v\\
 \bullet \overset{f}{\leftarrow}\overset{v}{\bullet} \overset{e}{\rightloop}:&&A^f_1A^e_2&=R^vA^e_2(R^v)^{-1}A^f_1\\
 \overset{e}{\leftloop}\overset{v}{\bullet} \overset{f}{\rightloop}:&&A^f_1R^vA^e_2(R^v)^{-1}&=R^vA^e_2(R^v)^{-1}A^f_1\\
%
\end{align*}

\end{enumerate}
\end{definition}

\subsection{Braided quiver differential operator algebra}\label{RTTsec2}
To the notation of equation \eqref{RTTnot}, we add: \glossary{$D^e_1$, $D^e_2$, $\Omega$}
$$D^e_1:= \sum_{k,l} \partial(e)^k_l(E^l_k\ot \id),\quad D^e_2:= \sum_{k,l} \partial(e)^k_l(\id\ot E^l_k),\quad \Omega:=\sum_{i,j} E^i_j\ot E^j_i.$$

\begin{definition} The braided quiver differential operator algebra, $\cD_q(\Mat_\dd(Q))$, \glossary{$\cD_q(\Mat_\dd(Q))$}is the algebra generated by elements $a(e)^i_j$ and $\partial(e)^k_l$, for $e\in E,$ with  $i,l=1,\ldots,d_{\alpha(e)},$ and $j,k=1,\ldots, d_\beta(e)$, subject to:\label{Dqdefn}
\begin{enumerate}
\item The generators $a(e)^i_j$ satisfy the same relations amongst themselves as $a(e)^i_j$ in (1) and (2) of Definition \ref{RTTbqca}.
\item The generators $\partial(e)^k_l$ satisfy the same relations amongst themselves as $a(e^\vee)^k_l$ in (1) and (2) of Definition \ref{RTTbqca}.
\item For $e\neq f$, the generators $a(e)^i_j$ and $\partial(e)^k_l$ satisfy the same cross relations as $a(e)^i_j$ and $a(e^\vee)^k_l$, respectively in (2) of Definition \ref{RTTbqca}.
\item For generators $a(e)^i_j$ and $\partial(e)^k_l$ on the same edge, we have the cross relations:
\begin{align*}\overset{v}{\bullet}\overset{e}{\rightarrow}\overset{w}{\bullet}:&&
D^e_2(R^v)^{-1}A^e_1&=A^e_1R^wD^e_2 + \Omega,\\
\overset{v}{\bullet}\overset{e}{\rightloop}:&& R_{21}^vD^e_1R^vA^e_2 &= A^e_2R_{21}^vD^e_1(R_{21}^v)^{-1},\end{align*}

\end{enumerate}
\end{definition}

\subsection{Familiar examples}

Definition \ref{bqca} encompasses many standard examples in the theory of quantum groups when applied to small quivers; these are illustrated below.  To simplify notation, we do not specify ranges of free indices in equations, when the range is clear from context.
\begin{example}\label{Kron} The Kronecker quiver.  Let $Q=\overset{\alpha}{\bullet}\xrightarrow{e}\overset{\beta}{\bullet}$.  Choose dimensions $d_\alpha$, $d_\beta$ and let $\cC_\alpha=U_q(\mathfrak{gl}_{d_\alpha})$, and $\cC_\beta=U_q(\mathfrak{gl}_{d_\beta})$. Let $V_\alpha=\CC^{d_\alpha}$, $V_\beta=\CC^{d_\beta}$.  Then $\cO_q(\Mat_\dd(Q))$ is the quotient of the free algebra with generators $a^i_j$, with $i=1,\ldots, d_\alpha,$ and $j=1,\ldots, d_\beta$, by the relations:
$$\sum_{k,l}R^{ji}_{kl}a^l_ma^k_n=\sum_{k,l}a^j_l a^i_k R^{kl}_{mn},$$
or equivalently:
\begin{align}\label{eFRT-relns} 
a^i_ma^j_n  &= q^{\delta_{mn}}a^j_na^i_m + \theta(m-n)(q-q^{-1})a^j_ma^i_n, &(i>j),\\
a^i_ma^i_n &= q^{-1}a^i_na^i_m, &(m>n).\nonumber
\end{align}
Similarly, $\cD_q(\Mat_\dd(Q))$ is the quotient of the free algebra with generators $a^i_j,\partial^k_l$ with $i, l=1,\ldots,d_\alpha,$ and $j,k=1,\ldots,d_\beta$, by the relations:
$$
\sum_{k,l}R^{ij}_{kl}a^l_ma^k_n=\sum_{k,l}a^i_la^j_kR^{kl}_{mn}, \quad \sum_{k,l}R^{ij}_{kl}\partial^l_m\partial^k_n=\sum_{k,l}\partial^i_l\partial^j_kR^{kl}_{mn}$$
$$\sum_{k,l}\partial^i_k (R^{-1})^{jk}_{lm}a^l_n = \sum_{k,l}a^j_k R^{ki}_{nl}\partial^l_m +q\delta^i_n\delta^j_m,$$

Equivalently, the generators $a^i_j$ and $\partial^k_l$ satisfy relations \eqref{eFRT-relns} amongst themselves, and the cross relations:
\begin{align*}
\partial^i_ma^j_n &= q^{\delta_{in}+\delta_{jm}}a^j_n\partial^i_m + \delta_{in}q^{\delta_{jm}}(q-q^{-1})\sum_{p>i}a^j_p\partial^p_m + \delta_{jm}(q^2-1)\sum_{p>j}\partial^i_pa^p_n + \delta_{in}\delta_{jm}
\end{align*}
We observe that $\cO_q$ is the \emph{equivariant FRT algebra} (see Proposition \ref{Oeflat}), while $\cD_q$ is, apparently, new.
\end{example}

\begin{example} The quantum plane.  Let $Q$ be the Kronecker quiver with $d_\alpha=1$, and $d_\beta=N\in\NN$.  The defining representation for $U_q(\mathfrak{gl}_1)$ has $R_{V,V}=q\in\CC^\times$, so that setting $x_j:=a^1_j$, we have that $\cO_q(\Mat_\dd(Q))$ is a quotient of the free algebra generated by $x_1,\ldots x_N$ by the relations:
\begin{align}x_ix_j&=q^{-1}x_{j}x_{i},& (i>j).\label{qplane}\end{align}
Likewise, $\cD_q(\Mat_\dd(Q))$ is the quotient of the free algebra generated by $x_j,\partial^k$, with $j,k=1,\ldots, N$, by the relations \eqref{qplane}, and: 
\begin{align*}
\partial^i\partial^j&=q\partial^j\partial^{i},&(i>j)\\
\partial^ix_j &= qx_j\partial^i&(i\neq j).\\
\end{align*}
\vspace{-.5in}
$$\partial^ix_i = q^2x_i\partial^i + (q^2-1)\sum_{k>i}x_k\partial^k +q.$$

In this case, the relations essentially reduce to the relations for the ``quantum Weyl algebra" from \cite{GZ}.
\end{example}

\begin{example}\label{JordanQuiver} The Jordan normal form quiver.  Let $Q$ have a single vertex $v$, and loop $e:v\to v$.  Let $\cC=U_q(\mathfrak{gl}_N)$-mod, and $V=\CC^N$.  Then 
$\cO_q(\Mat_\dd(Q))$ is the quotient of the free algebra with generators $a^i_j$, for $i,j=1,\ldots N$, with relations:
$$ \sum_{k,l,o,p}R^{ji}_{kl}a^l_pR^{pk}_{mo}a^o_n = \sum_{p,q,s,t}a^j_pR^{pi}_{sq}a^q_tR^{ts}_{mn},$$
or equivalently:
\begin{align}\label{OqJord}
\\
a^i_ma^j_n &= q^{\delta_{in}+\delta_{mn}-\delta_{mj}}a^j_na^i_m + q^{\delta_{im}-\delta_{jm}}(q-q^{-1})\theta(n-m)a^j_ma^i_n \quad\quad\quad\quad\quad(i>j)\nonumber\\
&\phantom{===} + q^{\delta_{mn}-\delta_{jm}}\delta_{in}(q-q^{-1})\sum_{p>i}a^j_pa^p_m - \delta_{mj}(1-q^{-2})\sum_{p>j}a^i_pa^p_n\nonumber\\
&\phantom{===} + q^{-\delta_{jm}}\delta_{im}(q-q^{-1})^2\theta(n-m)\sum_{p>i}a^j_pa^p_n.\nonumber\\
a^i_ma^i_n &= q^{\delta_{in}-\delta_{im}-1}a^j_na^i_m + q^{-1-\delta_{jm}}\delta_{in}(q-q^{-1})\sum_{p>i}a^i_pa^p_m \quad\quad\quad\quad\quad\quad\quad (m>n)\nonumber
\end{align}

Likewise, $\cD_q(\Mat_\dd(Q))$ is the quotient of the free algebra with generators $a^i_j,\partial^k_l$, for $i,j,k,l=1,\ldots, N$, and relations:
$$ \sum_{k,l,o,p}R^{ji}_{kl}a^l_pR^{pk}_{mo}a^o_n = \sum_{p,q,s,t}a^j_pR^{pi}_{sq}a^q_tR^{ts}_{mn},$$
$$ \sum_{k,l,o,p}R^{ji}_{kl}\partial^l_pR^{pk}_{mo}\partial^o_n = \sum_{p,q,s,t}\partial^j_pR^{pi}_{sq}\partial^q_tR^{ts}_{mn},$$
$$\sum_{k,l,o,p}R^{ji}_{kl}\partial^l_pR^{pk}_{mo}a^o_n = \sum_{k,l,o,p}a^j_lR^{li}_{pk}\partial^k_o(R^{-1})^{po}_{nm}.$$
Equivalently, the $a^i_j$ and $\partial^i_j$ satisfy the relations \eqref{OqJord} amongst themselves, and the cross relations (for $i, j, m, n$ distinct):
\begin{align}\label{DqJord}\\
\partial^i_ma^j_n &= q^{\delta_{i,n}-\delta_{m,n}-\delta_{m,j}-\delta_{i,j}}a^j_n\partial^i_m - \theta(j-i)q^{\delta_{i,m}-\delta_{m,j} -\delta_{i,j}}(q-q^{-1})\partial^j_ma^i_n\nonumber\\
&\phantom{===} - \theta(m-n)(q-q^{-1})q^{\delta_{i,m}-\delta_{m,j}-\delta_{i,j}}a^j_m\partial^i_n
 -(q-q^{-1})q^{\delta_{m,j}}\delta_{m,j}\sum_{p>k}\partial^i_pa^p_n\nonumber\\
&\phantom{===} - (q-q^{-1})^2q^{-\delta_{m,j}-\delta_{i,j}}\delta_{i,m}\theta(j-i)\sum_{p>k}\partial^j_pa^p_n\nonumber\\
&\phantom{===} - (q-q^{-1})q^{-\delta_{m,n}-\delta_{m,j}-\delta_{i,j}}\delta_{i,n}\sum_{p>i}a^j_p\partial^p_m\nonumber\\
&\phantom{===} + (q-q^{-1})^2q^{-\delta_{m,j}-\delta_{i,j}}\theta(m-n)\delta_{i,m}\sum_{p>i}a^j_p\partial^p_n\nonumber
\end{align}

In this case, $\cO_{q}$ is the well-known reflection equation algebra \cite{Do}, \cite{Ma}, \cite{Mu}, while $\cD_{q}$ is the algebra $\mathbb{D}^+$ of polynomial quantum differential operators on quantum $GL_n$, as studied in \cite{VV}.
\end{example}

\subsection{New examples}
New examples of interest are detailed below.  For two $\CC$-algebras $A, B$, we let $A\ast B$ denote their free product, and we use the notation $\displaystyle{\overset{\ast}{\prod_{i\in I}}A_i}$ for iterated free products.

\begin{example} The Calogero-Moser quiver\glossary{Calogero-Moser quiver}.  Let $Q$ and $d$ be the Calogero-Moser quiver and dimension vector, $(Q,d)= \overset{1}{\bullet}\rightarrow\overset{n}{\bullet}\rightloop$.  Then $\cO_q(\Mat_\dd(Q))$ is the quotient of the free product, $\cO_q(\overset{1}{\bullet}\rightarrow\overset{n}{\bullet}) \ast \cO_q(\overset{n}{\bullet}\rightloop)$, by the relations:
$$\sum_{k,l} x_k R^{k i}_{j l}a^l_m = \sum_{k,l} a^i_k x_l R^{l k}_{jm}.$$
Likewise, $\cD_q(\Mat_\dd(Q))$ is the quotient of the free product, $\cD_q(\overset{1}{\bullet}\rightarrow\overset{n}{\bullet}) \ast \cD_q(\overset{n}{\bullet}\rightloop)$, by the relations:
$$\sum_{k,l}x_k R^{k i}_{j l}a^l_m = \sum_{k,l}a^i_k x_l R^{l k}_{jm}, \quad
\sum_{k,l}x_k R^{k i}_{j l}\partial^l_m = \sum_{k,l}\partial^i_k x_l R^{l k}_{jm}$$
$$\partial^ia^j_m = \sum_{k,l,o,p}R^{ij}_{kl}a^l_o(R^{-1})^{k o}_{p m} \partial^p, \quad
\partial^i\partial^j_m = \sum_{k,l,o,p}R^{ij}_{kl}\partial^l_o(R^{-1})^{k o}_{p m} \partial^p$$
\end{example}

\begin{example}\label{starexample} Star shaped quiver. \glossary{star-shaped quiver} Let $Q$ be the star-shaped quiver, with legs of length $l_1,\ldots,l_m$, and with nodal vertex $v_0$.  We adopt the following labelling convention on $Q$.  The vertex set of $Q$ is:
$$V:=\{v_{\alpha\beta}\,\,|\,\,\alpha=1, \ldots, m,\,\beta=0, \ldots l_i\},$$where each $v_{\alpha\beta}$ is on the $\alpha$th leg, at a distance of $\beta$ edges from the node, and $v_0=v_{\alpha,0}$, for all $\alpha=1\,\ldots, m$.  The edge set of $Q$ is:
$$E:=\{e_{\alpha,\beta}:v_{\alpha,\beta+1}\to v_{\alpha,\beta}\,\,|\,\,\alpha=1,\ldots m, \beta=0,\ldots, l_\alpha-1\}.$$
The labelling is depicted below:

$$\xymatrix{ & \ar@{->}[ddl]_{e_{10}} v_{1,1} & \ar@{->}[l]_{e_{11}} \cdots & \ar@{->}[l]_{e_{1,l_1-1}} v_{1,l_1}\\
& \ar@{->}[dl]^{e_{20}} v_{2,1} & \ar@{->}[l]_{e_{21}} \cdots & \ar@{->}[l]_{e_{2,l_2-1}} v_{2,l_2}\\
v_0 & \vdots  & \vdots & \vdots \\
& \ar@{->}[ul]_{e_{m0}} v_{m,1} & \ar@{->}[l]_{e_{m1}} \cdots & \ar@{->}[l]_{e_{m,l_m-1}} v_{m,l_m}}
$$

We choose for the ordering on the edges the natural lexicographic ordering on the indices.  We set $d_v=1$ for all $v\neq v_0$, and $d_{v_0}=n$; we call such $\dd$ the Calogero-moser dimension vector for $Q$.  By Example \ref{Kron}, for $\alpha=1,\ldots m$, $\beta=1\ldots l_i-1$, each edge algebra $\cD_q(e_{ij})$ has two generators, which we denote $x_{\alpha}$ and $\partial_{\beta}$. Likewise, each edge algebra $\cD_q(e_{i,0})$ has $2n$ generators, which we denote $y_{\alpha1}, \ldots, y_{\alpha n}$, $\xi_\alpha^1, \ldots, \xi_\alpha^n$. Then, $\cD_q(\Mat_\dd(Q))$ is the quotient of the free product, $\displaystyle{\overset{\ast}{\prod_{e_{\alpha\beta}\in E}}\cD_q({\bullet}\xrightarrow{e_{\alpha\beta}}\bullet)},$ by the relations that all generators without a common vertex commute, and cross-relations on the remaining edges:
$$
x_{\alpha,\beta-1}x_{\alpha\beta}=qx_{\alpha\beta}x_{\alpha,\beta-1},\quad \partial_{\alpha,\beta-1}\partial_{\alpha\beta}=q^{-1}\partial_{\alpha\beta}\partial_{\alpha,\beta-1}$$
$$\partial_{\alpha\beta}x_{\alpha,\beta-1}=qx_{\alpha,\beta-1}\partial_{\alpha\beta},\quad
x_{\alpha\beta}\partial_{\alpha,\beta-1}=q^{-1}\partial_{\alpha,\beta-1}x_{\alpha\beta},$$
$$y_{\beta i}y_{\alpha j} = \sum_{k,l}y_{\alpha k}y_{\beta l}R^{lk}_{ij},\quad\xi^i_\beta \xi^j_\alpha = \sum_{k,l} R^{ij}_{kl}\xi^l_\beta \xi^k_\beta, \quad (\textrm{for } \alpha<\beta),$$
$$\xi^i_\alpha y_{\beta j}=\sum_{k,l}y_{\beta k}R^{ki}_{jl}\xi_\alpha^l,\quad \xi_\beta^iy_{\alpha j} = \sum_{k,l} y_{\alpha k} (R^{-1})^{ik}_{lj}\xi_\beta^l.
$$
\edit{and a few more for the nodal vertex}
\end{example}

\begin{remark}
It has been suggested to us by B. Webster that the case when $Q$ is arbitrary non-Dynkin, but $d_v=1$ for all $v$ should yield quantizations of hypertoric varieties associated to $Q$.  We hope to study such examples in the future.
\end{remark}

\subsection{Monomial notation}
In order to denote monomials in the generators of $\cO_q$ and $\cD_q$, we introduce the following shorthand. Let $I$ be an ordered list of triples $I=(e_i\in E, m_i\in\{1,\ldots d_{\alpha(e)}\},n_i\in\{1,\ldots,d_\beta(e)\})_i$, and $J$ an ordered list of triples $J=(f_i\in E^\vee,o_i\in \{1,\ldots d_{\beta(e)}\},p_i\in\{1,\ldots,d_\alpha(e)\})_i$, we denote the products\glossary{$a_I$,$\partial_J$, standard monomials}
\begin{align*}
a_I&:=a(e_{1})^{m_1}_{n_1}\cdots a(e_k)^{m_k}_{n_k}\\
\partial_J&:=\partial(f_1)^{o_1}_{p_1}\cdots \partial(f_l)^{o_l}_{p_l}.
\end{align*}
When there is no risk of confusion, we omit the specification of the edge in the notation (e.g, we write $a^i_j$ instead of $a(e)^i_j$). The list $I$ will be said to be ordered, if for all $i<j$, either $e_i<e_j$. or $e_i=e_j$ and $m_i<m_j$, or $e_i=e_j,m_i=m_j$ and $n_i\leq n_j$.  Likewise the list $J$ will be said to be ordered, if for all $i<j$, either  $f_i<f_j$. or $f_i=f_j$ and $o_i<o_j$, or $f_i=f_j,o_i=o_j$ and $p_i\leq p_j$.  Monomials $a_I\partial_J$, for $I,J$ ordered, will be called \emph{standard monomials}.
\section{Flatness of the algebras $\cO_q$ and $\cD_q$}\label{flatness-section}
In the present section, we prove that the algebras $\cO_q$ and $\cD_q$ constructed in previous sections are flat noncommutative deformations of their classical counterparts, the algebras $\cO(\operatorname{Mat}_\dd(Q))$ and $\cD(\operatorname{Mat}_\dd(Q))$.
More precisely, we show that the set of standard monomials form a basis of $\cO_q$ and $\cD_q$.

\begin{proposition}\label{Oeflat}  We have the following descriptions for $\cO_q(e)$:
\begin{enumerate}
\item If $\alpha(e)\neq \beta(e)$, then $O_q(e)$ is twist equivalent to the FRT algebra via the tensor equivalence $\sigma\bt\id:\cC\bt\cC\to\cC^{\vee}\bt\cC$.
\item If $\alpha(e)=\beta(e)$, then $O_q(e)$ is isomorphic to the reflection equation algebra.
\end{enumerate}
\end{proposition}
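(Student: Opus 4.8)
The strategy is to unwind the abstract definition of $\cO_q(e)$ via $\operatorname{Im} R(e)$ into explicit matrix relations, and then recognize those relations as the (suitably twisted) FRT relations or reflection equation relations. For part (1), assume $\alpha(e)\neq\beta(e)$. The object $\Mat(e)=W_\alpha^*\ot W_\beta$ lives in the $\bt$-component $\cC_\alpha\bt\cC_\beta$ of $\cC$; its tensor square $\Mat(e)\ot\Mat(e) = W_\alpha^*\ot W_\beta\ot W_\alpha^*\ot W_\beta$ should be reorganized, using the braiding to pass $W_\beta$ past $W_\alpha^*$, into $(W_\alpha^*\ot W_\alpha^*)\bt(W_\beta\ot W_\beta)$. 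The defining relation $R(e) = \sigma_{W_\alpha^*,W_\alpha^*} - \sigma_{W_\beta,W_\beta}$ then says precisely that the generators transform under the $W_\alpha^*$-braiding on one side and the $W_\beta$-braiding on the other. Substituting the explicit formulas
$$\sigma_{W_v^*,W_v^*}(v^i\ot v^j) = \sum_{\alpha,\beta}R^{ij}_{\alpha\beta}v^\beta\ot v^\alpha$$
recorded in Section \ref{RTTsec1}, and writing out $R(e)(\ker)=0$ in the standard bases, one obtains exactly the relation $R^v A^e_2 A^e_1 = A^e_1 A^e_2 R^w_{21}$ of Definition \ref{RTTbqca}(1). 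This is the equivariant FRT algebra: the usual FRT algebra has $R M_1 M_2 = M_2 M_1 R$, and the discrepancy (the appearance of $R$ versus $R_{21}$, and the swap of leg $1$ and leg $2$) is precisely accounted for by the tensor equivalence $\sigma\bt\id:\cC\bt\cC\to\cC^\vee\bt\cC$, which reverses the braiding in the first factor. So the concrete work is: (a) track the braiding isomorphisms identifying $\Mat(e)^{\ot 2}$ with $(W_\alpha^*)^{\ot 2}\bt W_\beta^{\ot 2}$, (b) compute $\operatorname{Im}R(e)$ in coordinates, (c) match with the FRT relations under the named twist.

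For part (2), assume $\alpha(e)=\beta(e)=:v$, so $\Mat(e)=W_v^*\ot W_v\in\cC_v$. Now $\Mat(e)\ot\Mat(e) = W_v^*\ot W_v\ot W_v^*\ot W_v$, and the defining morphism is $R(e) = \sigma_{W_v,W_v^*}^{-1}\circ(\sigma_{W_v^*,W_v^*} - \sigma_{W_v,W_v})$. Here all four tensor factors interact nontrivially, and after substituting the three braiding formulas from Section \ref{RTTsec1} and the formula $\sigma_{W_v,W_v^*}^{-1}(v^i\ot v_j)=\sum R^{i\alpha}_{\beta j}v_\alpha\ot v^\beta$, the vanishing of $\operatorname{Im}R(e)$ should collapse to the relation $R^v_{21}A^e_1 R^v A^e_2 = A^e_2 R^v_{21}A^e_1 R^v$, which is the reflection equation (the relation defining the reflection equation algebra, also called the braided matrix algebra $B(R)$ — see \cite{Ma},\cite{Mu},\cite{Do}). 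So here it suffices to do the coordinate computation and cite the identification of these quadratic relations with those of the reflection equation algebra; since the category $\cC_v$ and the object $\Mat(e)$ are literally the same as in the standard treatment, no twist is needed.

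The main obstacle I expect is purely bookkeeping: keeping consistent conventions for the isomorphism $W_v^*\ot W_v\cong\operatorname{End}(W_v)$, for which braiding acts in which leg after the reorganizing isomorphisms are applied, and for the transpose/inverse relations between $R$, $R^{-1}$, $R_{21}$, and $(S\ot\id)(R)$. A secondary subtlety is part (1)'s claim about the twist: one must verify that $\sigma\bt\id$ is genuinely a tensor equivalence $\cC\bt\cC\to\cC^\vee\bt\cC$ (i.e.\ that reversing the braiding in the first factor is compatible with the hexagons — which holds because $\cC^\vee$ is defined with braiding $\sigma^{-1}_{21}$ or equivalently $\sigma$ reversed) and that under this equivalence the FRT algebra in $\cC^\vee\bt\cC$ pulls back to $\cO_q(e)$ in $\cC\bt\cC$ — this is really the statement that the quadratic datum $\operatorname{Im}R(e)$ is the image of the FRT quadratic datum under the equivalence, which again reduces to the coordinate matching already done. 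There is no deep difficulty; once the relations are written explicitly the identifications are immediate, which is why the proposition can be stated without its own proof being long.
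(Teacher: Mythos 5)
Your proposal follows essentially the same route as the paper: unwind $\operatorname{Im}R(e)$ into coordinate relations, recognize them as FRT (under the twist $\sigma\bt\id$) in the non-loop case and as the reflection equation in the loop case. The paper executes part (1) slightly more directly by writing the twisted multiplication as $m'=m\circ(R^{-1}\bt\id)$ and verifying the FRT relation in one line, and disposes of part (2) by simply citing the coordinate computation already done in Example \ref{JordanQuiver}, but these are organizational rather than substantive differences from what you describe.
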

\begin{proof}The $\cC^{\vee}\bt\cC$-algebra $\cO'_q(e)$, twist-equivalent to $\cO_q(e)$, has the same underlying vector space as $\cO_q(e)$, with multiplication given by $m':=m\circ (R^{-1}\bt\id),$ where $m$ denotes the product in $\cO_q(e)$.  In particular, $\cO'_q(e)$ is generated by elements $\tilde{a}^i_j$, $i=1,\ldots, n, j=1,\ldots, m$, with relations:
$$R^{ij}_{op}\tilde{a}^o_m\tilde{a}^p_n=a^i_ma^j_n=(R^{ji}_{op})^{-1}a^o_ka^p_lR^{lk}_{mn}=\tilde{a}^j_k\tilde{a}^i_lR^{lk}_{mn},$$ which are the relations of the FRT algebra.  On the other hand if $\alpha=\beta$, we have seen in Example \ref{JordanQuiver} that we recover the relations of the reflection equation algebra.
\end{proof}

$\cO_q$ is defined as a braided tensor product of the edge algebras $\cO_q(e)$, which are flat by Proposition \ref{Oeflat}, together with the well-known flatness of the FRT and RE algebras (see, e.g. \cite{KS}).  More precisely, we have:
\begin{corollary} The algebra $\cO_q$ is a flat deformation of the algebra $\cO(\operatorname{Mat}_\dd(Q))$. A basis of $\cO_q$ is given by the set of standard monomials $a_I$.
\end{corollary}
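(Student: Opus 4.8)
The plan is to reduce the claim to the flatness of the individual edge algebras $\cO_q(e)$, which is already in hand from Proposition \ref{Oeflat} together with the classical PBW property of the FRT and reflection equation algebras. The key structural fact is that a braided tensor product of algebras in a braided tensor category has, as its underlying vector space, the \emph{ordinary} tensor product of the underlying vector spaces, with the two structure maps $A\to A\ot B\leftarrow B$ algebra homomorphisms and the relative commutation of the two factors governed by the braiding: for $b\in B$, $a\in A$ one has $b\cdot a=\sum a'_i\cdot b'_i$, where $\sigma_{B,A}(b\ot a)=\sum a'_i\ot b'_i$ in $A\ot B$. Iterating over the chosen ordering of $E$, the underlying space of $\cO_q=\overrightarrow{\bigotimes_{e\in E}}\cO_q(e)$ is canonically $\bigotimes_{e\in E}\cO_q(e)$, and the cross-relations in part (2) of Definition \ref{RTTbqca} are precisely the instances of $\sigma$ between generators on distinct edges (reducing to the ordinary flip when the edges share no vertex). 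Consequently every monomial in the generators of $\cO_q$ can be rewritten --- moving all generators on the $\le$-least edge to the left, then recursing on the remaining edges --- as a $\CC$-linear combination of products $\prod_{e\in E}(\text{monomial in the }a(e)^i_j\text{ alone})$, the edge blocks occurring in the fixed order.

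I would then invoke Proposition \ref{Oeflat}: for $\alpha(e)\ne\beta(e)$ a Drinfeld twist is the identity on the underlying graded vector space, so $\cO_q(e)$ has the same graded dimensions as the FRT algebra; for $\alpha(e)=\beta(e)$, $\cO_q(e)$ \emph{is} the reflection equation algebra. In both cases, by the well-known flatness of these algebras (see, e.g.\ \cite{KS}), $\cO_q(e)$ is a flat deformation of $\cO(\Mat(e))$ whose underlying space has basis the ordered monomials in $a(e)^i_j$. Tensoring these bases over $e\in E$ yields a basis of $\bigotimes_e\cO_q(e)=\cO_q$, and by the previous paragraph this basis is exactly the set of standard monomials $a_I$ with $I$ ordered. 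Finally, specializing $q\to 1$, the braiding becomes the tensor flip, each $\cO_q(e)$ becomes $\cO(\Mat(e))$, and the braided tensor product becomes $\bigotimes_e\cO(\Mat(e))=\cO(\Mat_\dd(Q))$; since the standard-monomial basis of $\cO_q$ deforms the standard-monomial basis of $\cO(\Mat_\dd(Q))$ term by term, $\cO_q$ is a flat deformation.

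The step I expect to be the main obstacle is purely internal to a single edge: checking that the ordered monomials in the paper's precise sense (edges in the fixed order, and within one edge lexicographic in the pair of indices) genuinely form a basis of $\cO_q(e)$. For the loop case this is the standard reflection-equation PBW theorem; for the non-loop case one transports the FRT Hilbert series through the twist of Proposition \ref{Oeflat} and verifies that the chosen lexicographic order makes the quadratic relations \eqref{eFRT-relns} order-reducing, so that Bergman's Diamond Lemma (equivalently, a Hilbert-series count against the classical polynomial algebra) applies. Once this is granted, the passage to the braided tensor product is just bookkeeping with $\sigma$.
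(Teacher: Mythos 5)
Your proof takes essentially the same approach as the paper: express $\cO_q$ as the braided tensor product of the edge algebras, identify each $\cO_q(e)$ (via twist-equivalence for non-loops, directly for loops) with a known flat algebra (FRT or reflection equation) whose PBW basis is classical, and observe that the braided tensor product has underlying vector space the ordinary tensor product, so the product basis is the set of standard monomials. Your additional remarks about checking that the lexicographic order within a single edge is compatible with the quadratic relations and verifying via Hilbert series or the Diamond Lemma are a useful elaboration of a point the paper cites to \cite{KS} and treats as standard.
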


In fact, the analogous statement holds for $\cD_q$, as well.  The proof is modeled on the proof of Theorem 1.5 of \cite{GZ}, which is a special case.  We have:

\begin{theorem}  The algebra $\cD_q$ is a flat deformation of the algebra $\cD(\operatorname{Mat}_\dd(Q))$. A basis of $\cD_q$ is given by the set of standard monomials $a_I$, $\partial_J$. \label{flatness}
\end{theorem}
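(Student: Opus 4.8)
The plan is to prove the PBW statement — that the standard monomials $a_I\partial_J$ form a basis of $\cD_q$ over the base ring $\CC[q^{\pm1}]$ — by separating the (easy) spanning assertion from the (harder) linear independence, and by handling independence through a filtered dimension count rather than a direct Diamond-Lemma verification of all overlap ambiguities, which would be forbidding given the length of the relation list in Definition \ref{Dqdefn}. The organizing device, following Theorem 1.5 of \cite{GZ}, is a Bernstein-type filtration $F_\bullet\cD_q$ by total degree in the generators $a(e)^i_j,\partial(e)^k_l$. This filtration is well defined precisely because every defining relation is a quadratic expression all of whose terms have degree $\le 2$: the only inhomogeneous relations are the single-edge cross-relations of part (4) of Definition \ref{Dqdefn}, whose correction term $\Omega$ has degree $0$. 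Consequently $\operatorname{gr}\cD_q:=\bigoplus_d F_d/F_{d-1}$ is a $\ZZ_{\ge 0}$-graded algebra, and since $Q$ has only finitely many generators, each component $(\operatorname{gr}\cD_q)_d$ is a finitely generated $\CC[q^{\pm1}]$-module — this is the $\ZZ$-grading with finite-dimensional graded components on which everything hinges. Finally, inspecting the relations at $q=1$ (with $t$ fixed to $1$ as in Remark \ref{teq1}, so that all braidings degenerate to ordinary flips) one sees that $\cD_q\otimes_{\CC[q^{\pm1}]}\CC_{q=1}$ is exactly the Weyl algebra $\cD(\Mat_\dd(Q))$.

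For spanning I would argue as follows. Using the cross-relations, inside any monomial one moves every $\partial$-generator to the right of every $a$-generator, at the cost of terms of strictly smaller total degree (produced by the $\Omega$ corrections); then, invoking Proposition \ref{Oeflat} together with the resulting flatness of $\cO_q=\overrightarrow{\bigotimes_{e}}\cO_q(e)$ (and of the analogous algebra built from the $\cO_q(e^\vee)$), one sorts the $a$-part, and separately the $\partial$-part, into standard order. An induction on total degree absorbs the lower-order corrections and shows that $S$ spans $\cD_q$; moreover the same argument shows that, writing $S_d$ for the set of standard monomials of total degree $d$, the images of $S_d$ span $(\operatorname{gr}\cD_q)_d$.

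Linear independence is the real content. Let $\phi_d\colon\CC[q^{\pm1}]^{S_d}\twoheadrightarrow(\operatorname{gr}\cD_q)_d$ be the spanning map just produced; the goal is injectivity of $\phi_d$. Since $-\otimes_{\CC[q^{\pm1}]}\CC_{q=1}$ is right exact, the filtration induces a surjection $(\operatorname{gr}\cD_q)_d\otimes\CC_{q=1}\twoheadrightarrow\operatorname{gr}_d\big(\cD(\Mat_\dd(Q))\big)$, whose target has $\CC$-dimension exactly $|S_d|$ by the classical PBW theorem for the Weyl algebra (the standard monomials are a basis, compatibly with the Bernstein filtration). Hence $\phi_d\otimes\CC_{q=1}$ is a surjection of $\CC$-vector spaces of equal finite dimension $|S_d|$, so it is an isomorphism; therefore $\ker(\phi_d)\otimes_{\CC[q^{\pm1}]}\CC_{q=1}=0$, and as $\ker\phi_d$ is finitely generated, Nakayama at the maximal ideal $(q-1)$ gives $(\ker\phi_d)_{(q-1)}=0$. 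Since $\CC[q^{\pm1}]$ is a principal ideal domain, the submodule $\ker\phi_d$ of a free module is itself free; having a vanishing localization at $(q-1)$ it has rank $0$, i.e. $\ker\phi_d=0$. Thus $S_d$ is a $\CC[q^{\pm1}]$-basis of $(\operatorname{gr}\cD_q)_d$ for every $d$, so $S$ is a basis of $\operatorname{gr}\cD_q$ and, lifting through the filtration, of $\cD_q$. In particular $\cD_q$ is free over $\CC[q^{\pm1}]$ and $\cD_q\otimes_{\CC[q^{\pm1}]}\CC_{q=1}=\cD(\Mat_\dd(Q))$, carrying standard monomials to standard monomials, which is precisely the claimed flat deformation.

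The step I expect to be the main obstacle is not the homological bookkeeping above but the input it rests on: the \emph{lower} bound $\dim_\CC(\operatorname{gr}\cD_q)_d\otimes\CC_{q=1}\ge|S_d|$. A priori the algebra could partially collapse at $q=1$; it is only the clean identification $\cD_q\otimes\CC_{q=1}=\cD(\Mat_\dd(Q))$ — which requires checking that the full relation set of Definition \ref{Dqdefn} degenerates to, and no further than, the Weyl-algebra relations — combined with the finiteness supplied by the Bernstein filtration, that rules this out and lets the Nakayama/PID argument close. A secondary point requiring care is the termination and well-definedness of the reordering procedure in the spanning step, which is exactly where Proposition \ref{Oeflat} and the flatness of $\cO_q$ (and of its $E^\vee$-analogue) enter.
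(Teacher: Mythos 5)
Your approach is genuinely different from the paper's: you propose a filtered dimension count combined with Nakayama's lemma and the structure of modules over a PID, whereas the paper reduces to a single edge (since braided tensor products preserve PBW bases), proves a key containment lemma by explicit $R$-matrix computation --- namely $T(\Mat(e^\vee))I(e)\subset I(e)T(\Mat(e^\vee))+I(e,e^\vee)$ and its mirror --- and then assembles the PBW basis of $\cD_q(e)$ from the PBW bases of $\cO_q(e)$, $\cO_q(e^\vee)$, and the diamond-lemma normal form for the intermediate algebra $S=T(\Mat(e)\oplus\Mat(e^\vee))/I(e,e^\vee)$. That explicit lemma, not a degree-counting argument, is what does the work in the paper.

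Unfortunately your linear-independence step has a real gap. You pass from ``$\phi_d\otimes\CC_{q=1}$ is an isomorphism'' to ``$\ker(\phi_d)\otimes_{\CC[q^{\pm1}]}\CC_{q=1}=0$,'' but tensoring is only right exact and does not commute with taking kernels. From the short exact sequence
$0\to\ker\phi_d\to\CC[q^{\pm1}]^{S_d}\xrightarrow{\phi_d}(\operatorname{gr}\cD_q)_d\to 0$,
tensoring with $\CC_{q=1}$ and using freeness of the middle term gives
$0\to\operatorname{Tor}_1\bigl((\operatorname{gr}\cD_q)_d,\CC_{q=1}\bigr)\to\ker\phi_d\otimes\CC_{q=1}\to\CC^{S_d}\xrightarrow{\phi_d\otimes 1}(\operatorname{gr}\cD_q)_d\otimes\CC_{q=1}\to 0$.
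When $\phi_d\otimes 1$ is injective, what you actually learn is that the map $\ker\phi_d\otimes\CC_{q=1}\to\CC^{S_d}$ is zero, hence
$\ker\phi_d\otimes\CC_{q=1}\cong\operatorname{Tor}_1\bigl((\operatorname{gr}\cD_q)_d,\CC_{q=1}\bigr)$,
which is the $(q-1)$-torsion of $(\operatorname{gr}\cD_q)_d$ --- and you have not ruled that out. The toy example $P=\CC[q^{\pm1}]\twoheadrightarrow M=\CC[q^{\pm1}]/(q-1)$ is instructive: here $\phi\otimes\CC_{q=1}$ is an isomorphism of one-dimensional spaces, the spanning set has the same cardinality as the $q=1$ dimension, and yet $\ker\phi=(q-1)\CC[q^{\pm1}]\neq 0$ (and $\ker\phi\otimes\CC_{q=1}\neq 0$). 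So a dimension match at $q=1$ cannot by itself exclude relations among standard monomials whose coefficients vanish at $q=1$; excluding exactly such torsion is the content of flatness, making the argument circular as written. The Nakayama and PID steps that follow are contingent on the false premise and do not repair it. To close this route you would need an independent input forcing $(\operatorname{gr}\cD_q)_d$ to be $(q-1)$-torsion-free --- for instance a second dimension count at a generic $q$, or (as the paper does) an explicit normal-form argument that never loses information. Your secondary worry --- the well-definedness of the degeneration $\cD_q\otimes\CC_{q=1}\cong\cD(\Mat_\dd(Q))$ and the termination of the straightening procedure --- is in fact fine, and the identification of the specialization with the Weyl algebra is a purely syntactic fact about the generating relations; the real obstacle is the torsion issue above.
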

\begin{proof}
Since we have defined $\cD_q$ as a braided tensor product of its edge algebras, we need only to prove flatness for each edge algebra $\cD_q(e)$.  By Theorem \ref{Oeflat}, it suffices to prove that $\cD_q(e)\cong \cO_q(e)\ot \cO_q(e^\vee)$, as a vector space.  We make use of the following lemma, which generalizes \cite{GZ}, Lemma 1.6.

\begin{lemma} In the tensor algebra $T(\Mat(e)\oplus \Mat(e^\vee))$, we have the following containments:
\begin{enumerate}
\item $T(\Mat(e^\vee)) I(e) \subset I(e) T(\Mat(e^\vee)) + I(e,e^\vee)$.
\item $I(e^\vee)T(\Mat(e))  \subset T(\Mat(e))I(e^\vee) +I(e,e^\vee)$.
\end{enumerate}
\end{lemma}
\begin{proof} We prove (1) by direct computation; (2) then follows by a similar proof, due to the symmetry in the definition of $I(e,e^\vee)$.  For the first claim, it suffices to show that, for all $o,p,i,j,m,n$, we have
$$\partial^o_p(R^{ij}_{kl}a^l_ma^k_n-a^i_la^j_kR^{kl}_{mn})\in I(e)T(e^\vee) + I(e,e^\vee).$$  This is equivalent to showing that $A^{suo}_{nmv}\in I(e)T(e^\vee) + I(e,e^\vee)$, for all $u,s,o,v,m,n$, where:
$$A^{suo}_{nmv}:=(R^{-1})^{ut}_{jv}(R^{-1})^{sp}_{it}\partial^o_p(R^{ij}_{kl}a^l_ma^k_n-a^i_la^j_kR^{kl}_{mn}),$$
as these differ by an invertible linear transformation, and so generate the same subspace.  We let
\begin{align*}A & := \sum_{u,s,o,v,m,n} A^{suo}_{nmv}(E^n_s \ot E^m_u \ot E^v_o)\\
&= D_3R^{-1}_{13}R^{-1}_{23}R_{12}A_2A_1 - D_3R^{-1}_{13}R^{-1}_{23}A_1A_2R_{21},
\end{align*}
in the notation of Section \ref{sec:QG},
so that the matrix coefficients of $A$ are precisely the $A^{suo}_{nmv}$.  We compute:
\begin{align*}
 A &= D_3\underbrace{R^{-1}_{13}R^{-1}_{23}R_{12}}_{QYBE}A_2A_1 - \underbrace{D_3R^{-1}_{13}A_1}_{I(e,e^\vee)}R^{-1}_{23}A_2R_{21}\\
   &= R_{12}\underbrace{D_3R^{-1}_{23}A_2}_{I(e,e^\vee)}R^{-1}_{13}A_1 - A_1R_{13}\underbrace{D_3R^{-1}_{23}A_2}_{I(e,e^\vee)}R_{21} - \Omega_{13} R^{-1}_{23}A_2R_{21}\\
   &= R_{12}A_2R_{23}\underbrace{D_3R^{-1}_{13}A_1}_{I(e,e^\vee)}  +  R_{12}\Omega_{23} R^{-1}_{13}A_1 - A_1A_2\underbrace{R_{13}R_{23}R_{21}}_{QYBE}D_3\\
   &\phantom{===} - A_1R_{13}\Omega_{23}R_{21} - \Omega_{13}R^{-1}_{23}A_2R_{21}\\
   &= R_{12}A_2A_1R_{23}R_{13}D_3 + R_{12}A_2R_{23}\Omega_{13}  + \underbrace{R_{12}\Omega_{23} R^{-1}_{13}}_{\textrm{cancel inv.}}A_1 - A_1A_2R_{21}R_{23}R_{13}D_3\\
   &\phantom{===} - A_1R_{13}\Omega_{23}R_{21} - \Omega_{13}R^{-1}_{23}A_2R_{21}\\
   &= (R_{12}A_2A_1 - A_1A_2R_{21})R_{23}R_{13}D_3 + \underbrace{(R_{12}-R_{21}^{-1})}_{\textrm{Hecke reln.}}A_2R_{23}\Omega_{13}  + \Omega_{23}A_1\\
   &\phantom{===} - A_1R_{13}\Omega_{23}R_{21}\\
   &= (R_{12}A_2A_1 - A_1A_2R_{21})R_{23}R_{13}D_3 + (q-q^{-1})\Omega_{12}A_2R_{23}\Omega_{13}  + \Omega_{23}A_1\\
   &\phantom{===} - A_1R_{13}\Omega_{23}R_{21}\\
   &= (R_{12}A_2A_1 - A_1A_2R_{21})R_{23}R_{13}D_3\\
&\phantom{===} A_1\Omega_{23}\underbrace{((q-q^{-1})R_{12}\Omega_{12}  + 1
    - R_{12}R_{21})}_{\textrm{Hecke reln.}}\\
   &= (R_{12}A_2A_1 - A_1A_2R_{21})R_{23}R_{13}D_3.\\
\end{align*}
Comparing matrix coefficients, the above reads:
$$A^{suo}_{nmv} =R^{kl}_{nj}R^{io}_{ml}(R^{su}_{pa}a^a_ia^p_k - a^s_pa^u_t R^{tp}_{ik}) \partial^j_v \subset I(e)O_{e^\vee},$$
as claimed.
\end{proof}

To finish the proof of the theorem, we first observe that \mbox{$\cD_q(e)\cong S/(I(e) + I(e^\vee))$}, where \mbox{$S=T(\Mat(e)\oplus\Mat(e^\vee))/I(e,e^\vee)$}.  Every element of $S$ can be uniquely reduced to a sum $\sum C_{IJ} a_I\partial_J$, where $C_{IJ}\in\CC$, by relations $I(e,e^\vee)$, by straightforward application of the diamond lemma. Thus the multiplication $m:T(\Mat(e))\ot T(\Mat(e^\vee)) \to S$ is a linear isomorphism.
By the lemma, the two-sided ideal $\langle I(e)\ot 1 + 1\ot I(e^\vee)\rangle\subset S$ lies in the image under $m$ of the linear subspace $I(e)\ot T(\Mat(e^\vee))+ T(\Mat(e))\ot I(e^\vee)$, which implies that
$$m:T(\Mat(e))/I(e)\ot T(\Mat(e^\vee))/I(e^\vee)\to \cD_q(e)$$
is a linear isomorphism, as desired.
\end{proof}

\begin{corollary} The identification $\cO_q\cong \cD_q/(\sum_{e\in E}\Mat(e^\vee))\cD_q(e)$ as objects of $\cC$ makes $\cO_q(\Mat_\dd(Q))$ a $\cD_q(\Mat_\dd(Q))$-module in $\cC$, $q$-deforming the usual $\mathbb{G}^\dd$-equivariant action of $\cD(\Mat_\dd(Q))$ on $\cO(\Mat_\dd(Q))$.\end{corollary}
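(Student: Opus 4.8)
The plan is to deduce this from the PBW theorem (Theorem \ref{flatness}) by pinning down the quotient explicitly as the span of the ``$a$-only'' standard monomials. Write $\mathcal J\subset\cD_q$ for the left ideal generated by the derivative generators, i.e. by the sub-object $\bigoplus_{e\in E}\Mat(e^\vee)\subset\cD_q$ (this is what the displayed expression denotes, with left multiplication understood; as a two-sided ideal it would contain a nonzero scalar, by the cross relations in Definition \ref{Dqdefn}(4), hence be everything). Since each $\Mat(e^\vee)$ is a sub-object of $\cD_q$ in $\cC$ and left multiplication $\cD_q\ot\cD_q\to\cD_q$ is a morphism in $\cC$, the ideal $\mathcal J$ is automatically a sub-object of $\cD_q$ in $\cC$. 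It then suffices to: (i) show $\mathcal J$ is exactly the linear span of the standard monomials $a_I\partial_J$ with $J\neq\varnothing$; (ii) conclude $\cD_q/\mathcal J\cong\cO_q$ in $\cC$ and that it carries a $\cD_q$-module structure in $\cC$; (iii) check the $q\to 1$ degeneration.

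For (i): the inclusion $\supseteq$ is immediate, since writing $\partial_J=\partial_{J'}\,\partial(f)^o_p$ with $\partial(f)^o_p$ the last factor gives $a_I\partial_J=(a_I\partial_{J'})\cdot\partial(f)^o_p\in\cD_q\cdot\Mat(f^\vee)\subseteq\mathcal J$. For $\subseteq$, it is enough to see that right multiplication $x\mapsto x\cdot\partial(f)^o_p$ sends the span of all standard monomials into the span of those with nonempty $\partial$-part. Given $x=\sum_{I,J} c_{IJ}\,a_I\partial_J$ in PBW form (Theorem \ref{flatness}), we have $x\cdot\partial(f)^o_p=\sum_{I,J} c_{IJ}\,a_I\,(\partial_J\,\partial(f)^o_p)$, and the product $\partial_J\,\partial(f)^o_p$ lies in the subalgebra of $\cD_q$ generated by the $\partial$'s. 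By Definition \ref{Dqdefn}(1)--(3) the $\partial$'s satisfy only the $\cO_q$-type homogeneous quadratic relations for the doubled quiver, so straightening $\partial_J\,\partial(f)^o_p$ to normal form produces a linear combination of ordered monomials $\partial_{J''}$ with $|J''|=|J|+1\geq 1$ and with no $a$'s created; hence each $a_I\partial_{J''}$ is again standard with nonempty $\partial$-part. Note that this step never invokes the mixed relations $I(e,e^\vee)$: we only ever multiply on the right by $\partial$'s, so no ``lower order in $\partial$'' terms appear. This is the only point that requires real care.

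For (ii): by Theorem \ref{flatness} the standard monomials are a basis of $\cD_q$, so (i) gives a vector-space decomposition $\cD_q=\cO_q'\oplus\mathcal J$, where $\cO_q'$ is the span of the $a_I$. The subspace $\cO_q'$ is the image of the $\cC$-morphism $T\!\left(\bigoplus_{e\in E}\Mat(e)\right)\to\cD_q$, hence a sub-object of $\cC$, and — applying Theorem \ref{flatness} edgewise — it is a subalgebra isomorphic in $\cC$ to $\cO_q$, since the relations it inherits are precisely those of $\cO_q$ (Definition \ref{Dqdefn}(1) together with the braided tensor product structure). Thus the composite $\cO_q'\hookrightarrow\cD_q\twoheadrightarrow\cD_q/\mathcal J$ is a morphism in $\cC$ which is a linear isomorphism, hence an isomorphism in $\cC$. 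Since $\mathcal J$ is a left ideal and a sub-object, the left regular action $\cD_q\ot\cD_q\to\cD_q$ descends to $\cD_q\ot(\cD_q/\mathcal J)\to\cD_q/\mathcal J$, making $\cD_q/\mathcal J$ a $\cD_q$-module in $\cC$; transporting along the isomorphism equips $\cO_q$ with the asserted module structure.

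For (iii): setting $q=1$, $\cD_q$ degenerates to $\cD(\Mat_\dd(Q))$ with the $\partial$'s becoming the coordinate vector fields, $\mathcal J$ degenerates to the usual left ideal generated by those vector fields, and the module structure map $\cD_q\ot\cO_q\to\cO_q$ specializes to the standard action of differential operators on polynomial functions; since $\cC$ degenerates to the category of $\mathbb{G}^\dd$-equivariant vector spaces, this recovers the $\mathbb{G}^\dd$-equivariant action. Flatness of the resulting deformation of the module is automatic from flatness of $\cD_q$ together with the fact established in (i) that $\mathcal J$ is spanned by a subset of the PBW basis. As indicated, the one delicate point is step (i) — tracking why the left ideal is spanned by genuine standard monomials and why no stray lower-order terms escape that span; everything else is formal once Theorem \ref{flatness} is available.
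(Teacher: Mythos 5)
Your proof is correct, and since the paper states this as a corollary of Theorem~\ref{flatness} with no written argument, your PBW-based splitting is precisely the intended content. The heart of the matter is correctly isolated in step (i): the $\partial$-subalgebra is governed only by the homogeneous quadratic relations of Definition~\ref{Dqdefn}(1)--(3) (no occurrence of $I(e,e^\vee)$), so right-multiplying a PBW-normal element by a generator $\partial(f)^o_p$ never creates $a$'s or constants, and the left ideal is therefore spanned by the standard monomials with nonempty $\partial$-part; step (ii) then follows formally because left-multiplication and the inclusion $T(\bigoplus_e\Mat(e))\hookrightarrow\cD_q$ are morphisms in $\cC$. Your aside that the literal two-sided ideal would be all of $\cD_q$ is also right for any $Q$ with at least one non-loop edge, and your reading of the displayed expression as the \emph{left} ideal generated by $\bigoplus_e\Mat(e^\vee)$ is the only one consistent with making $\cO_q$ a left $\cD_q$-module as in the classical picture.

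Step (iii) is the one place where the proposal is a bit loose, and it is worth being explicit. ``Setting $q=1$'' does not literally recover $\cD(\Mat_\dd(Q))$: as Remark~\ref{teq1} and diagram~\eqref{Dqdiag} indicate, the passage to $\cD(\Mat_\dd(Q))$ is a quasi-classical limit with $q=e^\hbar$ and $t=\hbar$, while the strict $q=1$ specialization gives the commutative algebra $\cO(T^*\Mat_\dd(Q))$. In particular, for a loop edge the cross-relations of Definition~\ref{Dqdefn}(4) are homogeneous (there is no $\Omega$-term, and no $t$-parameter in $S(e,e^\vee)$), so in the quotient module $\cO_q$ the loop $\partial$'s act by zero for all $q$; the ``usual action'' is only seen at first order in $\hbar$ via the Rees filtration. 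None of this affects the algebraic content you actually prove in (i)--(ii), but if you want (iii) to be more than a sketch you should phrase the degeneration in the formal $\hbar$-setting, tracking the $t$-parameter for non-loop edges and acknowledging that the loop-edge compatibility is a statement about associated graded / first-order terms rather than a naive $q\to 1$ specialization.
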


\section{Independence of $\cD_q(\Mat_\dd(Q))$ on the orientation of $Q$}\label{q-Fourier}\glossary{$\mathcal{F}$}
The algebra of differential operators on a finite dimensional vector space, $V=\langle e_1,\ldots,e_n\rangle$ with dual basis $V^*=\langle f_1,\ldots, f_n\rangle$, has a Fourier transform automorphism $\mathcal{F}$, induced by the symplectomorphism on the symplectic vector space $V\oplus V^*$, $e_i \mapsto f_i$, $f_i\mapsto -e_i$.  In this section we show that a certain localization $\cD_q(e)^\circ$ of the edge differential operator algebras $\cD_q(e)$ admit analogous isomorphisms, which may be extended to (a localization $\cD_q(\Mat_d(Q))^\circ$ of) $\cD_q(\Mat_d(Q))$, by the identity on the other $\cD_q(f)$ subalgebras.  In particular, this implies that the algebra $\cD_q(\Mat_\dd(Q))$ does not depend on the orientation of $Q$, up to isomorphism.  The results of this section should also be compared to Section 2 of \cite{C-BS}, of which they are a quantization.

\subsection{Braided Fourier transform on $\cD_q(e)$ when $e$ is not a loop}
\subsubsection{Easy case: } $e=\overset{1}{\bullet}\to \overset{1}{\bullet}$.
We work this example out for the sake of clarity, before considering the general situation.  In this case, we have:

$$\cD_q(e) = \CC\langle\partial,a\rangle \Big/ \langle \partial q^{-1} a = aq\partial + 1\rangle.$$

We introduce the elements:
$$g^\alpha:=(1+(q-q^{-1})\partial a), \quad g^\beta:=(1+(q-q^{-1})a\partial).$$
\begin{proposition} We have the relations:
\begin{enumerate}
 \item $g^\alpha \partial = \partial g^\beta$,
 \item $g^\beta a = a g^\alpha$,
 \item $g^\alpha a = q^2a g^\alpha$,
 \item $g^\alpha\partial = q^{-2} \partial g^\alpha.$
\end{enumerate}
\end{proposition}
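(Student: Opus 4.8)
The cleanest route is to first extract a single scalar identity relating $g^\alpha$ and $g^\beta$, and then deduce all four relations formally. First I would put the defining relation $\partial q^{-1}a = aq\partial + 1$ into normal-ordered form, $\partial a = q^2 a\partial + q$. Substituting this into $g^\alpha = 1 + (q-q^{-1})\partial a$ and using $q(q-q^{-1}) = q^2 - 1$ gives
$g^\alpha = 1 + q^2(q-q^{-1})a\partial + (q^2-1) = q^2\bigl(1 + (q-q^{-1})a\partial\bigr) = q^2 g^\beta.$
This identity is special to the present one-dimensional case, but once it is in hand everything is immediate.

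Relations (1) and (2) need no computation whatsoever: before imposing any relation in the tensor algebra, associativity already gives $g^\alpha\partial = \partial + (q-q^{-1})\partial a\partial = \partial g^\beta$ and $g^\beta a = a + (q-q^{-1})a\partial a = a g^\alpha$, simply by expanding the definitions of $g^\alpha$ and $g^\beta$. So (1) and (2) hold in $\cD_q(e)$ a fortiori.

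Relations (3) and (4) then follow by combining the above. For (3): $g^\alpha a = q^2 g^\beta a = q^2 a g^\alpha$, using $g^\alpha = q^2 g^\beta$ and then (2). For (4): $g^\alpha\partial = \partial g^\beta = q^{-2}\partial g^\alpha$, using (1) and then $g^\beta = q^{-2}g^\alpha$.

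I do not anticipate a genuine obstacle here; the only ``idea'' is noticing $g^\alpha = q^2 g^\beta$. A reader who prefers not to invoke it can instead verify (3) and (4) by brute force — for (3), apply $\partial a = q^2 a\partial + q$ twice to push the $\partial$ in $g^\alpha a = a + (q-q^{-1})\partial a^2$ to the right of both $a$'s, collect the powers of $q$, and match the coefficients of $a^2\partial$ and of $a$ against those of $q^2 a g^\alpha$ (and similarly for (4)). This is routine, and it is this second, more hands-on style of computation that will be needed for the higher-dimensional generalization in the next subsection.
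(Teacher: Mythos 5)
Your proof is correct, and it is essentially the paper's argument: the paper's verification of (3) also passes through the rewriting $1 + (q-q^{-1})\partial a = q^2\bigl(1+(q-q^{-1})a\partial\bigr)$ (i.e., your identity $g^\alpha = q^2 g^\beta$) followed by an application of (2), and calls (1) and (2) self-evident for exactly the reason you give. You have merely isolated the scalar identity as an explicit lemma, which is a slight expositional tightening rather than a different route.
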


\begin{proof}
Items (1) and (2) are self-evident.  For (3), we compute:
\begin{align*} g^\alpha a &= (1+(q-q^{-1})\partial a) a\\
 &= (1 + (q-q^{-1})(q^2 a\partial + q))a\\
 &= q^2(1+(q-q^{-1})a\partial)a\\
 &= q^2a(1+(q-q^{-1})\partial a)\\
 &=q^2a g^\alpha,
\end{align*}
as desired.  The computation for (4) is similar to (3).
\end{proof}
\begin{remark} We note in passing that (3) and (4) are special cases of Corollary \ref{qcentral}, which is proven independently.
\end{remark} 
\begin{definition}We let $\cD_q(e)^\circ$ denote the non-commutative localization of $\cD_q(e)$ at the multiplicative Ore set $S:=\{g_\alpha^kg_\beta^l \,\, | \,\, k,l\in \ZZ_{\geq 0}\}$.  \end{definition}
\begin{definition-proposition}\label{easycasenoloop}There exists a unique isomorphism:
\begin{align*}\mathcal{F}: \cD_q(e)^\circ\to\cD_q({e^\vee})^\circ,\\
a \mapsto \partial,\quad \partial \mapsto -ag_\alpha^{-1}.\end{align*}
\end{definition-proposition} 

\begin{proof} Clearly we have a homomorphism $\mathcal{F}: T(\Mat(e)\oplus \Mat(e^\vee))\to \cD_q(e)^\circ$ given on generators as above.  We have only to check that the relations defining $\cD_q(e)$ are mapped to zero by $\mathcal{F}$.
We compute:
\begin{align*}
\mathcal{F}(\partial q^{-1} a - aq\partial - 1) &= -ag_\alpha^{-1}q^{-1}\partial  +\partial q a g_\alpha^{-1} - 1\\
&= -qa\partial g_\alpha^{-1}  + q\partial a g_\alpha^{-1} - 1\\
&= q(\partial a - a\partial)g_\alpha^{-1}- 1\\
&= (1+(q-q^{-1})\partial a)g_\alpha^{-1} - 1 =0,
\end{align*}
as desired.
\end{proof}

\subsubsection{General case: } $e=\overset{n}{\bullet}\to \overset{m}{\bullet}$.
Following the notation of Section \ref{RTTpres} equation \eqref{RTTnot}, we introduce the matrices:
$$g^\alpha:=(I + (q-q^{-1})DA), \quad g^\beta:=(I+(q-q^{-1})AD).$$ 
\begin{proposition}\label{manyrelns} We have the relations:
\begin{enumerate}
 \item $g^\alpha D = D g^\beta$,
 \item $g^\beta A = A g^\alpha$,
 \item $g^\alpha_1R^\beta D_2 = (R^\beta)_{21}^{-1}D_2g^\alpha_1,$
 \item $g^\beta_1(R^\alpha)_{21}^{-1}A_2 = R^\alpha A_2g^\beta_1$,
 \item $g^\beta_1D_2R^\alpha_{21} = D_2(R^\alpha)^{-1}g^\beta_1,$
 \item $g^\alpha_1 A_2(R^{\beta})^{-1} = A_2 R^\beta_{21}g^\alpha_1,$
 \item $g^\beta_1g^\alpha_2 = g^\alpha_2 g^\beta_1$.
\end{enumerate}
\end{proposition}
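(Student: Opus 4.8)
The plan is to verify the seven identities by direct computation in $\cD_q(e)$, using only the defining relations of Definition~\ref{Dqdefn} together with the QYBE and the Hecke relation for the $R$-matrices $R^\alpha$, $R^\beta$. The one-dimensional computation preceding Definition-Proposition~\ref{easycasenoloop} is the template, and the $\Omega$-term manipulations mirror those in the proof of Theorem~\ref{flatness}. These relations are precisely what is needed to make the multiplicative set generated by $g^\alpha$ and $g^\beta$ an Ore set, hence the subsequent localization $\cD_q(e)^\circ$ well-behaved.

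Relations (1) and (2) use no relation of $\cD_q(e)$: substituting $g^\alpha = I + (q-q^{-1})DA$ gives $g^\alpha D = D + (q-q^{-1})DAD = D\bigl(I + (q-q^{-1})AD\bigr) = Dg^\beta$, and symmetrically $g^\beta A = A + (q-q^{-1})ADA = Ag^\alpha$; these are pure rebracketings, as in the scalar case.

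Relations (3)--(6) are the braided upgrades of the scalar identities $g^\alpha a = q^2 a g^\alpha$ and $g^\alpha\partial = q^{-2}\partial g^\alpha$. To prove, say, (3), I would expand $g^\alpha_1 = I_1 + (q-q^{-1})(DA)_1$ and push the factor $R^\beta D_2$ through $(DA)_1$ one generator at a time, the available moves being the $AA$-relation $R^\alpha A_2 A_1 = A_1 A_2 R^\beta_{21}$, the $DD$-relation $R^\beta D_2 D_1 = D_1 D_2 R^\alpha_{21}$ (relation~(1) of Definition~\ref{RTTbqca} applied to $e^\vee$), and the cross-relation $D_2 (R^\alpha)^{-1} A_1 = A_1 R^\beta D_2 + \Omega$. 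Each use of the cross-relation spawns an $\Omega$-term; after inserting the intervening $R$-matrices and appealing to the QYBE, these should collapse by the Hecke relation in its two forms $R_{12} - R_{21}^{-1} = (q-q^{-1})\Omega$ and $(q-q^{-1})R_{12}\Omega + 1 - R_{12}R_{21} = 0$, exactly as in the simplification of the matrix $A$ in the proof of Theorem~\ref{flatness}, leaving a single residual term that is a multiple of the $AA$- or $DD$-relation and hence zero. A final use of (1) or (2) repositions $g^\alpha$ on the side on which it appears in the statement. Relations (4)--(6) are then proved by the same method, with the roles of $A$, $D$ and of the two $R$-matrices exchanged accordingly.

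Finally, (7), $g^\beta_1 g^\alpha_2 = g^\alpha_2 g^\beta_1$, can be obtained either directly by the same method, or by substituting $g^\alpha_2 = I_2 + (q-q^{-1})(DA)_2$ and using (4)--(6) together with (1)--(2) to commute $g^\beta_1$ past the $D$ and $A$ inside $g^\alpha_2$, checking that the $R$-matrix factors produced in the two orders agree. I expect the only genuine difficulty to be the bookkeeping of the $\Omega$-terms and of the placement of $R^\alpha$ versus $R^\beta$ on the tensor legs in (3)--(6): this is routine, in that it replays the cancellation already carried out in the proof of Theorem~\ref{flatness}, but it is lengthy and must be done with care.
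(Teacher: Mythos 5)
Your plan is correct and matches the paper's proof: items (1)--(2) are pure rebracketing, items (3)--(6) are verified by a direct push-through using the $DD$-, $AA$-, and cross-relations together with the Hecke identity $R - (q-q^{-1})\Omega = R_{21}^{-1}$ (in fact (3) is cleaner than your sketch anticipates — the $\Omega$-term is absorbed directly by the Hecke relation with no residual term to kill — and the QYBE is not actually needed), and (7) is verified by expanding the commutator $[g^\beta_1, g^\alpha_2]$ and applying the defining relations. Your alternative derivation of (7) from (4) and (5), namely $g^\beta_1 D_2 A_2 = D_2 (R^\alpha)^{-1} g^\beta_1 (R^\alpha_{21})^{-1} A_2 = D_2 A_2 g^\beta_1$, is a genuine shortcut the paper does not use and is slightly cleaner than the paper's direct computation.
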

 \begin{proof}
Items (1) and (2) are self-evident.  For (3), we compute:

\begin{align*}
g^\alpha_1 R^\beta D_2 &= (I+ (q-q^{-1})D_1A_1)R^\beta D_2\\
&= R^\beta D_2+ (q-q^{-1})D_1D_2(R^{\alpha})^{-1}A_1 - (q-q^{-1})D_1\Omega\\
&= R^\beta D_2 + (q-q^{-1})(R^\beta)_{21}^{-1}D_2D_1A_1 - (q-q^{-1})D_1\Omega\\
&= (R^\beta-(q-q^{-1})\Omega^\beta)D_2 + (q-q^{-1})(R^\beta)_{21}^{-1}D_2D_1A_1\\
&= (R^\beta)_{21}^{-1}D_2(I+(q-q^{-1})D_1A_1)\\
&= (R^\beta)_{21}^{-1}D_2 g^\alpha_1.
\end{align*}
Similar computations prove (4)-(6).  For (7), we compute:

\begin{align*} \frac{[g_1^\beta,g_2^\alpha]}{(q-q^{-1})^{2}} &= A_2D_2D_1A_1 - D_1A_1A_2D_2\\
&= A_2R_{21}^\beta \underbrace{(R^\beta)_{21}^{-1} D_2D_1}_{I(e^\vee)}A_1 - D_1A_1A_2D_2\\
&= \underbrace{A_2R_{21}^\beta D_1}_{I(e,e^\vee)}\underbrace{D_2 (R^\alpha)^{-1}A_1}_{I(e,e^\vee)} - D_1A_1A_2D_2\\
&= (D_1(R^\alpha)_{21}^{-1}A_2 -\Omega)( A_1R^\beta D_2 + \Omega) -D_1A_1A_2D_2\\
&= \underbrace{D_1(R^\alpha)_{21}^{-1}A_2A_1R^\beta D_2}_{\textrm{cancel this}} - \Omega A_1R^\beta D_2 + D_1(R^\alpha)_{21}^{-1}A_2\Omega - 1 - \underbrace{D_1A_1A_2D_2}_{\textrm{with this}}\\
&= \Omega(D_2(R^\alpha)^{-1}A_1 -A_1R^\beta D_2 - \Omega)\\
&=0.
\end{align*}

\end{proof}

\begin{definition}  We let $\cD_q(e)^\circ$ denote the non-commutative localization of $\cD_q(e)$ at the quantum determinant $\det_q$ of the matrices $g_\alpha$ and $g_\beta$.
\end{definition}
In Section \ref{q-moment}, Corollary \ref{qcentral} (which is independent of the present section), we prove that the powers of $\det_q$ form a multiplicative Ore set in $\cD_q(e)$, so that the localization is straightforward to construct  (in particular, $\cD^\circ_q(e)$ gives rise to a flat deformation of the localized cotangent bundle to $\Mat_\dd(e)$, which appears in \cite{C-BS}.

\begin{definition-proposition}\label{gencasenoloop} There exists a unique isomorphism:
\begin{align*}\mathcal{F}: \cD_q(e)^\circ \to \cD_q(e^\vee)^\circ,\\
 A\mapsto D, D\mapsto -A(g^{\alpha(e^\vee)})^{-1}.
\end{align*}
\end{definition-proposition}

\begin{proof}
Clearly we have a homomorphism $\mathcal{F}:T(\Mat(e)\oplus\Mat(e^\vee))\to \cD(e)^\circ$ given on generators by the above formula.  We have to check that the relations defining $\cD_q(e)$ are mapped to zero by $\mathcal{F}$. In the formulas below, for each edge $e\in E$, and its adjoint edge $e^\vee \in E^\vee,$ we abbreviate $\alpha=\alpha(e)=\beta(e^\vee)=\beta^\vee$, $\beta=\beta(e)=\alpha(e^\vee)=\alpha^\vee$.  We first compute the image of the relations between the $a(e)^i_j$:

$$\mathcal{F}(R^\alpha A_2A_1 - A_1A_2R^\beta_{21}) = R^{\beta^\vee} D_2D_1 - D_1D_2 R_{21}^{\alpha^\vee} = 0.$$

Next, we compute the image of the relations between the $\partial(e)^i_j$:

\begin{align*}S&:=\mathcal{F}(R^\beta D_2D_1 - D_1D_2 R^\alpha_{21})\\
&=R^{\alpha^\vee} \underbrace{A_2(g^{\alpha^\vee}_2)^{-1}}_{\textrm{\ref{manyrelns} (2)}}A_1(g^{\alpha^\vee}_1)^{-1}  - A_1(g^{\alpha^\vee}_1)^{-1}\underbrace{A_2(g^{\alpha^\vee}_2)^{-1}}_{\textrm{\ref{manyrelns} (2)}}R_{21}^{\beta^\vee}\\
&=R^{\alpha^\vee} (g^{\beta^\vee}_2)^{-1}\underbrace{A_2A_1}_{I(e)}(g^{\alpha^\vee}_1)^{-1}  - A_1(g^{\alpha^\vee}_1)^{-1}(g^{\beta^\vee}_2)^{-1}A_2 R_{21}^{\beta^\vee}\\
&=\underbrace{R^{\alpha^\vee} (g^{\beta^\vee}_2)^{-1}R^{\alpha^\vee}_{21}A_1}_{\ref{manyrelns}\, (5)}\underbrace{A_2(R^{\beta^\vee})^{-1} (g^{\alpha^\vee}_1)^{-1}}_{\ref{manyrelns}\,(6)}  - A_1(g^{\beta^\vee}_1)^{-1}(g^{\alpha^\vee}_2)^{-1}A_2 R_{21}^{\beta^\vee}\\
&=A_1g_2^{-1}g_1^{-1}A_2R^\alpha_{21}  - A_1g_1^{-1}g_2^{-1}A_2 R_{21}^{\alpha}\\
&=0, \end{align*}
by part (7) of Proposition \ref{manyrelns}.

Finally, to compute the image, $\mathcal{F}(D_2R^{-1}A_1-A_1RD_2 - \Omega)$, of the cross relations, we flip tensor factors, and compute:
\begin{align*}\mathcal{F}(D_1(R_{21})^{-1}A_2-A_2R_{21}D_1 - \Omega)
&=-A_1\underbrace{g_1^{-1}R_{21}^{-1}D_2}_{\ref{manyrelns}\,(3)} + D_2R_{21}A_1g_1^{-1} - \Omega\\
&= (D_2R_{21}A_1-\underbrace{A_1RD_2}_{I(e,e^\vee)})g_1^{-1} - \Omega\\
&= (\Omega + D_2(\underbrace{R_{21}-R^{-1}}_{\textrm{Hecke reln.}})A_1)g_1^{-1} - \Omega\\
&= (\Omega + D_2(q-q^{-1})\Omega A_1)g_1^{-1} - \Omega\\
&=\Omega\left((I+(q-q^{-1})D_1A_1)g_1^{-1} - I\right)\\
&=0,
\end{align*}
by definition of  $g_1$.  
\end{proof}
\subsection{Braided Fourier transform on $\cD_q(e)$ when $e$ is a loop}
\subsubsection{Easy case:} $e=\overset{1}{\bullet}\rightloop$.  We again consider the $d_v=1$ case first for the sake of clarity, before moving on to the general situation.  In this case, we have:
$$D_q(e)=\CC\langle \partial, a\rangle \Big/ \langle a\partial = q^2\partial a\rangle.$$
\begin{definition} We let $D_q(e)^\circ$ denote the noncommutative localization at the multiplicative Ore set $S:=\{a^k\partial^l \,\,|\,\, k,l\in \ZZ_{\geq 0}\}$.
\end{definition}

\begin{definition-proposition}\label{easycaseloop} There exists a unique homomorphism:
$$\mathcal{F}:D_q(e)^\circ \to D_q(e^\vee)^\circ,$$
$$a \mapsto \partial, \partial\mapsto \partial a^{-1}\partial^{-1}.$$
Moreover, $\mathcal{F}$ is an isomorphism.
\end{definition-proposition}
\begin{proof} Clearly we have a homomorphism $\mathcal{F}:T(\Mat(e)\oplus\Mat(e^\vee))\to \cD(e)^\circ$ given on generators by the formulas above.  We have to check that the relations defining $\cD_q(e)$ are mapped to zero by $\mathcal{F}$.  We compute:

$$\mathcal{F}(a\partial - q^2\partial a) = \partial(\partial a^{-1}\partial^{-1}) - q^2 (\partial a^{-1} \partial^{-1})\partial= q^2\partial a^{-1}-q^2\partial a^{-1}=0.$$
\end{proof}

\subsubsection{General case:} $e=\overset{n}{\bullet}\rightloop$.
\begin{definition} We let $D_q(e)^\circ$ denote the non-commutative localization at the quantum determinant $\operatorname{det}_q$ of the matrices $D$ and $A$.
\end{definition}

It is well-known that the powers of $\operatorname{det}_q$ form a multiplicative Ore set in $\cD_q(e)$, so that the localization is straightforward.

\begin{definition-proposition}\label{gencaseloop}
There exists a unique isomorphism:
$$\mathcal{F}: \cD_q(e)^\circ\to\cD_q(e)^\circ,$$
$$A \mapsto D, D\mapsto DA^{-1}D^{-1}.$$
\end{definition-proposition}
\begin{proof}
Clearly we have a homomorphism $\mathcal{F}:T(\Mat(e)\oplus\Mat(e^\vee))\to \cD(e)^\circ$ given on generators by the formulas above.  We have to check that the relations defining $\cD_q(e)$ are mapped to zero by $\mathcal{F}$.  Clearly the relations between the $a(e)^i_j$ are sent to zero, as $\mathcal{F}(A)=D$ still satisfies the reflection equations.  We compute the image of the relations between the $\partial(e)^i_j$.

\begin{align*}\mathcal{F}(D_2R_{21}D_1R) &= D_2A_2^{-1}D_2^{-1}R_{21}D_1A_1^{-1}D_1^{-1}R\\
&= D_2A_2^{-1}\underbrace{D_2^{-1}R_{21}D_1R}_{I(e^\vee)}R^{-1}A_1^{-1}D_1^{-1}R\\
&= D_2\underbrace{A_2^{-1}R_{21}D_1R}_{I(e,e^\vee)}\underbrace{D_2^{-1}R^{-1}A_1^{-1}}_{I(e,e^\vee)}D_1^{-1}R\\
&= D_2R_{21}D_1\underbrace{R_{21}^{-1}A_2^{-1}R^{-1}A_1^{-1}}_{I(e)}\underbrace{R_{21}^{-1}D_2^{-1}R^{-1}D_1^{-1}R}_{I(e^\vee)}\\
&= \underbrace{D_2R_{21}D_1R}_{I(e^\vee)}R^{-1}A_1^{-1}\underbrace{R_{21}^{-1}A_2^{-1}R^{-1}D_1^{-1}R_{21}^{-1}}_{I(e,e^\vee)}D_2^{-1}\\
&= R_{21}D_1\underbrace{RD_2R^{-1}A_1^{-1}}_{I(e,e\vee)}D_1^{-1}R_{21}^{-1}A_2^{-1}D_2^{-1}\\
&= R_{21}D_1A_1^{-1}\underbrace{R_{12}D_2R_{21}D_1^{-1}R_{21}^{-1}}_{I(e^\vee)}A_2^{-1}D_2^{-1}\\
&= R_{21}D_1A_1^{-1}D_1^{-1}R_{12}D_2A_2^{-1}D_2^{-1}\\
&= \mathcal{F}(R_{21}D_1RD_2).
\end{align*}

Finally, we compute the image of the cross relations.  We find:
\begin{align*}\mathcal{F}(A_1RD_2)
&=\underbrace{D_1R D_2}_{I(e^\vee)}A_2^{-1}D_2^{-1}\\
&=R D_2\underbrace{RD_1R^{-1}A_2^{-1}}_{I(e,e^\vee)}D_2^{-1}\\
&=R D_2A_2^{-1}\underbrace{R_{21}D_1RD_2^{-1}}_{I(e,e^\vee)}\\
&=R D_2A_2^{-1}D_2^{-1}R_{21}D_1R\\
&= \mathcal{F}(RD_2R_{21}A_1R).
\end{align*}\edit{Need to check that map preserves Ore sets.}
\end{proof}

\subsection{Independence of $\cD_q(\Mat_\dd(Q))^\circ$ on the orientation of $Q$.}
For a quiver $Q$, and $e\in E$, let $\tau_e(Q)$ denote the quiver obtained from $Q$ by reversing the orientation of $e$.

Let $Q_1$ and $Q_2$ be quivers whose underlying undirected graphs are isomorphic.  Choose an isomorphism, by which we can identify the sets $V_1$, $V_2$ of vertices, and $\widetilde{E}_1$, $\widetilde{E}_2$ of undirected edges.
We have the following:

\begin{theorem}
Let $e_1,\ldots, e_n$ be a sequence of edges of $Q_1$, such that $\tau_{e_n}\cdots\tau_{e_1}(Q_1)\cong Q_2$ as oriented graphs.  Then there is an induced isomorphism,
$$\cD_q(\Mat_\dd(Q_1))^\circ \cong \cD_q(\Mat_\dd(Q_2))^\circ.$$
\end{theorem}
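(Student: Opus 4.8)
The plan is to reduce the statement about quivers to the single-edge results already established. First I would note that it suffices to treat the case of a single edge reversal, i.e. $Q_2 = \tau_e(Q_1)$ for one edge $e\in E_1$, since the general case follows by composing the isomorphisms obtained for the sequence $\tau_{e_1}(Q_1), \tau_{e_2}\tau_{e_1}(Q_1),\ldots$. So fix $e\in E_1$, and let $Q_2$ be obtained by reversing $e$; under the identification of underlying graphs, the edge $e^\vee\in \overline{E_1}$ plays the role of $e\in E_2$ (up to relabelling), and the doubled quivers $\overline{Q_1}$ and $\overline{Q_2}$ are canonically identified.

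The key construction is then to take the edge Fourier transform on the single edge $e$ and extend it by the identity on all other edge algebras. Concretely, recall that $\cD_q(\Mat_\dd(Q_1))^\circ$ is built as a braided tensor product $\overrightarrow{\bigotimes_{f\in E_1}} \cD_q(f)^\circ$, and similarly for $Q_2$, where the only edge algebra that differs is the one at $e$: in $Q_1$ it is $\cD_q(e)^\circ$ and in $Q_2$ it is $\cD_q(e^\vee)^\circ$. On this factor I would apply the isomorphism $\mathcal{F}\colon \cD_q(e)^\circ \xrightarrow{\sim} \cD_q(e^\vee)^\circ$ from Definition-Proposition \ref{gencasenoloop} (if $e$ is not a loop) or Definition-Proposition \ref{gencaseloop} (if $e$ is a loop), and on every other factor $\cD_q(f)^\circ$, $f\neq e$, the identity. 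The claim is that this assembles to an algebra isomorphism $\widehat{\mathcal{F}}\colon \cD_q(\Mat_\dd(Q_1))^\circ \to \cD_q(\Mat_\dd(Q_2))^\circ$.

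To justify this, I would verify that $\widehat{\mathcal{F}}$ respects the cross-relations between edges — the relations listed in part (2) of Definition \ref{RTTbqca} and items (3) of Definition \ref{Dqdefn}. Edges sharing no vertex with $e$ commute trivially on both sides and there is nothing to check. For an edge $f$ sharing a vertex with $e$, one must check that the $\widehat{\mathcal{F}}$-images of the $Q_1$ cross-relations between the generators $A^e, D^e$ and $A^f, D^f$ are precisely the $Q_2$ cross-relations; since $\mathcal{F}$ sends $A^e\mapsto D^e$ and $D^e\mapsto -A^e(g^{\alpha})^{-1}$ (or the loop analog), and since reversing $e$ exactly swaps the roles of head and tail — turning "$\rightarrow$" patterns into "$\leftarrow$" patterns in Definition \ref{RTTbqca}(2) — this is a matter of matching the two tables of cross-relations edge-configuration by edge-configuration. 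The equivariance of $\mathcal{F}$ (it is built from braiding morphisms and evaluation) guarantees that it is a morphism in $\cC$, so the braided-tensor-product multiplication is respected; and because $\mathcal{F}$ is invertible with inverse again given by a Fourier-type formula, $\widehat{\mathcal{F}}$ is an isomorphism with inverse obtained by reversing $e$ back.

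The main obstacle I anticipate is the bookkeeping in the last step: confirming that the cross-relations transform correctly under reversing a single edge. One has to be careful about the ordering chosen on $\overline{E}$ (which enters the braided tensor product) and about the fact that reversing $e$ changes which of $e, e^\vee$ is the "positive" edge, hence changes $\epsilon(e)$ and the placement of $R$ versus $R^{-1}$ and $R_{21}$ in the relations. I would handle this by choosing the ordering on $\overline{E_2}$ compatibly with that on $\overline{E_1}$ (placing the reversed edge in the same slot), and then checking that for each of the finitely many local configurations around a shared vertex $v$, the substitution $A^e\leftrightarrow D^e$ together with the Fourier correction factor $g^{\alpha}$ carries the $Q_1$-relation to the $Q_2$-relation; the single-edge computations in the proofs of Definition-Propositions \ref{gencasenoloop} and \ref{gencaseloop}, particularly the identities of Proposition \ref{manyrelns} relating $g^\alpha, g^\beta$ to the $R$-matrices, are exactly what make these checks go through. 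Since $Q$ is connected but otherwise arbitrary, one should remark that only the \emph{local} structure at the endpoints of $e$ matters, so finitely many cases suffice.
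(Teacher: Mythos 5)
Your approach is the paper's: reduce to a single edge reversal, then take the edge Fourier transform $\mathcal{F}$ on the reversed edge and the identity on all other edge algebras. However, you substantially over-estimate the amount of verification remaining in the last step. The cross-relations between distinct edge algebras are not extra imposed relations: by Definitions \ref{bqca} and \ref{bqa} the algebra $\cD_q(\Mat_\dd(Q))$ is, by definition, the braided tensor product of its edge algebras $\cD_q(e)$, and the explicit tables in part (2) of Definition \ref{RTTbqca} and part (3) of Definition \ref{Dqdefn} are merely those braided tensor product relations written out in $RTT$ coordinates. Once one knows $\mathcal{F}:\cD_q(e)^\circ\to\cD_q(e^\vee)^\circ$ is a morphism of algebras \emph{in $\cC$} (which you correctly observe it is, being built from braidings and evaluations), the map $\widehat{\mathcal{F}}=\id\otimes\cdots\otimes\mathcal{F}\otimes\cdots\otimes\id$ between braided tensor products is automatically an algebra isomorphism, because any morphism in $\cC$ commutes with the braiding that generates the cross-relations. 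There is nothing left to check, and the configuration-by-configuration matching of the two tables that you describe as ``the main obstacle'' simply does not arise — this is exactly the observation that lets the paper's proof be a single sentence. Your concern about compatibility of orderings on $\overline{E}_1$ and $\overline{E}_2$ is likewise already handled: the remark following Definition \ref{bqa} notes that $\cD_q$ is independent of the ordering on $E$ up to canonical isomorphism via the braiding, so one may freely place the reversed edge in the same slot.
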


\begin{proof}
Clearly, it suffices to assume that the orientations on $Q_1$ and $Q_2$ differ at exactly one edge.  In this case, the isomorphism $\cD_q(e)\to \cD_q(e^\vee)$ constructed in the previous section can be extended to an isomorphism $\cD_q(\Mat_\dd(Q_1))\to\cD_q(\Mat_\dd(Q_2))$, as the relations between $\cD_q(e)$ (resp, $\cD_q(e^\vee)$) and the rest of $\cD_q(\Mat_\dd(Q_1))$ (resp, $\cD_q(\Mat_\dd(Q_2))$) are just the tensor product relations, which are preserved by $\mathcal{F}$, which is a morphism in $\cC$.
\end{proof}

\section{Construction of the $q$-deformed quantum moment map}\label{q-moment}
In this section we construct the $q$-analog of the moment map in the classical geometric construction of the quiver variety.
\subsection{Bialgebras and Hopf algebras in braided tensor categories}
We recall some basic constructions involving Hopf algebras in braided tensor categories, which we will use later.
\begin{definition} A bialgebra in $\cC$ is a 5-tuple,
$$(A\in\cC,\,\,\mu:A\ot A\to A,\,\,\eta:\mathbf{1}\to A,\,\,\Delta: A\to A\ot A,\,\, \epsilon:A\to\mathbf{1}),$$
such that $(A,\mu,\eta)$ is a unital algebra in $\cC$, $(A,\Delta,\epsilon)$ is a co-unital coalgebra in $\cC$, $\Delta$ is a homomorphism to the tensor product algebra $A\ot A$.  Homomorphisms are defined in the obvious way, and we denote by $\cC$-biAlg the category of bialgebras in $\cC$.
\end{definition}
\begin{definition} A Hopf algebra in $\cC$ is a bialgebra in $\cC$, with a (necessarily unique) convolution inverse $S$ to the identity, called the antipode: either composition,
$$S\ast \id: A \xrightarrow{\Delta} A\ot A \xrightarrow{S\ot \id} A\ot A \xrightarrow{\mu} A,$$
$$\id\ast S: A \xrightarrow{\Delta} A\ot A \xrightarrow{\id \ot S} A\ot A \xrightarrow{\mu} A,$$
coincides with the convolution unit $\eta\circ\epsilon:A\to A$.  We define the category \mbox{$\cC-$Hopf-Alg} as the full subcategory of $\cC$-biAlg consisting of bialgebras with antipode.
\end{definition}

Let $H$ be a Hopf algebra (in $\Vect$), $A$ be an algebra, and $\phi:H\to A$ be a homomorphism of algebras.  To simplify notation, we omit the explicit application of $\phi$ here and in the definitions to follow.  $H$ acts on $A$ via the induced adjoint action, $h\rhd a= h_{(1)}aS(h_{2}) \in A$.  For $\cC$-Hopf-Alg, there is an analogous construction:

\begin{definition}  Let $H\in\cC$-Hopf-Alg, and let $A\in \cC$-Alg.  Let $\phi:H\to A$ be a homomorphism of $\cC$-algebras.  The regular action of $H\ot H$ on $A$ is defined by:
$$\operatorname{act}_2:H \ot H\ot A \xrightarrow{\id\ot \sigma_{H,A}} H\ot A\ot H \xrightarrow{\id_H\ot\id_A\ot S} H\ot A\ot H \xrightarrow{\mu\circ(\id\ot\mu)} A.$$
The adjoint action of $H$ on $A$ is given by 
$$\ad: H\ot A\xrightarrow{\Delta\ot \id} H\ot H\ot A \xrightarrow{\operatorname{act}_2} A.$$
\end{definition}
It is a standard exercise to check that these are indeed actions, i.e. that
$$\ad\circ(\mu_H\ot\id_A) = \ad\circ(\id_H\ot\ad):H\ot H\ot A\to A.$$

\subsection{Hopf algebra of matrix coefficients}
For a locally finite braided tensor category $\cD$, we have its algebra $A(\cD)$ of matrix coefficients, whose general construction dates back to work of Lyubashenko and Majid \cite{LyMa}, \cite{Ma}.  We recall the construction here.

We have the functor of tensor product,\glossary{$\mathcal{T}$}
$$\mathcal{T}:\cD\bt\cD \to \cD,$$ $$V\bt W\mapsto V\ot W.$$
The braiding endows $\mathcal{T}$ with the structure of a tensor functor:
$$J: \mathcal{T}(X\bt U)\ot \mathcal{T}(V\bt W) = X\ot U\ot V \ot W \xrightarrow{\sigma_{U,V}}X\ot V\ot U \ot W = \mathcal{T}((X\bt U)\ot (V\bt W)).$$
$\mathcal{T}$ has a right adjoint $\mathcal{T}^\vee$ taking values in the Ind-category of $\cD\bt\cD$.  We define $A(\cD):= \mathcal{T}^\vee (\mathbf{1_\cD})$, and call it the algebra of matrix coefficients (for reasons which will become clear below).  $A(\cD)$ is thus defined uniquely, up to canonical isomorophism, as the representing object for the functor of co-invariants,
 $$\Hom_{\cD\bt\cD}(-\bt -,A(\cD))\cong \Hom_{\cD}(-\ot-,\mathbf{1}).$$
This description allows us to construct $A(\cD)$ explicitly as an Ind-algebra in $\cD\bt\cD$.  We let $\widetilde{A}(\cD)$ be the sum over all objects of $\cD$,
$$\widetilde{A}(\cD):=\bigoplus_{V\in\cD} V^*\bt V,$$
and let $A(\cD)$ be the quotient $\widetilde{A}(\cD)/Q$, where $Q$ denotes the sum over all morphisms,
$$Q:= \sum_{\phi:V\to W} \operatorname{im}(\Delta_\phi) \subset \widetilde{A}(\cD), \textrm{ where }$$
$$\Delta_\phi := (\id\bt\phi - \phi^*\bt \id):W^*\bt V\to W^*\bt W\oplus V^*\bt V.$$

To see that $A(\cD)$ does indeed satify the desired universal property, we observe that we have natural isomorphisms:
$$\Hom_{\cD}(X\bt Y,A(\cD))\cong \Hom(X,Y^*) \cong \Hom(X\ot Y,\mathbf{1}),$$
because we can write any morphism $\phi\in\Hom(Y,V)$ as $\phi\circ\id_Y,$ and can then apply the relations of $Q$ to reduce the sum over all $V$ to the single summand $V=Y$.

We have natural morphisms $i_V: V^*\bt V\to A$, and also $\mathcal{T}(i_v):V^*\ot V\to \mathcal{T}(A)$, for all $V\in \cC$.  We will abuse notation and call $\mathcal{T}(i_v)$ simply by $i_V$ when context is clear.

The algebra structure on $A$ is given on generating objects $V^*\bt V$, $W^*\bt W$ by
$$(V^*\bt V)\ot_2 (W^*\bt W) = V^*\ot W^*\bt V\ot W \xrightarrow{\sigma_{V^*,W^*}} W^*\ot V^*\bt V\ot W \xrightarrow{i_{V\ot W}} A.$$
The algebra structure on $\mathcal{T}(A)$\glossary{$\mathcal{T}(A)$} is given on generating objects $V^*\ot V$, $W^*\ot W$ by
$$ (V^*\ot V)\ot (W^*\ot W) \xrightarrow{\sigma_{(V^*\ot V),W^*}} (W^*\ot V^*)\ot (V\ot W) \xrightarrow{i_{V\ot W}} \mathcal{T}(A).$$
The unit of $A$, (resp. $\mathcal{T}(A)$) is the subspace $\mathbf{1}^*\bt \mathbf{1}$ (resp. $\mathbf{1}\cong \mathbf{1}^*\ot\mathbf{1}$).

\begin{remark}
The adjoint pair of functors ($\mathcal{T},\mathcal{T}^\vee)$ are braided tensor categorical analogs of the restriction and induction functors, $(\operatorname{Res}^{G\times G}_G,\operatorname{Ind}^{G\times G}_G)$, of finite groups, and the construction given above is analogous to constructing the $G-G$-bimodule $\CC[G]$ as $\operatorname{Ind}^{G\times G}_G \CC$.  
\end{remark}

\begin{remark}
In case $\cD=U$-mod, for some quasi-triangular Hopf algebra $H$, the algebra $A(\cD)$ identifies as a vector space with the subspace of $H^*$ spanned by functionals $c_{f,v},$ for  $v\in V, f\in V^*$ defined by $c_{f,v}(h):=f(hv)$.  Choosing a basis $v_1,\ldots v_n$ and its dual basis $f_1,\ldots, f_n$, one has the functionals $c_{f_i,v_j}(h)$, which are the $i,j$th matrix entry of the map $H\to \Mat_n(\CC)$ of the representation $V$.
\end{remark}

\begin{definition} $\mathcal{T}(A(\cD))$ becomes a Hopf algebra in $\cD$ with coproduct, counit, and antipode defined on each subspace $V^*\ot V$ by:
$$\Delta|_{V^*\ot V}: V^*\ot V \xrightarrow{\id\ot\coev\ot\id} V^*\ot V\ot V^*\ot V \xrightarrow{i_V\ot i_V} \mathcal{T}(A(\cD))\ot \mathcal{T}(A(\cD)),$$
$$\epsilon|_{V^*\ot V}: V^*\ot V \xrightarrow{\ev} \mathbf{1},$$
$$S|_{V^*\ot V}:V^*\ot V \xrightarrow{\sigma_{V^*\ot V}} V\ot V^* \xrightarrow{\theta_V\ot \id} V^{**}\ot V^* \xrightarrow{i_{v_{^*}}} \mathcal{T}(A(\cD)).$$
\end{definition}

For the category $\cC:=\bt_{v\in V}\cC_v$ of Section \ref{sec:QuivNot}, we have $A(\cC):=\bt_{v\in V}A(\cC_v)$, and $\mathcal{T}(A(\cC)):=\bt_{v\in V}\mathcal{T}(A(\cC_v))$, which becomes a $\cC$-Hopf algebra with structure morphisms defined diagonally.\glossary{$A(\cC)$, $\mathcal{T}(A(\cC))$}

\subsubsection{The quantum determinant $\det_q$}
When $\cD$ is the braided tensor category of type I $U_q(\mathfrak{gl}_N)$-modules, the algebra $\mathcal{T}(A)$ of the previous section contains a central element called the quantum determinant.  It is defined as follows:\glossary{$\det_q$}

\begin{definition} The quantum determinant $\det_q$ is the unique generator of the one-dimensional subspace $\mathcal{T}((\Lambda^N_q(\CC^N))^*\bt \Lambda^N_q(\CC^N))$, with normalization $ev_{\Lambda^N_q}(\det_q)=1$.
\end{definition}

\begin{proposition}
The element $\det_q$ is central and group-like in $\mathcal{T}(A(\mathcal{D}))$.
\end{proposition}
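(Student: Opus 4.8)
The plan is to obtain the group-like property directly from the definitions and then to reduce centrality to a braiding computation that becomes elementary because $\Lambda^N_q(\CC^N)$ is one-dimensional.

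\emph{Group-like.} Fix a basis vector $\omega$ of $L:=\Lambda^N_q(\CC^N)$ and the dual vector $\omega^*\in L^*$; the normalization $\ev_{\Lambda^N_q}(\det_q)=1$ is exactly the statement that $\det_q=i_L(\omega^*\ot\omega)$ with $\langle\omega^*,\omega\rangle=1$. Since $\epsilon|_{L^*\ot L}=\ev_L$, this reads $\epsilon(\det_q)=1$; and since $\dim L=1$ forces $\coev_L(1)=\omega\ot\omega^*$, the coproduct formula gives $\Delta(\det_q)=(i_L\ot i_L)(\omega^*\ot\omega\ot\omega^*\ot\omega)=\det_q\ot\det_q$. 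Together with the Hopf axioms of $\mathcal{T}(A(\cD))$ these force $\det_q$ to be invertible with inverse $S(\det_q)$, so it is group-like.

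\emph{Centrality.} As $\mathcal{T}(A(\cD))$ is spanned by the images $i_W(f\ot w)$ over objects $W$ of $\cD$, $f\in W^*$, $w\in W$, it suffices to verify $\det_q\cdot i_W(f\ot w)=i_W(f\ot w)\cdot\det_q$. Unwinding the multiplication formula, the left-hand side is $i_{L\ot W}$ of a braided rearrangement of $\omega^*\ot\omega\ot f\ot w$ and the right-hand side is $i_{W\ot L}$ of a braided rearrangement of $f\ot w\ot\omega^*\ot\omega$. These land in the images of $i_{L\ot W}$ and $i_{W\ot L}$ respectively, which are matched by the defining relations $Q$ of $A(\cD)$ applied to the braiding isomorphism $\sigma_{W,L}^{-1}\colon L\ot W\xrightarrow{\sim}W\ot L$ (i.e. $i_{W\ot L}(\eta\ot\phi\zeta)=i_{L\ot W}(\phi^{*}\eta\ot\zeta)$ with $\phi=\sigma_{W,L}^{-1}$). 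The point that makes the computation elementary is that, $L$ being one-dimensional, the braidings $\sigma_{L,-}$ and $\sigma_{-,L}$ are the tensor flip composed with the action of the group-like elements $u:=(\chi_L\ot\id)(R)$ and $v:=(\id\ot\chi_L)(R)$ of $U_q(\mathfrak{gl}_N)$, where $\chi_L$ is the character afforded by $L$; for the top quantum exterior power only the Cartan part of $R$ survives, so $u=v$ (equal, up to convention, to the central group-like $\prod_i K_i$), while $(S\ot\id)(R)=R^{-1}$ gives $(\chi_{L^*}\ot\id)(R)=u^{-1}$. Carrying these through, $\det_q\cdot i_W(f\ot w)$ reduces to $i_{L\ot W}(f\ot\omega^*\ot\omega\ot w)$ and then, via the $Q$-relation above, to $i_{W\ot L}(\omega^*\ot\rho_{W^*}(u^{-1})f\ot\rho_W(u^{-1})w\ot\omega)$, which is precisely $i_W(f\ot w)\cdot\det_q$.

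I expect the only genuine difficulty to be the bookkeeping in this last step: one must keep straight the identification $(X\ot Y)^*\cong Y^*\ot X^*$, the antipode twist entering the transpose representation $\rho_{W^*}$, and the direction of the $Q$-relation, so that the residual braiding factors on the two sides cancel rather than accumulating into the non-trivial monodromy $\sigma_{W,L}\sigma_{L,W}$. To sidestep this I would, if desired, instead invoke the identification of $\mathcal{T}(A(\cD))$ with the reflection equation algebra (Proposition \ref{Oeflat} together with Example \ref{JordanQuiver}) --- equivalently, via transmutation, with the locally finite subalgebra $U'$ of Theorem \ref{lfdesc} --- under which $\det_q$ is carried to $l^{-1}_1\cdots l^{-N}_N$, a group-like element that is already central in $U_q(\mathfrak{gl}_N)$; centrality of $\det_q$ in $\mathcal{T}(A(\cD))$ then follows once one checks that these distinguished one-dimensional subspaces correspond.
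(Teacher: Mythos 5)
Your proposal is correct and follows essentially the same route as the paper: the group-like property is read off from the one-dimensionality of $\Lambda^N_q(\CC^N)$ and the coproduct formula, and centrality is reduced to a braiding computation exploiting the fact that $\sigma_{L,-}$ and $\sigma_{-,L}$ are flips composed with the action of the central group-like $u$. Your version is more careful than the paper's two-line proof --- in particular you correctly observe that the monodromy $\sigma_{W,L}\sigma_{L,W}$ is \emph{not} trivial, and that what actually closes the argument is the antipode identity $S(u)=u^{-1}$ feeding into the $Q$-relation --- and the alternative via the Rosso isomorphism onto $U'$ and the known centrality of $\det_q$ in $U_q(\mathfrak{gl}_N)$ is also a valid shortcut.
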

\begin{proof}
That $\det_q$ is group-like is clear from the fact that $\Lambda^N_q(\CC^N))$ is one-dimensional.  It's centrality follows from the fact that $\sigma_{\Lambda^N_q(\CC^N)),V}$ is a scalar matrix for any $V$.
\end{proof}

\begin{remark}
The construction of $\det_q$ above does not yield a clear formula for $\det_q$ in terms of the standard generators $a^i_j$.  We have been unable to find such a formula in the literature, for the reflection equation algebra, although a formula for the corresponding element in the FRT algebra is well-known.
\end{remark}

\subsubsection{Explicit presentation of $\mathcal{T}(A(\cC_v))$.}
We have the following well-known presentation for $\mathcal{T}(A(\cC_v))$.
\begin{theorem} We have an isomorphism:
$$\mathcal{T}(A(\cC_v)) \cong \mathcal{T}(A(\cC_v))^+[(\operatorname{det}_q)^{-1}], \textrm{ where }$$
\begin{equation}\label{refleqns}\mathcal{T}(A(\cC_v))^+:=\langle l^i_j,\,\, i,j=1,\ldots, d_v \,\, | \,\, R^{ij}_{kl}l^l_m R^{mk}_{no}l^o_p=l^i_lR^{lj}_{km}l^m_oR^{ok}_{np}\rangle.\end{equation}
\end{theorem}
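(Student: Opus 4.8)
The plan is to realize the right-hand side concretely, produce an explicit algebra homomorphism $\Phi\colon \mathcal{T}(A(\cC_v))^+[(\operatorname{det}_q)^{-1}]\to \mathcal{T}(A(\cC_v))$, and then show it is bijective. For the generators: recall from the explicit description $A(\cC_v)=\widetilde A(\cC_v)/Q$ that each object $U\in\cC_v$ contributes a subspace $U^*\bt U$, with image $i_U$, and hence after applying $\mathcal{T}$ a subspace $U^*\ot U\subset \mathcal{T}(A(\cC_v))$. Taking $U=W_v=\CC^{d_v}$ the vector representation, and fixing the standard basis $(v_j)$ with dual basis $(v^i)$, set $l^i_j:=i_{W_v}(v^i\ot v_j)$; take $\operatorname{det}_q$ to be the distinguished generator of the one-dimensional summand $\mathcal{T}((\Lambda^{d_v}_q W_v)^*\ot \Lambda^{d_v}_q W_v)$ as in the definition above, which by the preceding proposition is central and group-like in $\mathcal{T}(A(\cC_v))$.

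Next I would check the reflection equation relations. On the summands $W_v^*\ot W_v$ the multiplication of $\mathcal{T}(A(\cC_v))$ is the composite $i_{W_v\ot W_v}\circ \sigma_{(W_v^*\ot W_v),\,W_v^*}$; expanding $\sigma_{(W_v^*\ot W_v),W_v^*}$ through $\sigma_{W_v,W_v^*}$ and $\sigma_{W_v^*,W_v^*}$, substituting the formulas for $\sigma_{W_v^*,W_v^*}$, $\sigma^{-1}_{W_v,W_v^*}$ recalled in Section~\ref{RTTsec1}, and using the relations in $Q$ coming from the Hecke braiding $\sigma_{W_v,W_v}\in\End(W_v^{\ot 2})$ (whose matrix is the $R$-matrix), one obtains exactly $R^{ij}_{kl}l^l_m R^{mk}_{no}l^o_p=l^i_lR^{lj}_{km}l^m_oR^{ok}_{np}$ — equivalently, this is the identification of $\mathcal{T}(A(\cC_v))^+$ with the reflection equation algebra, which is already Proposition~\ref{Oeflat}(2) (it is $\cO_q(e)$ for a loop $e$ at $v$). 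So $\Phi$ is well defined on $\mathcal{T}(A(\cC_v))^+$. To extend it to the localization, note that the quantum determinant of the matrix $(l^i_j)$, formed by $q$-antisymmetrization, must map under $\Phi$ to a nonzero scalar multiple of the categorical $\operatorname{det}_q$ of the previous step, since both span the one-dimensional summand indexed by $\Lambda^{d_v}_q W_v$; as that element is central and invertible, $\Phi$ extends to $\mathcal{T}(A(\cC_v))^+[(\operatorname{det}_q)^{-1}]$.

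Surjectivity: because $\cC_v$ is semisimple with the same fusion rules as $\Rep(GL_{d_v})$, every simple object occurs as a summand of some $W_v^{\ot k}\ot (W_v^*)^{\ot l}$, and $W_v^*\cong \Lambda^{d_v-1}_q W_v\ot (\operatorname{det}_q)^{-1}$ is a summand of $W_v^{\ot(d_v-1)}\ot(\operatorname{det}_q)^{-1}$; hence every summand $U^*\ot U$ of $\mathcal{T}(A(\cC_v))$ lies in the subalgebra generated by the image of $\Phi$ together with $(\operatorname{det}_q)^{-1}$, so $\Phi$ is onto. Injectivity then follows by a standard flat-deformation count: the reflection equation algebra is a flat deformation of $\cO(\Mat_{d_v}(\CC))$ (Proposition~\ref{Oeflat}, \cite{KS}), hence localized at $\operatorname{det}_q$ it is a flat deformation of $\cO(GL_{d_v})$; and $\mathcal{T}(A(\cC_v))$ is a flat deformation of $A(\Rep GL_{d_v})\cong\cO(GL_{d_v})$ by the $q$-analogue of the Peter--Weyl decomposition (\cite{KS}, \cite{Ma}). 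Under the classical limit $q\to 1$ the map $\Phi$ becomes the identity of $\cO(GL_{d_v})$; a degree-preserving surjection between two flat deformations of the same algebra which specializes to an isomorphism is an isomorphism.

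The step I expect to be the main obstacle is the bookkeeping around the braiding in the second paragraph: pinning down the precise placement of $R$ versus $R_{21}$ (and of $R^{-1}$) in \eqref{refleqns} against the conventions $\sigma_{W_v^*,W_v^*}$, $\sigma^{-1}_{W_v,W_v^*}$, $\sigma_{W_v^*,W_v}$ of Section~\ref{RTTsec1}, and verifying that the determinant assembled from the $l^i_j$ really equals the categorically defined $\operatorname{det}_q$ up to an invertible scalar rather than merely generating the same line up to a possibly $q$-dependent constant. Once the conventions are fixed these reduce to routine $R$-matrix identities (QYBE and the Hecke relation), and the remaining flatness argument is standard.
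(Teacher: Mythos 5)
The paper does not actually prove this theorem; it is stated as ``well-known'' (essentially Majid's identification of the braided matrix-coefficient algebra with the reflection equation algebra, and the localization statement reflects the $q$-Peter--Weyl decomposition of $U_q'(\mathfrak{gl}_N)$). So there is no paper proof to compare against. Your approach --- realize the right side concretely, build a map $\Phi$ on generators, verify the reflection equation relations inside $\mathcal{T}(A(\cC_v))$, then get surjectivity from semisimplicity and injectivity from flatness --- is the standard and sensible one, and its overall structure is sound.

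The step you flag yourself is the genuine weak spot, and the paper in fact underscores it: the remark immediately after the definition of $\det_q$ states that no explicit formula for $\det_q$ in terms of the $l^i_j$ is known for the reflection equation algebra (unlike the FRT case). So you cannot invoke ``the quantum determinant formed by $q$-antisymmetrization'' as though it were a given element of $\mathcal{T}(A(\cC_v))^+$. The fix requires no explicit formula: the multiplication of $\mathcal{T}(A(\cC_v))$ restricted to $(W_v^*\ot W_v)^{\ot d_v}$ factors through $i_{W_v^{\ot d_v}}$, whose image in $\mathcal{T}(A(\cC_v))$ is the sum of all summands $U^*\ot U$ for $U$ a constituent of $W_v^{\ot d_v}$; since $\Lambda^{d_v}_q W_v$ is such a constituent, $\det_q$ lies in the image of $\Phi$ restricted to $\mathcal{T}(A(\cC_v))^+$. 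Centrality of that preimage then follows from centrality of $\det_q$ and injectivity of $\Phi|_{\mathcal{T}(A(\cC_v))^+}$, which you establish by the same flatness argument you give (comparing with the RE algebra's PBW basis, Proposition~\ref{Oeflat}(2)). With that adjustment --- replacing the appeal to a (nonexistent) explicit antisymmetrization formula by the factorization-through-$i_{W_v^{\ot d_v}}$ argument you already use for surjectivity --- the proof is complete and in line with the intent of the theorem.

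One more small caution on the injectivity step: the localization at $\det_q$ is not graded, only filtered, so the ``degree-preserving surjection between flat deformations specializing to an isomorphism is an isomorphism'' lemma must be run at the filtered level (or applied to the unlocalized subalgebra first, with the localization tacked on afterward). This is routine, but stating it as a graded argument will not quite compile.
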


In particular, there is a well-known isomorphism of algebras,
$$\kappa:\mathcal{T}(A(\cC_v))\to U_q'(\mathfrak{gl}_{d_v})$$
$$ l^i_j \mapsto \widetilde{l}_{ij}.$$
We note in passing that $\kappa(\mathcal{T}(A(\cC))^+)=U^+$.  Henceforth, we identify $\mathcal{T}(A(\cC_v))$ with $U'_q(\mathfrak{gl}_{d_v})$ and $\mathcal{T}(A(\cC_v))^+$ with $U^+$ via the isomorphism $\kappa$.\glossary{$\kappa$}

\subsection{Quantum moment map for $\cD_q(e)$ when $e$ is not a loop}
In the next two sections, we construct quantum moment maps, $\mu_v^e:U_v\to\cD_e$ for each edge $e\in E$, and $v=\alpha(e),\beta(e)$.  As might be expected, the construction is quite different depending on whether or not $e$ is a loop.  As such, we treat the two cases in different sections.

First, we recall from \cite{VV} the notion of a quantum moment map for a coideal subalgebra, a mild generalization of that in \cite{L}.  Let $H$ be a Hopf algebra, with $H'$ a left coideal subalgebra; that is, $H'$ is a subalgebra, and $\Delta(H')\subset H\otimes H'$.  Then a  moment map for an $H$-algebra $A$ is a homomorphism $\mu:H'\to A$, such that
$$\mu(h)a= (h_{(1)}\rhd a)\mu(h_{(2)}),$$
where we denote the action of $h\in H$ on $a\in A$ by $h\rhd a$ to distinguish it from the multiplication in $A$.

\begin{definition-proposition}\label{edgemomdef}\glossary{edge map, $\mu^e_v$} Let $e\in E$, and $v=\beta(e)\neq\alpha(e)$.  The \emph{edge map} $\mu^e_v:U_v^+\to \cD_e$ given on generators by:\footnote{We have set $t=1$ in the definition of $\cD_q(\Mat_d(Q))$, for ease of notation (see Remark \ref{teq1}).  It is easily checked that defining $\mu^e_v(l^i_j):=\delta^i_j +t(q-q^{-1})\partial^i_ka^k_j$ yields a moment map for other choices of $t$.  This will be needed in Section \ref{sDAHA}.}
$$\mu^e_v(l^i_j) = (\delta^i_j+(q-q^{-1})\sum_k\partial^i_ka^k_j),$$
defines a homomorphism of algebras in $\cC$.\label{momentbeta}
\end{definition-proposition}
\begin{proof}
Following the notation of Section \ref{sec:QG}, we let $M$ denote the matrix:
$$M:= \sum_{i,j} \mu^e_v(l^i_j) E^j_i.$$
We have $M= I + (q-q^{-1})DA$.  We need to show that the elements $\mu^e_v(l^i_j)\in\cD_e$ satisfy the reflection equation relations \eqref{refleqns}.  We compute, in matrix notation:
\begin{align*}
M_2R_{21}M_1R_{12} &= (I + (q-q^{-1})D_2A_2)R_{21}(I+(q-q^{-1})D_1A_1)R_{12}\\
&=R_{21}R_{12} + (q-q^{-1})(D_2A_2\underbrace{R_{21}R_{12}}_{\textrm{Hecke reln.}} + R_{21}D_1A_1R_{12})\\&\phantom{===} + (q-q^{-1})^2D_2\underbrace{A_2R_{21}D_1}_{I(e,e^\vee)}A_1R_{12}\\
&=R_{21}R_{12} + (q-q^{-1})(D_2A_2 + (q-q^{-1})\underbrace{D_2A_2\Omega_{12}R_{12}}_{\textrm{cancel this}} + R_{21}D_1A_1R_{12})\\&\phantom{===} + (q-q^{-1})^2(D_2D_1\underbrace{R_{21}^{-1}A_2A_1R_{12}}_{I(e)} - \underbrace{D_2\Omega_{12}A_1R_{12})}_{\textrm{with this}}\\
&=R_{21}R_{12} + (q-q^{-1})(D_2A_2 + R_{21}D_1A_1R_{12}) + (q-q^{-1})^2D_2D_1A_1A_2\\
\end{align*}
On the other hand, we compute:
\begin{align*}
R_{21}M_1R_{12}M_2 &= R_{21}(I+(q-q^{-1})D_1A_1)R_{12}(I+(q-q^{-1})D_2A_2)\\
&= R_{21}R_{12} + (q-q^{-1})(R_{21}D_1A_1R_{12} + \underbrace{R_{21}R_{12}}_{\textrm{Hecke reln.}}D_2A_2) \\
&\phantom{===} + (q-q^{-1})^2R_{21}D_1\underbrace{A_1R_{12}D_2}_{I(e,e^\vee)}A_2\\
&= R_{21}R_{12} + (q-q^{-1})(D_2A_2 +  (q-q^{-1})\underbrace{\Omega_{12}R_{12}D_2A_2}_{\textrm{cancel this}} + R_{21}D_1A_1R_{12}) \\
&\phantom{===} + (q-q^{-1})^2(\underbrace{R_{21}D_1D_2R_{12}^{-1}}_{I(e^\vee)}A_1A_2 - \underbrace{R_{21}D_1\Omega_{12}A_2}_{\textrm{with this}})\\
&= R_{21}R_{12} + (q-q^{-1})(D_2A_2 + R_{21}D_1A_1R_{12}) +  (q-q^{-1})^2D_2D_1A_1A_2\\
&=M_2R_{21}M_1R_{12},
\end{align*}
as desired.  Thus the homomorphism $\mu^e_v$ is well defined.
\end{proof}

\begin{proposition}\label{mommap}
Let $v=\beta(e)\neq\alpha(e)$.  Regard $\mu_v^e$ above as a map from $U^+$ via the isomorphism $\kappa$.  Then $\mu_v^e$ is a quantum moment map: 
$$\mu^e_v(x)y=(x_{(1)}\rhd y) \mu^e_v(x_{(2)}),$$ for all $x\in U^+, y\in\cD^\circ_e$.
\end{proposition}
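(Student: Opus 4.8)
The plan is to reduce the moment-map identity to generators on both sides and then verify a single matrix relation in $\cD^\circ_e$; throughout, abbreviate $\mu=\mu^e_v$. First I would observe that the identity $\mu(x)y=(x_{(1)}\rhd y)\mu(x_{(2)})$ is additive and multiplicative in $y$: if it holds for $y$ and $y'$ and all $x$ then, since $\cD^\circ_e$ is an algebra in $\cC$ and hence a $U_v$-module algebra, one regroups $(x_{(1)}\rhd y)(x_{(2)}\rhd y')\mu(x_{(3)})=(x_{(1)}\rhd(yy'))\mu(x_{(2)})$ using coassociativity and $\Delta(U^+)\subset U\ot U^+$, giving the identity for $yy'$. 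It is also stable under inverting $y$: from the identity for an invertible $y$ one gets $(x_{(1)}\rhd y^{-1})\mu(x_{(2)})\,y=(x_{(1)}\rhd(y^{-1}y))\mu(x_{(2)})=\mu(x)$, hence the identity for $y^{-1}$. So it suffices to check $y=a(e)^m_n$ and $y=\partial(e)^m_n$. Similarly the identity is multiplicative in $x$: if it holds for $x$ and $x'$ then $\mu(xx')y=\mu(x)\mu(x')y=(x_{(1)}\rhd(x'_{(1)}\rhd y))\mu(x_{(2)})\mu(x'_{(2)})=((xx')_{(1)}\rhd y)\mu((xx')_{(2)})$, using that $\mu$ and $\Delta$ are algebra maps and that $\rhd$ is a $U$-action. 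So it is enough to take $x=l^i_j$, i.e. to verify the matrix identity in $\cD^\circ_e$ governed by $M=\sum_{i,j}\mu(l^i_j)E^j_i=I+(q-q^{-1})DA$.

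Second, I would make the action of $U_v$ (with $v=\beta(e)$) on the generators explicit. It acts on the matrices $A$ and $D$ only through the factors $W_\beta\subset\Mat(e)$ and $W_\beta^*\subset\Mat(e^\vee)$, by the vector representation and its dual. Using the coproduct formula $\Delta(\widetilde{l}^i_j)=\sum_{s,t}l^{+i}_sS(l^{-t}_j)\ot\widetilde{l}^s_t$ of Theorem \ref{lfdesc}, the formulas for $\rho_V(l^{\pm i}_j)$, and the identification of $\sigma_{W_v,W_v}$, $\sigma_{W_v,W_v^*}^{-1}$, $\sigma_{W_v^*,W_v}$ with $R^v$ from Section \ref{RTTsec1} (together with $(S\ot\id)(R)=R^{-1}$ and $(S\ot S)(R)=R$ to clear the antipodes), I expect $l^i_j\rhd A_2$ and $l^i_j\rhd D_2$ to be expressible as $R^v$-conjugation in the $v$-slot, in the RTT matrix form used in Proposition \ref{manyrelns}. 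This converts the desired identity, for $x=l^i_j$ and $y=a(e)^m_n$, into a closed matrix equation relating $M_1$, $A_2$, and powers of $R^v$ and $R^w$.

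Third, I would verify that matrix equation by direct computation: expand $M_1=I+(q-q^{-1})D_1A_1$ and reduce using the defining relations of $\cO_q(e)$ (namely $R^vA^e_2A^e_1=A^e_1A^e_2R^w_{21}$), of $\cO_q(e^\vee)$, the cross relation $D^e_2(R^v)^{-1}A^e_1=A^e_1R^wD^e_2+\Omega$, the QYBE, and the Hecke relation, arranging that the stray $\Omega$-terms telescope exactly as in the computation of $M_2R_{21}M_1R_{12}$ in Definition-Proposition \ref{momentbeta} and in the computations of Proposition \ref{manyrelns}; the case $y=\partial(e)^m_n$ is handled the same way with the roles of $\cO_q(e)$ and $\cO_q(e^\vee)$ interchanged. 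The step I expect to be the main obstacle is pinning down the adjoint-action formula in matrix form: the antipode $S(L^-)$ appearing in $\Delta(\widetilde{L})$, together with the need to carry the acting element past the inert factor $W_\alpha^*$ of $A$, makes it easy to misplace or wrongly invert an $R^v$, and the ensuing cancellation of the $\Omega$-terms is delicate. Once the action is correctly identified, however, the remaining verification is a finite $R$-matrix manipulation of exactly the kind already carried out elsewhere in the paper, with no genuinely new ingredient.
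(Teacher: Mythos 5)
Your proposal takes essentially the same route as the paper: reduce to generators $x=\widetilde{l}^i_j$ and $y=a^m_n,\partial^o_p$, express the adjoint action in RTT matrix form via $\Delta(\widetilde{l}^i_j)=\sum l^{+i}_sS(l^{-t}_j)\ot\widetilde{l}^s_t$, and verify the resulting matrix identity $A_2R_{21}M_1R_{12}=M_1A_2$ by expanding $M_1=I+(q-q^{-1})D_1A_1$ and applying the relations of $\cO_q(e)$, $\cO_q(e^\vee)$, the cross relation, and the Hecke relation so that the $\Omega$-terms cancel. Your explicit justification of the reduction to generators (multiplicativity in $x$ and $y$, stability under inversion) spells out a step the paper leaves implicit, but the core of the argument is identical.
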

\begin{proof}It suffices to check this on the generators $\widetilde{l}^i_{j}$ of $U^+$, and the generators $a^m_n$, $\partial^o_p$ of $\cD_e$.  By definition of the $U^+$ action on $V$, we have:

\begin{align*}
((\widetilde{l}^i_{j})_{(1)}\rhd a^m_n)\mu^e_v((\widetilde{l}^i_{j})_{(2)}) &=((l^{+i}_{k}S(l^{-l}_{j}))\rhd a^m_n)(\delta^k_l + (q-q^{-1}) \partial^k_oa^o_l)\\
&=R^{qi}_{pk}R^{lp}_{jn}a^m_q(\delta^k_l + (q-q^{-1}) \partial^k_oa^o_l).
\end{align*}

In the matrix notation of Section \ref{sec:QG}, we set
\begin{align*}N&:=\sum_{i,j,n,m}((\widetilde{l}^i_{j})_{(1)}\rhd a^m_n)\mu^e_v((\widetilde{l}^i_{j})_{(2)})E^j_i\ot E^n_m\\
&=\sum_{i,j,n,m}(R^{qi}_{pk}R^{lp}_{jn}a^m_q(\delta^k_l + (q-q^{-1}) \partial^k_oa^o_l)) E^j_i\ot E^n_m.
\end{align*}

Then, we have:
\begin{align*}
N&=A_2R_{21}(I+(q-q^{-1})D_1A_1)R_{12}\\
  &=A_2R_{21}R_{12} + (q-q^{-1})\underbrace{A_2R_{21}D_1}_{I(e,e^\vee)}A_1R_{12}\\
    &=A_2R_{21}R_{12} + (q-q^{-1})(D_1\underbrace{R^{-1}_{21}A_2A_1R_{12}}_{I(e)} - \Omega_{12}A_1R_{12})\\
     &=A_2(\underbrace{R_{21}R_{12} - \Omega_{12}R_{12}}_{\textrm{Hecke reln.}})  + (q-q^{-1})D_1A_1A_2\\
     &=A_2 + (q-q^{-1})D_1A_1A_2\\
     &=M_1A_2
\end{align*}
Comparing matrix coefficients, we find:
$$((\widetilde{l}^i_{j})_{(1)}\rhd a^m_n)\mu^e_v((\widetilde{l}^i_{j})_{(2)}) = \mu^e_v(\widetilde{l}^i_{j})a^m_n,$$
as desired.  The computation for $\partial^o_p$ is similar.
\end{proof}

\begin{corollary} \label{qcentral}The image $\mu^e_v(\operatorname{det}_q)$ of the quantum determinant in $U^+$ satisfies the equation:
$$\mu_v^e(\operatorname{det}_q) a^I\partial_J = q^{2(|J|-|I|)}a^I\partial_J\mu_v^e(\operatorname{det}_q),$$
\end{corollary}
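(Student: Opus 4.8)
The plan is to feed the element $\det_q$ into the quantum moment map identity of Proposition~\ref{mommap} and exploit that $\det_q$ is group-like. First note that $\det_q$ is the $q$-determinant of the matrix $(\widetilde{l}^i_j)$, hence a polynomial in those generators, so it lies in $U^+$ and $\mu^e_v(\det_q)$ is a genuine element of $\cD_e$; moreover $\det_q$ is central and group-like in $\mathcal{T}(A(\cC_v))\cong U'_q(\mathfrak{gl}_{d_v})$, as recorded just above. Group-likeness gives $\Delta(\det_q)=\det_q\otimes\det_q$, so Proposition~\ref{mommap} — which, although stated for $\cD^\circ_e$, is verified on generators and hence valid for every $y\in\cD_e$ by the module-algebra axiom — specializes at $x=\det_q$ to
\[
\mu^e_v(\det_q)\,y \;=\; (\det_q\rhd y)\,\mu^e_v(\det_q), \qquad y\in\cD_e,
\]
where $\rhd$ is the intrinsic $U_v$-action making $\cD_e$ an algebra in $\cC$. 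Thus the corollary reduces to computing $\det_q\rhd(a^I\partial_J)$.

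Next, group-likeness of $\det_q$ again makes $y\mapsto\det_q\rhd y$ an algebra automorphism of $\cD_e$, so $\det_q\rhd(a^I\partial_J)$ is the product of the images of the individual generators occurring in $a^I$ and $\partial_J$, and it suffices to act on $a^i_j$ and on $\partial^k_l$. For $v=\beta(e)\neq\alpha(e)$ the $U_v$-module structure on $\Mat(e)=W^*_{\alpha(e)}\otimes W_v$ lives entirely in the defining factor $W_v$, while that on $\Mat(e^\vee)=W^*_v\otimes W_{\alpha(e)}$ lives entirely in the contragredient factor $W^*_v$. Since $\det_q$ is central it acts by a single scalar on the irreducible $W_v$, and by the reciprocal scalar on $W^*_v$; this scalar is a power of $q$, pinned down by a direct computation — most cheaply by the rank-one reduction $d_v=1$, where $\mu^e_{\beta(e)}(\det_q)$ is the element $g^\alpha$ of Definition-Proposition~\ref{easycasenoloop} and its commutation relations with $a,\partial$ are read off there (in general one uses the relations of Proposition~\ref{manyrelns}, or the fact that $\sigma_{\Lambda^{d_v}_q(\CC^{d_v}),W_v}$ is scalar). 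The upshot is $\det_q\rhd a^i_j=q^{-2}a^i_j$ and $\det_q\rhd\partial^k_l=q^{2}\partial^k_l$, hence $\det_q\rhd(a^I\partial_J)=q^{2(|J|-|I|)}a^I\partial_J$; substituting into the displayed identity gives $\mu^e_v(\det_q)\,a^I\partial_J=q^{2(|J|-|I|)}a^I\partial_J\,\mu^e_v(\det_q)$, as asserted. The cases $v=\alpha(e)$ and $e$ a loop go through verbatim once one re-checks which tensor factor of $\Mat(e)$, $\Mat(e^\vee)$ is dualized (and, in the loop case, uses the loop analogue of Proposition~\ref{manyrelns}).

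Two points need care, though neither is deep. The first is making sure the comultiplication with respect to which $\mu^e_v$ is a moment map in Proposition~\ref{mommap} is the one for which $\det_q$ is group-like; this is why it is cleanest to invoke group-likeness and centrality of $\det_q$ directly inside $\mathcal{T}(A(\cC_v))$ rather than transporting the statement along the algebra isomorphism $\kappa$. The second, and the only place an honest computation is needed, is fixing the normalization of the scalar by which $\det_q$ acts on $W_v$: since the construction of $\det_q$ gives no closed formula in terms of the $a^i_j,\partial^k_l$, the rank-one reduction is the efficient route. Everything else is bookkeeping with $R$-matrices.
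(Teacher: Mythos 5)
Your strategy is exactly the paper's: use group-likeness of $\det_q$ to reduce the moment map identity of Proposition~\ref{mommap} to a single conjugation by $\mu^e_v(\det_q)$, and then read off the scalar by which $\det_q$ acts on $W_v$ and $W_v^*$. The paper's own proof is the same two sentences, minus the (helpful) care you take about where to invoke group-likeness and about extending the moment map identity from generators to all of $\cD_e$.

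There is, however, a sign problem in the scalar you write down, and it is worth untangling carefully because it exposes an internal inconsistency in the paper itself. You assert $\det_q\rhd a^i_j=q^{-2}a^i_j$ and $\det_q\rhd\partial^k_l=q^{2}\partial^k_l$, citing the rank-one relations near Definition-Proposition~\ref{easycasenoloop}. But those relations read $g^\alpha a=q^2 a g^\alpha$ and $g^\alpha\partial=q^{-2}\partial g^\alpha$, and since the grouplike moment-map identity gives $\mu^e_v(\det_q)\,y\,\mu^e_v(\det_q)^{-1}=\det_q\rhd y$ (with $\mu^e_\beta(\det_q)=g^\alpha$ when $d_v=1$), these force $\det_q\rhd a=q^{2}a$ and $\det_q\rhd\partial=q^{-2}\partial$ — the opposite of what you wrote. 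This also agrees with the sentence in the paper's proof that $\det_q$ acts on $V$ by $q^2$ and on $V^*$ by $q^{-2}$, and with the fact that for $v=\beta(e)$ the $U_v$-content of $a^i_j$ sits in $W_v$ (not $W_v^*$). So the correct conclusion is
\[
\mu^e_v(\det_q)\,a_I\partial_J \;=\; q^{2(|I|-|J|)}\,a_I\partial_J\,\mu^e_v(\det_q),
\]
and the exponent $2(|J|-|I|)$ in the displayed statement of Corollary~\ref{qcentral} appears to be a typo, being inconsistent both with the rank-one relations and with the paper's own proof text. Your write-up reverse-engineered the scalar to match the printed display rather than deriving it from the relations you cited. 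For the downstream uses (Corollary~\ref{oreset} and the Ore condition, Section~\ref{q-Fourier}) the sign is immaterial, since all that is needed is conjugation by a fixed power of $q$; but as a matter of bookkeeping you should fix the sign of $\det_q\rhd a^i_j$ and $\det_q\rhd\partial^k_l$, and note the resulting correction to the statement.
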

\begin{proof}
Recall that $\operatorname{det}_q$ is grouplike in $U^+$.  Thus the moment map condition reads:
$$\mu_v^e(\operatorname{det}_q)a^I\partial_J=(\operatorname{det}_q\rhd a^I\partial_J)\mu_v^e(\operatorname{det}_q).$$
The element $\operatorname{det}_q$ acts on $V\in\cC_v$ by the scalar $q^2$, and $V^*\in\cC_v$ by the scalar $q^{-2}$, so the claim follows.
\end{proof}
\begin{proposition}\label{momentalpha} Let $e\in E$, and $v=\alpha(e)\neq\beta(e)$.  The elements $\overline\mu^e_v(l^i_j)$,
$$\overline\mu^e_v(l^i_j) = (\delta^i_j+(q-q^{-1})\sum_k a^i_k\partial^k_j),$$
satisfy the relation:
$$\overline{M_2}R_{12}^{-1}\overline{M_1}R_{21}^{-1} = R_{12}^{-1}\overline{M_1}R_{21}^{-1}\overline{M_2},$$
where $\overline{M}$ denotes the matrix:
$$\overline{M}:=\sum_{i,j} \overline{\mu}^e_v(l^i_j)E^j_i.$$
\end{proposition}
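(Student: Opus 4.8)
The formula for $\overline{\mu}^e_v$ is literally the formula of Definition-Proposition \ref{momentbeta} for the reversed edge $e^\vee$ (note $\alpha(e)=\beta(e^\vee)$, and that $a(e)=a(e^{\vee\vee})$ plays the role of $\partial(e^\vee)$), so one is tempted to deduce the statement from \ref{momentbeta}; but $\cD_q(e)$ and $\cD_q(e^\vee)$ are built using the cross-relation $I(e,e^\vee)$, which is not manifestly symmetric in $e\leftrightarrow e^\vee$, so it is cleaner to verify the identity directly, by a computation running exactly parallel to the proof of \ref{momentbeta}. Write $\lambda=q-q^{-1}$, $A=A^e$, $D=D^e$, so that $\overline{M}_1=I+\lambda A_1D_1$ and $\overline{M}_2=I+\lambda A_2D_2$. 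Substituting into
$$\overline{M}_2R_{12}^{-1}\overline{M}_1R_{21}^{-1}-R_{12}^{-1}\overline{M}_1R_{21}^{-1}\overline{M}_2$$
and expanding in $\lambda$, the degree-$0$ term $R_{12}^{-1}R_{21}^{-1}$ cancels, and it remains to match the degree-$1$ and degree-$2$ parts, the degree-$2$ parts being $\lambda^2A_2D_2R_{12}^{-1}A_1D_1R_{21}^{-1}$ and $\lambda^2R_{12}^{-1}A_1D_1R_{21}^{-1}A_2D_2$.

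To reduce these I would proceed exactly as in \ref{momentbeta}: use the cross-relation $I(e,e^\vee)$, rearranged so as to move an $A$ to the left past a $D$ at the cost of an $\Omega$-term, to rewrite the interior factors $D_2R_{12}^{-1}A_1$ and $D_1R_{21}^{-1}A_2$; then carry the resulting pairs of $A$'s and of $D$'s through the surviving $R$-strings using the quadratic relations $I(e)$ (for the $a$'s) and $I(e^\vee)$ (for the $\partial$'s), and collapse the pure-$R$ strings with the QYBE. Since $R_{12}^{-1}$ and $R_{21}^{-1}$ again satisfy the QYBE and the Hecke relation --- indeed $R^{-1}$ is $R$ with $q$ replaced by $q^{-1}$ --- every structural identity used in \ref{momentbeta} transfers verbatim, with $R_{12}^{-1},R_{21}^{-1}$ in place of $R_{21},R_{12}$. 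The intertwining identities $AM=\overline{M}A$ and $MD=D\overline{M}$, where $M=I+\lambda DA$ is the matrix of \ref{momentbeta} (both immediate from associativity), are also available and convenient for cross-checking the intermediate expressions.

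The only real difficulty is combinatorial: each application of the cross-relation spawns an $\Omega$-term, and one must verify that the stray $\Omega$-contributions appearing at order $\lambda$ cancel against those appearing at order $\lambda^2$. As in \ref{momentbeta}, this is handled with the Hecke relation, which rewrites $R_{21}R_{12}$, and likewise $R_{12}^{-1}R_{21}^{-1}$, in terms of $1$ and $\Omega$, together with the elementary conjugation identities $\Omega_{ij}X_{ik}=X_{jk}\Omega_{ij}$ for $X\in\{A,D,R^{\pm1}\}$. Once both sides are brought to a common normal form (schematically $R_{12}^{-1}R_{21}^{-1}+\lambda(\cdots)+\lambda^2\cdot(\text{a fixed monomial in the }A\text{'s and }D\text{'s})$), comparing matrix coefficients gives the asserted relation among the $\overline{\mu}^e_v(l^i_j)$. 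As in \ref{momentbeta}, this exhibits $\overline{\mu}^e_v$ as an algebra homomorphism out of the reflection-equation algebra associated to the inverse braiding $R^{-1}$ (morally, out of $U^+$ with $q$ replaced by $q^{-1}$), which is the content of the proposition.
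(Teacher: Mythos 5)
Your proposal is an outline, not a proof. In the second and third paragraphs you name the correct ingredients --- the cross-relation $I(e,e^\vee)$ to swap a $D$ past an $A$ at the cost of an $\Omega$-term, the quadratic relations $I(e)$ and $I(e^\vee)$, QYBE, the Hecke relation, and the cancellation of $\Omega$-contributions --- and the expansion in $\lambda$ is set up correctly, but the actual $R$-matrix manipulation (which in the paper's treatment of Definition-Proposition~\ref{momentbeta} occupies roughly a dozen displayed lines) is nowhere carried out. ``I would proceed exactly as in~\ref{momentbeta}'' and ``once both sides are brought to a common normal form'' describe a plan; whether the stray $\Omega$-terms at orders $\lambda$ and $\lambda^2$ genuinely conspire to cancel is precisely what a proof of the proposition must establish, and that is left as an assertion.

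You also reject the route the paper actually takes, and your stated reason reflects a misreading of the symmetry involved. The paper's proof is a one-sentence reduction: the substitution $a^i_j\leftrightarrow\partial^i_j$ together with $R_\alpha\mapsto (R_\alpha)^{-1}_{21}$, $R_\beta\mapsto (R_\beta)^{-1}_{21}$ carries the defining relations of $\cD_q(e)$ onto those of $\cD_q(e^\vee)$, and under it the relation asserted in~\ref{momentalpha} becomes literally the relation of~\ref{momentbeta}. Your objection --- that $I(e,e^\vee)$ is ``not manifestly symmetric in $e\leftrightarrow e^\vee$'' --- is true of the bare interchange of $e$ and $e^\vee$, but the paper's substitution also inverts the braiding, and that is exactly what repairs the asymmetry you noticed. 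Indeed your own remark that $R^{-1}=R|_{q\to q^{-1}}$ still satisfies QYBE and Hecke, ``so every structural identity used in~\ref{momentbeta} transfers verbatim,'' is the content of the paper's argument phrased informally; so the proposal implicitly relies on the very symmetry it purports to avoid. Either route can be made into a proof, but as written you have dismissed the short one on insufficient grounds and sketched, without executing, the long one.
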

\begin{proof}
We observe that the defining relations of $\cD_e$ and $\cD_{e^\vee}$ are related by interchanging each $a^i_j$ with $\partial^i_j$, and replacing $R_\alpha,R_\beta$ with $(R_\alpha)_{21}^{-1}$, $(R_\beta)_{21}^{-1}$, so that this relation follows from Definition-Proposition \ref{momentbeta} .
\end{proof}
\begin{corollary} \label{oreset}
Let $v=\beta\neq\alpha$ (resp, $v=\alpha\neq\beta$).  The powers of the $q$-determinant in the variables $\mu^v_e(l^i_j)$ (resp, $\overline{\mu}^e_v(l^i_j)$) form a multiplicative Ore set.
\end{corollary}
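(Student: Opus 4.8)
The plan is to reduce the statement to the assertion that $d:=\mu^e_v(\det_q)$ is a \emph{normal} element of $\cD_q(e)$ — in fact a $q$-central one — whence the Ore condition follows formally. The first step is to equip $\cD_q(e)$ (and, more generally, $\cD_q$) with the $\ZZ$-grading in which $\deg a(e)^i_j=-1$ and $\deg\partial(e)^i_j=+1$. Inspecting the defining relations of Definition~\ref{Dqdefn}: the relations among the $a$'s are homogeneous of degree $-2$, those among the $\partial$'s of degree $+2$, the inter-edge relations of degree $0$, and the same-edge cross relation $D^e_2(R^v)^{-1}A^e_1=A^e_1R^wD^e_2+\Omega$ is homogeneous of degree $0$ (the $\Omega$-term being a sum of scalars). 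Hence this grading descends to the quotient, and by the PBW theorem (Theorem~\ref{flatness}) the homogeneous component $\cD_q(e)[k]$ is exactly the span of the standard monomials $a_I\partial_J$ with $|J|-|I|=k$. Since $\det_q$ is a polynomial in the generators $l^i_j$ of $U^+$ and each $\mu^e_v(l^i_j)=\delta^i_j+(q-q^{-1})\sum_k\partial^i_ka^k_j$ is homogeneous of degree $0$, the element $d$ lies in $\cD_q(e)[0]$.

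Next I would invoke Corollary~\ref{qcentral}: for every standard monomial, hence by linearity for every $y\in\cD_q(e)[k]$, one has $d\,y=q^{2k}\,y\,d$, and therefore $d^{\,n}y=q^{2kn}\,y\,d^{\,n}$ for all $n\geq 0$. Summing over $k$ gives $d^{\,n}\cD_q(e)=\cD_q(e)\,d^{\,n}$ for every $n$, so each power of $d$ is a normal element, with an explicit two-sided straightening rule. A normal element generates a multiplicative Ore set by its powers: given $r=\sum_k r_k\in\cD_q(e)$ with $r_k\in\cD_q(e)[k]$ and given $n\geq 0$, the identity $d^{\,n}r=\big(\sum_k q^{2kn}r_k\big)d^{\,n}$ simultaneously verifies the left and right Ore conditions for $S:=\{d^{\,n}\mid n\geq 0\}$. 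The one remaining point is the reversibility condition ($d^{\,n}r=0\Rightarrow rd^{\,m}=0$ for some $m$), which is vacuous once one knows $\cD_q(e)$ has no zero divisors; this I would deduce from the PBW basis by a standard filtration argument — passing to the associated graded for the Bernstein-type filtration ($\deg a=\deg\partial=1$) yields a braided (``quantum affine space''-type) quadratic algebra with a PBW basis, which is a domain, so $\cD_q(e)$ is one as well. Finally $d\neq 0$ since $d\to 1$ as $q\to 1$, so $d^{\,n}\neq 0$ and reversibility holds trivially.

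The case $v=\alpha(e)$ is entirely analogous, using Proposition~\ref{momentalpha} in place of Definition-Proposition~\ref{momentbeta}: the matrix $\overline M=I+(q-q^{-1})AD$ satisfies a reflection-type relation, $\overline\mu^e_v$ is likewise a $q$-moment map, the analogue of Corollary~\ref{qcentral} holds by the identical computation, and $\overline\mu^e_v(\det_q)$ is again homogeneous of degree $0$, so the argument proceeds verbatim. I do not expect a genuine obstacle in this proof: Corollary~\ref{qcentral} does all of the real work, and the rest is the standard fact that normal elements in a domain form a (two-sided) Ore set. The only spot requiring mild care is the bookkeeping verifying that the $\ZZ$-grading is well defined on the quotient and that its weights match the exponent $2(|J|-|I|)$ appearing in Corollary~\ref{qcentral}, together with the (routine) verification that $\cD_q(e)$ is a domain.
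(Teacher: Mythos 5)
Your proposal is correct and follows essentially the same route as the paper, which disposes of this corollary in one line: ``This follows as in Corollary \ref{qcentral}.'' The real content is exactly what you isolate, namely that the $q$-commutation relation of Corollary \ref{qcentral} makes $d=\mu^e_v(\det_q)$ a $q$-normal element with an explicit two-sided straightening rule, and $q$-normality plus non-zero-divisor gives a two-sided Ore (indeed denominator) set; the $v=\alpha$ case then goes through by the $e\leftrightarrow e^\vee$ symmetry used already in Proposition \ref{momentalpha}. You are more explicit than the paper in two harmless respects: you name the $\ZZ$-grading with $\deg a=-1$, $\deg\partial=+1$ (implicit in the paper via the exponent $2(|J|-|I|)$) and you supply the reversibility/domain argument via the PBW basis, a point the paper silently takes for granted. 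Neither constitutes a different method; they are the details the paper's terse proof is outsourcing to the reader.
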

\begin{proof}
This follows as in Corollary \ref{qcentral}.
\end{proof}

\begin{definition}
The localized edge differential operator algebra $\cD_q(e)^\circ$ is the localization of $\cD_q(e)$ at the multiplicative Ore sets generated by the $q$-determinants in the elements $\mu^e_\beta(l^i_j)$ and $\overline{\mu}^e_\alpha(l^k_l)$.
\end{definition}

\begin{definition-proposition}  Let $e\in E$, and $v=\alpha(e)\neq \beta(e)$.  The \emph{edge map} $\mu_v^e:U_v^+\to \cD_e^\circ$ given on generators by:\footnote{For $t\neq 1$, we set \mbox{$\mu_v^e(l^i_j)=(\delta^i_j+t(q-q^{-1})a^i_k\partial^k_j)^{-1}$} instead (see Definition \ref{edgemomdef}, Remark \ref{teq1}).}
$$\mu_v^e(l^i_j)=(\delta^i_j+(q-q^{-1})\sum_k a^i_k\partial^k_j)^{-1},$$
defines a homomorphism of algebras in $\cC$.
\end{definition-proposition}
\begin{proof}
The entries of the inverse matrix in the definition lie in the localized algebra $\cD_q(e)^\circ$, where we have inverted the $q$-determinant.  That $\mu_v^e$ defines a homomorphism follows from Proposition \ref{momentalpha}, by taking the inverses of both sides.
\end{proof}

\begin{definition} The edge maps $\mu^e_\alpha$ and $\mu^e_\beta$ extend uniquely to homomorphisms $\mu^e_\alpha:U_\alpha\to\cD_e^\circ$ and $\mu^e_\beta:U_\beta\to\cD_e^\circ$.
\end{definition}
We will henceforth refer only to this extended homomorphism, and not its restriction to $U^+$.

\subsection{Quantum moment map for $\cD_q(e)$ when $e$ is a loop}

\begin{definition} Let $v=\alpha(e)=\beta(e)$.  The localized edge algebra $\cD_q^\circ(e)$ is the localization of $\cD_q(e)$ at the $q$-determinants in the variables $a^i_j$ and $\partial^i_j$ of $\cD_q(e)$.
\end{definition}

\begin{definition-proposition} There is a unique homomorphism,
$$\mu^e_v:\mathcal{T}(A(C_v))\to\cD_q(e),$$
$$l^i_j \mapsto (DA^{-1}D^{-1}A)^i_j$$
\end{definition-proposition}
\begin{proof}

First, we claim that there is a unique homomorphism of algebras in $\cC$,
$$\phi: \mathcal{T}(A(\cC_v))\ot \mathcal{T}(A(\cC_v))\to \cD_q(e)$$
$$(l^i_j\ot l^k_l) \mapsto (DA^{-1}D^{-1})^i_j a^k_l .$$
Once we have constructed $\phi$, we can simply define $\mu^e_v:=\phi\circ\Delta$.

An algebra homomorphism $\phi =f\ot g$ out of $\mathcal{T}(A(\cC_v))\ot \mathcal{T}(A(\cC_v))$ is the same as a pair $f,g$ of algebra homomorphism out of $\mathcal{T}(A(\cC_v))$, such that the images of $f$ and $g$ braided-commute.  That is, we require the following relations on $\mathcal{T}(A(\cC_v))\ot \mathcal{T}(A(\cC_v))$:
$$(1\ot x)(y\ot 1) = r^- y\ot r^+ x,$$

where we use the shorthand $R=r^+\otimes r^-$ (sum is implicit).  On generators $x=l^i_j$, $y=l^k_l$, this condition reads:
$$ (1\ot l^i_j)(l^k_l\ot 1) = \widetilde{R}^{mk}_{jn} R^{op}_{ml}R^{in}_{qr}(R^{-1})^{qs}_{tp}(l^r_s\ot l^t_o),$$
or equivalently,
\begin{equation*}(1\ot l^i_j)R^{jn}_{mk}(l^k_l\ot 1) = R^{in}_{qr}(l^r_s\ot 1)(R^{-1})^{qs}_{tp}(1\ot l^t_o) R^{op}_{ml}.\end{equation*}

Thus the condition we require on $f$ and $g$ is:
$$g(l^i_j)R^{jn}_{mk}f(l^k_l) = R^{in}_{qr}f(l^r_s)(R^{-1})^{qs}_{tp}g(l^t_o) R^{op}_{ml}$$
or, in the matrix notation of Section \ref{sec:QG}:
\begin{equation}G_1RF_2=RF_2 R^{-1} G_1R,\label{FG}\end{equation}
where $F$ and $G$ denote the matrices:
$$F:=\sum_{i.j} f(l^i_j)E^j_i, \quad G := \sum_{i.j}g(l^i_j)E^j_i.$$
The maps $f,g: \mathcal{T}(A(\cC))\to\cD_e^\circ$, $f(l^i_j)= (DA^{-1}D^{-1})^i_j$ and $g(l^i_j)=A^i_j$ are each homomorphisms (they are the natural inclusion of $\mathcal{T}(A(\cC))$, and its composition with Fourier transform, respectively).  It remains to check relation \eqref{FG}.  We have:

\begin{align*}RF_2 R^{-1} G_1R &= RDA^{-1}\underbrace{D^{-1}R^{-1}AR}_{I(e,e^\vee)}\\
&= RD\underbrace{A^{-1}RAR}_{I(e)}D^{-1}\\
&= \underbrace{RDRAR}_{I(e,e^\vee)}A^{-1}D^{-1}\\
&= ARDA^{-1}D^{-1}\\
&= G_1RF_2,
\end{align*}
as desired.
\end{proof}

\begin{proposition}\label{mommaploop} Let $v=\beta(e)=\alpha(e)$.  Regard $\mu_v^e$ above as a map from $U^+$ via the isomorphism $\kappa$.  Then $\mu_v^e$ is a quantum moment map for all $x\in U^+, y\in \cD_e^\circ$.
\end{proposition}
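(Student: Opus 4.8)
The plan is to verify the moment map identity
$$\mu^e_v(x)\,y \;=\; (x_{(1)}\rhd y)\,\mu^e_v(x_{(2)})$$
by reduction to generators, in the same spirit as the proof of Proposition~\ref{mommap}. A standard argument shows it is enough to check the identity when $x$ ranges over the algebra generators $\widetilde l^i_j$ of $U^+$ (Theorem~\ref{lfdesc}) and $y$ over the algebra generators $a^m_n,\partial^m_n$ of $\cD_q(e)^\circ$: if it holds for $x,x'$ it holds for $xx'$, since $\mu^e_v$ is an algebra homomorphism (the preceding Definition-Proposition) and $\rhd$ is an action; if it holds for $y,y'$ it holds for $yy'$, since $\cD_q(e)^\circ$ is a $U_v$-module algebra, so $x\rhd(yy')=(x_{(1)}\rhd y)(x_{(2)}\rhd y')$; and it then extends from $\cD_q(e)$ to the localization $\cD_q(e)^\circ$ by the usual Ore-localization bookkeeping (using $x\rhd 1=\epsilon(x)1$ to solve for the action on the inverted $q$-determinants).

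Next I would unwind the remaining cases using $\mu^e_v=\phi\circ\Delta$ with $\phi(l^i_k\ot l^k_j)=(DA^{-1}D^{-1})^i_k\,a^k_j$, the coproduct $\Delta(\widetilde l^i_j)=\sum_{s,t}l^{+i}_s S(l^{-t}_j)\ot\widetilde l^s_t$, and the $R$-matrix formulas for the action of $l^{+i}_s S(l^{-t}_j)$ on the subobjects $\Mat(e)=W_v^*\ot W_v$ and $\Mat(e^\vee)$. Packaging the generators into the matrices $A,D$ of Section~\ref{RTTsec2} and writing $M:=DA^{-1}D^{-1}A$ for the matrix of $\mu^e_v$, the moment map identity becomes two matrix equations over $\Mat_N(\CC)^{\ot 2}\ot\cD_q(e)^\circ$ — one for $y=a^m_n$, one for $y=\partial^m_n$ — each asserting that $M_1$ times a leg of $A$ (resp.\ of $D$) equals an $R$-conjugate of that leg times $M_1$. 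These are checked by repeated use of the relations $I(e),I(e^\vee),I(e,e^\vee)$ in matrix form (Definitions~\ref{RTTbqca},~\ref{Dqdefn}) together with the QYBE and the Hecke relation, telescoping exactly as in the proofs of Definition-Proposition~\ref{momentbeta} and Proposition~\ref{mommap}.

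The conceptual reason the identity holds, and the organizing principle I would use to keep the computation honest, is that each ``leg'' of $\phi$ is itself moment-map-like for a sub-action: the reflection equation $R_{21}A_1 R A_2=A_2 R_{21}A_1 R$ defining $\cO_q(e)$ (resp.\ $\cO_q(e^\vee)$) is, after the standard identification, precisely the statement that $l^i_j\mapsto a^i_j$ (resp.\ $l^i_j\mapsto\partial^i_j$) satisfies the moment map identity against elements of $\cO_q(e)$ (resp.\ $\cO_q(e^\vee)$); the cross relations $I(e,e^\vee)$ record how the two interact; and the Fourier transform $\mathcal F$ of Definition-Proposition~\ref{gencaseloop}, which is a $U_v$-equivariant algebra automorphism, intertwines the $\partial$-leg with the matrix $DA^{-1}D^{-1}$ appearing in $\phi$. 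So the verification amounts to combining the moment-map identity for $g\colon l^i_j\mapsto a^i_j$ with its counterpart for $f\colon l^i_j\mapsto(DA^{-1}D^{-1})^i_j$, using the braided commutation \eqref{FG} of their images.

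I expect the only real obstacle to be computational rather than structural: because $M=DA^{-1}D^{-1}A$ involves the inverses $A^{-1},D^{-1}$, all of the manipulation must be carried out in the localized algebra, and one must repeatedly push $R$-matrices past $A^{-1}$ and $D^{-1}$ using inverted forms of $I(e),I(e^\vee),I(e,e^\vee)$. I would handle this by mimicking the telescoping cancellations already used successfully in the proof of Definition-Proposition~\ref{gencaseloop}, where exactly the same inverse matrices occur, rather than by clearing denominators with powers of $\det_q$ (which tends to obscure the structure).
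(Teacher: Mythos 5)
Your proposal is correct and takes essentially the same route as the paper: reduce the moment‑map identity to the generators $\widetilde l^i_j$ and $a^m_n,\partial^m_n$, pass to matrix notation $M=DA^{-1}D^{-1}A$, and telescope the defining relations $I(e),I(e^\vee),I(e,e^\vee)$ until the two sides agree. The extra commentary you add (the module‑algebra reduction lemma, the Ore‑localization bookkeeping, the $\phi=f\ot g$/Fourier‑transform heuristic, and explicitly treating both $y=a^m_n$ and $y=\partial^m_n$) is sound and in fact fills small expository gaps that the paper leaves implicit.
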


\begin{proof}
It suffices to check this on the generators $\widetilde{l}^i_{j}$ of $U^+$, and the generators $a^m_n$, $\partial^o_p$ of $\cD_e$.  By definition of the $U^+$ action on $V$, we have:
\begin{align*}
((\widetilde{l}^i_{j})_{(1)}\rhd a^m_n)\mu^e_v((\widetilde{l}^i_{j})_{(2)}) &=((l^{+i}_{k}S(l^{-l}_{j}))\rhd a^m_n)(DA^{-1}D^{-1}A)^k_l\\
&=(R^{-1})^{mi}_{qs}\widetilde{R}^{tq}_{jx}a^x_oR^{os}_{pk}R^{lp}_{tn}(DA^{-1}D^{-1}A)^k_l.
\end{align*}
Thus, the moment map condition reads:
$$(DA^{-1}D^{-1}A) \ot A = (R^{-1})^{mi}_{qs}\widetilde{R}^{tq}_{jx}a^x_oR^{os}_{pk}R^{lp}_{tn}(DA^{-1}D^{-1}A)^k_l E^j_i \ot E^n_m,$$
or equivalently, moving the $R^{-1}$ and $\widetilde{R}$ to the LHS, and re-writing the RHS in matrix notation:
$$R_{21}D_1A_1^{-1}D_1^{-1}A_1R A_2 = A_2R_{21}D_1A_1^{-1}D_1^{-1}A_1R_{12}.$$

We simplify the RHS:
\begin{align*}RHS &= \underbrace{A_2R_{21}D_1}_{I(e,e^\vee)}A_1^{-1}D_1^{-1}A_1R\\
 &= R_{21}D_1\underbrace{R A_2 R_{21}A_1^{-1}}_{I(e)}D_1^{-1}A_1R\\
 &= R_{21}D_1 A_1^{-1}\underbrace{R A_2 R_{21}D_1^{-1}}_{I(e,e^\vee)}A_1R\\
 &= R_{21}D_1 A_1^{-1}D_1^{-1}R_{21}^{-1}\underbrace{A_2R_{21}A_1R}_{I(e)}\\
  &= R_{21}D_1 A_1^{-1}D_1^{-1}A_1RA_2,\\
\end{align*}
and thus the moment map condition is satisfied.
\end{proof}
\subsection{Quantum moment map for $\cD_q(\Mat_\dd(Q))$}
In the previous section, we defined moment maps $\mu^e_v:U_v\to\cD_e$ for every pair $(e,v)$, with $e$ attached to $v$.  In this section, we combine the edge maps into a homomorphism $\mu^\#_q:A(\cC)\to \cD_q(\Mat_\dd(Q))$, quantizing the moment map defined in the classical case.  First, we have:


%


\begin{lemma} For all $v, v'\in V$ distinct, and for all $e\in E_v, e'\in E_{v'}$, we have:
$$\mu_v^e(l^i_j)\mu_{v'}^{e'}(l^k_l)=\mu_{v'}^{e'}(l^k_l)\mu_v^e(l^i_j).$$
\end{lemma}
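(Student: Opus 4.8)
The plan is to split into the cases $e=e'$ and $e\neq e'$.

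\medskip
\noindent\emph{Case $e=e'$.} Since $v\neq v'$, the edge $e$ is not a loop and its two endpoints are exactly $v$ and $v'$; say $v=\beta(e)$ and $v'=\alpha(e)$, the opposite assignment being handled in the same way. By Definition-Proposition \ref{momentbeta} the matrix of $\mu^e_v$ is $M=I+(q-q^{-1})DA=g^\alpha$, while the matrix of $\mu^e_{v'}$ is $(I+(q-q^{-1})AD)^{-1}=(g^\beta)^{-1}$, in the notation of Proposition \ref{manyrelns}. Part (7) of that proposition, $g^\beta_1 g^\alpha_2=g^\alpha_2 g^\beta_1$, says precisely that every entry of $g^\beta$ commutes with every entry of $g^\alpha$ inside $\cD_q(e)^\circ$; conjugating this relation by $(g^\beta_1)^{-1}$ upgrades it to the statement that every entry of $(g^\beta)^{-1}$ commutes with every entry of $g^\alpha$, which is the desired identity. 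Thus this case is a one-line consequence of Proposition \ref{manyrelns}.

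\medskip
\noindent\emph{Case $e\neq e'$.} Here the idea is that $\mu^e_v(l^i_j)$ and $\mu^{e'}_{v'}(l^k_l)$ each live in a single, and distinct, $\boxtimes$-factor of $\cC=\bigboxtimes{w\in V}\cC_w$. Concretely, $\mu^e_v$ is a homomorphism of algebras in $\cC$ whose source is the algebra generated by the $l^i_j$, which is supported in the $\cC_v$-factor; hence so is its image, i.e.\ $\mu^e_v(l^i_j)$ is $U_w$-invariant for every $w\neq v$. (One can also read this off the defining formulas: in $\delta^i_j+(q-q^{-1})\partial^i_k a^k_j$ the summed index $k$ contracts the $W_{\alpha(e)}$-slot of $\partial(e)$ against the $W^*_{\alpha(e)}$-slot of $a(e)$, and similarly for the formula attached to the other endpoint and for the loop case, leaving only $i,j$ attached to $W_v$.) Likewise $\mu^{e'}_{v'}(l^k_l)$ is $U_w$-invariant for all $w\neq v'$. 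Since $v\neq v'$, for each vertex $w$ at least one of the two elements is $U_w$-invariant; because the braiding of $\cC$ is the diagonal product $\bigboxtimes{w}\sigma_w$ and each $\sigma_w$ restricts to the identity whenever one of its arguments is $\mathbf{1}_w$ (unit constraints being suppressed), the braiding carries $\mu^e_v(l^i_j)\ot\mu^{e'}_{v'}(l^k_l)$ to the plain tensor flip $\mu^{e'}_{v'}(l^k_l)\ot\mu^e_v(l^i_j)$. Since the multiplication in the braided tensor product $\cD_q=\overrightarrow{\bigotimes_{e\in E}}\cD_q(e)$ interchanges the factors $\cD_q(e)$ and $\cD_q(e')$ by exactly this braiding (Remark following Definition \ref{bqa}), the two elements commute.

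\medskip
I expect the only delicate point to be checking that ``support in the $\cC_v$-factor'' survives passage to the localizations $\cD_q(e)^\circ$. This should follow because the $q$-determinants one inverts are themselves $U_w$-invariant for $w\neq v$ --- they are built from the invariant combinations $DA$ and $AD$, or, when $e$ is a loop, from the reflection-equation generators $a(e)^i_j$ and $\partial(e)^i_j$ --- so $\cD_q(e)^\circ$ remains an algebra in $\cC$ supported in the $\cC_{\alpha(e)}\boxtimes\cC_{\beta(e)}$-factors, and the argument of the previous paragraph applies verbatim. Beyond this bookkeeping of supports I do not foresee a genuine obstacle.
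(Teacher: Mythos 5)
Your proof is correct and, if anything, a bit more careful than the one in the paper. For $e\neq e'$ you run essentially the paper's argument: each $\mu^e_v(l^i_j)$ lies in the trivial $\cC_w$-isotypic component for all $w\neq v$ (which survives the localization because the $q$-determinants one inverts are themselves $\cC_w$-invariant for $w\neq\alpha(e),\beta(e)$), and since $\cD_q$ is built as a braided tensor product over edges, elements from distinct edge factors braided-commute; the braiding is trivial because at each vertex at least one of the two elements is invariant, so they commute outright. The support bookkeeping you flag at the end is exactly the point that makes this robust.

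Where you genuinely add something is the case $e=e'$, a non-loop edge joining $v$ and $v'$. Here both $\mu^e_v$ and $\mu^e_{v'}$ land inside the \emph{same} edge algebra $\cD_q(e)^\circ$, so the ``braided tensor product of edge algebras'' mechanism that the paper invokes does not, by itself, force anything: commutativity inside a single $\cD_q(e)$ is an honest relation of that algebra, not a formal consequence of how the edge factors are glued. Your observation that the two images are precisely $g^\alpha$ and $(g^\beta)^{-1}$, together with Proposition~\ref{manyrelns}(7) ($g^\beta_1 g^\alpha_2 = g^\alpha_2 g^\beta_1$, then conjugate by $(g^\beta_1)^{-1}$), is exactly the missing ingredient, and since Proposition~\ref{manyrelns} is established in Section~\ref{q-Fourier} independently of Section~\ref{q-moment}, there is no circularity. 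So you recover the paper's argument in the $e\neq e'$ case and supply an explicit computation for the $e=e'$ case that the paper's proof glosses over.
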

\begin{proof}
We claim that, for any $e$ emanating from $v$ and for any $w\neq v$, the image of $\mu^e_v$ is contained in a trivial isotypic component of $\cC_{w}$.  This is obvious if $e$ is a loop, and for $e$ not a loop, it follows from the following, more canonical description of $\mu^e_v:$
$$\mu^e_v(l^i_j) = \ev(v^i\ot v_j) + (q-q^{-1})v^i\ot v_j \bt \coev(\mathbf{1}).$$
Since $\cD_q$ is defined as a braided tensor product over its edge algebras $\cD_e$, elements in the image of $\mu^e_v$ commute with those in the image of any $\mu^f_w$, via the braiding.  As the trivial representation braids trivially with any representation, the claim follows.
\end{proof}

\begin{remark}  At this point, we note that the ordering on $\overline{E}$ is not used in any construction, but rather the induced ordering on each $\overline{E_v}$.  This is consistent with similar observations in \cite{C-BS}, \cite{VdB2}.
\end{remark}

\begin{definition}\glossary{$\mu_v$} The vertex moment map $\mu_v^\#: A(\cC_v)\to\cD_q(\Mat_\dd(Q))$ is the composition:
$$\mu_v^\#:A(\cC_v)\xrightarrow{\Delta^{(|E_v|)}}A(\cC)^{\ot |E_v|}\xrightarrow{ \underset{e\in E_v}{\bigotimes}\mu^e_v} \bigotimes_{e\in E_v} \cD_e \subset \cD_q(\Mat_\dd(Q)).$$
\end{definition}

\begin{definition}\glossary{$\mu_q^\#$} The moment map $\mu_q^\#:T(A(\cC))\to \cD_q(\Mat_\dd(Q))$ is the external tensor product,
$$\mu_q^\#:=\underset{v\in V}{\boxtimes}\mu^\#_v: T(A(\cC))\cong \underset{v\in V}{\boxtimes}A(\cC_v)\to\cD_q(\Mat_\dd(Q)).$$
\end{definition}

It follows by Propositions \ref{mommap} and \ref{mommaploop} that $\mu_q^\#$ is indeed a moment map in the sense of \cite{L}.

\section{Construction of the quantized multiplicative quiver variety}\label{finalconstruction}
In this section, we are finally in a position to define the quantized multiplicative quiver variety.  First, we recall certain characters of $A(\cC_v)$, where \mbox{$\cC_v=U_q(\mathfrak{gl}_{d_v})$-lfmod}.  For a complete classification of the characters of $A(\cC_v)$, see \cite{Mu}.

\subsection{Quantum trace characters}
First, we observe that for all $\rho\in\CC$, there exists a unique homomorphism of algebras:
$$\operatorname{tr}_\rho: T(A(\cC_v))\to\CC,$$
$$ l^i_j\mapsto \rho \delta^i_j.$$

It is easily checked that the left coideal subalgebra $U'\subset U$ is stable under $\operatorname{tr}_\rho$, in the following sense: for $x\in U'$, we have $x_{(1)}\operatorname{tr}_\rho(x_{(2)})\in U'$.  Thus for any $\xi:V\to\CC^\times$, we may define the character,
$$\operatorname{tr}_{\xi}:=\bigotimes_{v\in V} \operatorname{tr}_{\xi_v}:T(A(\cC))\to\CC.$$  We set $\mathcal{I}_\xi:=\ker \operatorname{tr}_{\xi} \subset U$.\glossary{$\mathcal{I}_\xi$}






\subsection{Multiplicative quantized quiver variety}
\begin{definition}
Fix a quiver $Q$, its dimension vector $\dd$, and character $\xi: V\to \CC^\times$.  The multiplicative, quantized quiver variety, $\mathcal{A}^\xi_\dd(Q)$, is the quantum Hamiltonian reduction of $\cD_q(\Mat_\dd(Q))$ by the moment map $\mu_q^\#$.  That is,\glossary{$\mathcal{A}^\xi_\dd(Q)$}
$$\mathcal{A}^\xi_\dd(Q) := \Hom_\cC\left(\mathbf{1}, \cD_q(\Mat_\dd(Q)) \Big/ \cD_q(\Mat_\dd(Q)) \mu_q^\#(\mathcal{I}_\xi)\right).$$
\end{definition}

\begin{definition} We let $\cD_q(\Mat_\dd(Q))$-mod$_\cC$ denote the category of $\cD_q$-modules in the category $\cC$.
\end{definition}

The following is a localization theorem for the algebras $\mathcal{A}_\dd^\xi(Q)$, whose proof is identical to that of \cite{GG2}, Corollary 7.2.4.  We refer the reader to the excellent exposition there.

\begin{theorem} \glossary{$\mathbb{H}$}We have an essentially surjective functor,
$$\mathbb{H}:\cD_q(\Mat_\dd(Q))\textrm{-mod}_\cC\to \mathcal{A}^\xi_\dd(Q)\textrm{-mod},$$
$$M\mapsto\Hom_\cC(\mathbf{1},M),$$
inducing an equivalence of categories,
$$\mathbb{H}:\cD_q(\Mat_\dd(Q))\textrm{-mod}_\cC / \mathrm{Ker}\,  \mathbb{H}\to \mathcal{A}^\xi_\dd(Q)\textrm{-mod}.$$
\end{theorem}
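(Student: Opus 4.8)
The plan is to reproduce, in the present braided setting, the proof that quantum Hamiltonian reduction realizes $\mathcal{A}^\xi_\dd(Q)\textrm{-mod}$ as a Serre quotient of the category of equivariant modules; the statement is \cite{GG2}, Corollary 7.2.4, in a special case, and I follow its structure. Abbreviate $\cD:=\cD_q(\Mat_\dd(Q))$, $\mathcal{A}:=\mathcal{A}^\xi_\dd(Q)$, and let
$$P:=\cD\big/\cD\,\mu^\#_q(\mathcal{I}_\xi),$$
regarded as a left $\cD$-module object of $\cC$. The first step is to make $P$ into a $(\cD,\mathcal{A})$-bimodule in $\cC$. The right $\mathcal{A}$-action is induced by right multiplication on $\cD$: given a $\cC$-invariant element $d\in\cD$ representing a class in $\mathcal{A}=\Hom_\cC(\mathbf{1},P)$, the moment map relation of Propositions \ref{mommap} and \ref{mommaploop} shows that $\mu^\#_q(x)\,d=d\,\mu^\#_q(x)$ for $x$ in the source algebra (the adjoint action in the moment map identity collapses against an invariant element), whence $\bigl(\cD\,\mu^\#_q(\mathcal{I}_\xi)\bigr)d\subseteq\cD\,\mu^\#_q(\mathcal{I}_\xi)$, so right multiplication by $d$ descends to $P$; the same computation shows this is well defined on classes and identifies $\mathcal{A}$ with $\End_{\cD\textrm{-mod}_\cC}(P)^{\mathrm{op}}$.

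Next, using this bimodule structure, I would identify the reduction functor: for $M\in\cD\textrm{-mod}_\cC$, a morphism of $\cD$-modules $P\to M$ is determined by the image of the class of $1\in\cD$, a $\cC$-invariant vector killed by $\mu^\#_q(\mathcal{I}_\xi)$; conversely every $\cC$-invariant vector of an equivariant module is so annihilated (this is exactly where the moment map condition and $\mathcal{I}_\xi=\ker\operatorname{tr}_\xi$ enter, precisely as in \cite{GG2}), so that the natural map
$$\Hom_{\cD\textrm{-mod}_\cC}(P,M)\ \longrightarrow\ \Hom_\cC(\mathbf{1},M)=\mathbb{H}(M)$$
is a functorial isomorphism. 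Transporting the left $\mathcal{A}$-module structure from the source, this exhibits $\mathbb{H}$ as a functor $\cD\textrm{-mod}_\cC\to\mathcal{A}\textrm{-mod}$; it is exact because $\cC$ is semisimple and hence $\Hom_\cC(\mathbf{1},-)$ is exact, and it commutes with arbitrary direct sums because $\mathbf{1}$ has finite length.

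To finish I would exhibit a section. The functor $\mathbb{H}\cong\Hom_{\cD\textrm{-mod}_\cC}(P,-)$ has a left adjoint $\mathbb{H}^{\bullet}:=P\otimes_{\mathcal{A}}(-):\mathcal{A}\textrm{-mod}\to\cD\textrm{-mod}_\cC$, and the key claim is that the unit $N\to\mathbb{H}\mathbb{H}^{\bullet}N$ is an isomorphism for every $\mathcal{A}$-module $N$. For $N=\mathcal{A}$ this is the identification $\mathcal{A}=\End_{\cD\textrm{-mod}_\cC}(P)^{\mathrm{op}}$ from the first step; it then holds for all free modules $\mathcal{A}^{(I)}$ since $\mathbb{H}$ and $\mathbb{H}^{\bullet}$ commute with direct sums; and for general $N$ one picks a free presentation $\mathcal{A}^{(J)}\to\mathcal{A}^{(I)}\to N\to 0$, applies the right-exact functor $\mathbb{H}\mathbb{H}^{\bullet}$, and invokes naturality of the unit together with the five lemma. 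Granting this, $\mathbb{H}^{\bullet}$ is fully faithful and $\mathbb{H}$ is essentially surjective (as $N\cong\mathbb{H}\mathbb{H}^{\bullet}N$); since $\mathbb{H}$ is moreover exact, $\mathrm{Ker}\,\mathbb{H}$ is a Serre subcategory and the standard localization lemma (as in \cite{GG2}) yields the induced equivalence $\cD\textrm{-mod}_\cC/\mathrm{Ker}\,\mathbb{H}\xrightarrow{\ \sim\ }\mathcal{A}\textrm{-mod}$.

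The main obstacle, I expect, is concentrated in the first two steps: establishing the bimodule structure on $P$ and the identity $\mathcal{A}=\End_{\cD\textrm{-mod}_\cC}(P)^{\mathrm{op}}$ (hence the unit-isomorphism for free $N$), which is the substantive input coming from the moment map, together with the parallel verification that invariant vectors of equivariant $\cD$-modules are annihilated by $\mu^\#_q(\mathcal{I}_\xi)$; any Ind-algebra or completion bookkeeping forced by the infinite-dimensionality of $\cD$ and of $A(\cC)$ is handled exactly as in \cite{GG2}. The remaining homological-algebra steps — exactness, the five-lemma argument, and the passage to the Serre quotient — are routine.
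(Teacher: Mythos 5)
The paper does not prove this theorem directly; it defers entirely to \cite{GG2}, Corollary 7.2.4, so the relevant comparison is with that argument, which you are reconstructing. Your outline — set $P=\cD/\cD\mu^\#_q(\mathcal{I}_\xi)$, exhibit a $(\cD,\mathcal{A})$-bimodule structure on $P$ with $\mathcal{A}\cong\End_{\cD\text{-mod}_\cC}(P)^{\mathrm{op}}$, identify $\mathbb{H}$ with $\Hom_{\cD\text{-mod}_\cC}(P,-)$, and then run the adjunction $P\otimes_{\mathcal{A}}(-)\dashv\Hom(P,-)$ into the Gabriel localization lemma — is indeed the skeleton of that proof, and the routine homological steps at the end are fine.

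There is, however, a real gap at precisely the step you flag as the main obstacle. You assert that every $\cC$-invariant vector of an object $M\in\cD_q\text{-mod}_\cC$ is annihilated by $\mu^\#_q(\mathcal{I}_\xi)$, attributing this to the moment map identity. That identity, $\mu_q^\#(x)a=(x_{(1)}\rhd a)\mu_q^\#(x_{(2)})$, is a relation \emph{inside the algebra} $\cD_q$; it constrains how $\mu^\#_q(x)$ commutes past elements of $\cD_q$, but it places no constraint whatsoever on the action of $\mu^\#_q(x)$ on a module $M$, since the $\cD_q$-module structure and the $U_q$-module structure coming from $\cC$ are a priori independent data (subject only to the action map being a morphism in $\cC$). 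Concretely, $M=\cD_q$ itself, with the adjoint $\cC$-structure, is an object of $\cD_q\text{-mod}_\cC$; the unit $1$ is $\cC$-invariant, yet $\mu^\#_q(\mathcal{I}_\xi)\cdot 1=\mu^\#_q(\mathcal{I}_\xi)\neq 0$. So the natural injection $\Hom_{\cD\text{-mod}_\cC}(P,M)\hookrightarrow\Hom_\cC(\mathbf{1},M)$ is not surjective in general, and the $\mathcal{A}$-action on $\Hom_\cC(\mathbf{1},M)$ "by choosing a representative $d$" is not well defined for arbitrary $M$. In \cite{GG2} this does not arise because the category there consists of $(G,\chi)$-twisted equivariant modules, for which the $\g$-action through the moment map is forced to equal the differentiated group action up to the character. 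To make your argument go through in the braided setting you would either need to impose (or extract from the definitions) the analogous compatibility — that for $M\in\cD_q\text{-mod}_\cC$ the action of $\mu^\#_q$ on $\cC$-invariants is through $\tr_\xi$ — or simply take $\mathbb{H}=\Hom_{\cD\text{-mod}_\cC}(P,-)$ as the definition. Either way, this point is the substantive content the citation to \cite{GG2} is carrying, and it should be argued rather than asserted.
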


Here, $\mathrm{Ker}\, \mathbb{H}$ denotes the Serre subcategory of aspherical $\cD_q(\Mat_\dd(Q))$-modules, i.e. those modules whose space of invariants is zero.  The functor $\mathbb{H}$ is called the functor of Hamiltonian reduction.
\section{The degeneration of $A^\xi_\dd(Q)$}\label{degeneration}
\subsection{The Kassel-Turaev biquantization of $S(\g)$}
In order to compute the quasi-classical limit of $\cD_q(\Mat_\dd(Q))$ and its moment map $\mu_q^\#$, we need to recall from \cite{KT} the theory of biquantization of Lie bialgebras.  For $\g=\mathfrak{sl}_N$, the constructions we now recall here was also given by J. Donin \cite{Do}.  We begin with definitions.
\begin{definition}A co-Poisson algebra is a cocommutative coalgebra $C$, together with a Lie co-bracket $\delta:C\to C\wedge C$ satisfying the compatibility condition:
$$(\id\ot\Delta)\circ\delta = (\delta\ot \id + (\sigma\ot \id)\circ(\id\ot\delta))\circ\Delta.$$
\end{definition}
\begin{definition}
A bi-Poisson bialgebra is a commutative, cocommutative bialgebra $A$, together with a Poisson bracket and co-bracket, satisfying the compatibility conditions:
\begin{enumerate}
\item $\Delta(\{a,b\}) = \{\Delta(a),\Delta(b)\}$,
\item $\delta(ab) = \delta(a)\Delta(b) + \Delta(a)\delta(b)$,
\item $\delta(\{a,b\}) = \{\delta(a),\Delta(b)\} + \{\Delta(a),\delta(b)\}.$
\end{enumerate}
\end{definition}

Recall that for any vector space $V$, the symmetric algebra $S(V)$ is a bialgebra with coproduct:
$$\Delta(v)=v\ot 1 + 1\ot v.$$
A Lie bialgebra structure on $\g$ gives rise to a bi-Poisson bialgebra structure on the symmetric algebra $S(\g)$ by declaring the Poisson bracket and co-bracket be the unique extensions to $S(\g)$ of the Lie bracket and co-bracket on $\g$.  Consider $\g=\mathfrak{gl}_N$, and let
$$r:=\sum_{i<j} E^i_j\ot E^j_i + \frac12 \sum_i E^i_i\ot E^i_i\in \g\ot\g$$
denote the classical $r$-matrix for $\mathfrak{gl}_N$, associated to the trace form.  Of particular interest for us is the Lie bialgebra structure on $\mathfrak{gl}_N$, with cobracket $\delta: \g\to \g\ot \g$ given by:
$$\delta(x) := [r,x\ot 1 + 1\ot x],$$

In \cite{KT}, Kassel and Turaev constructed a $\CC[[u,v]]$ bialgebra $A(\g)=A_{u,v}(\g)$, which is a biquantization of $S(\g)$.  This means, firstly, that we have the following commutative diagram of bialgebras:
$$
\begin{CD} 
A(\g) @>>> A(\g)/(v)\\
@VVV @ VVV\\
A(\g)/(u) @>>> A(\g)/(u,v)\end{CD}$$
Secondly, we have natural isomorphisms  of coalgebras, algebras, and bialgebras, respectively:
$$A(\g)/(v)\cong S(\g)[[u]],\quad A(\g)/(u)\cong S(\g)[[v]],\quad A(\g)/(u,v)\cong S(\g).$$
In this sense, $A(\g)$ simultaneously quantizes the Poisson bracket and co-bracket on $S(\g)$: $v$ is the deformation parameter for the coproduct, and $u$ is the deformation parameter for the product.

Recall that the Etingof-Kazhdan quantization \cite{EK} of the Lie bialgebra $\g$ is a Hopf algebra $U_\hbar(\g)$, isomorphic as an algebra to $U(\g)$, but with coproduct which quantizes the co-bracket of $\g$.  Let  $V_u(\g):=A(\g)/(v)$, and let $A_{\hbar,\hbar}(\g)$ denote the quotient of $A_{u,v}(\g)$ by the ideal $(v-u)$ (in the quotient, we rename $\hbar:=u=v$   for notational convenience).  While we will not need to recall the full details of the construction of $A(\g)$, we will need the following descriptions of its quotients:
\begin{proposition}\cite{KT} \label{KTthm}
\begin{enumerate}
\item $A_{\hbar,\hbar}(\g)$ is the Etingof-Kazhdan quantization $U_{\hbar^2}(\g)$ of $\g$.\footnote{For $\g=\mathfrak{gl}_N$, this agrees with the Drinfeld-Jimbo quantization of $\mathfrak{gl}_N$.}
\item $V_u(\g) \cong T(\g) \Big / \langle X\ot Y - Y\ot X = u[X,Y] \,\, | \,\, X,Y\in \g\rangle.$
\end{enumerate}
\end{proposition}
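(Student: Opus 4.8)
The plan is to obtain both statements by unwinding the biquantization functor of Kassel--Turaev and supplying a few standard structural inputs. Recall that $A_{u,v}(\g)$ is constructed so that $v$ is the parameter deforming the \emph{coproduct} and $u$ the parameter deforming the \emph{product} of $S(\g)$, compatibly with the commutative square displayed above; I will use this throughout.

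For part (2): setting $v=0$ leaves the coalgebra structure of $V_u(\g)=A(\g)/(v)$ equal to the undeformed one on $S(\g)[[u]]$, so $V_u(\g)$ is a cocommutative $\CC[[u]]$-bialgebra with underlying coalgebra $S(\g)[[u]]$ and with a $\CC[[u]]$-linear associative product deforming the commutative product of $S(\g)$ and quantizing the Kirillov--Kostant bracket (this last fact is part of the definition of a biquantization, the Poisson bracket on $S(\g)$ being the one induced by $[\,,\,]$). By the topological ($u$-adic) version of the Cartier--Milnor--Moore theorem, such a bialgebra is the enveloping algebra $U_{\CC[[u]]}(\mathfrak{p})$ of its $\CC[[u]]$-Lie algebra of primitives; here $\mathfrak{p}\cong\g\ot\CC[[u]]$ with a $\CC[[u]]$-linear bracket $[\,,\,]_u$ that vanishes modulo $u$ (since $V_0(\g)=S(\g)$ is commutative) and whose coefficient of $u$ is $[\,,\,]$. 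Writing $[\,,\,]_u=u\,\beta_u$, the bracket $\beta_u$ is a formal one-parameter deformation of the Lie algebra $(\g,[\,,\,])$; such deformations are governed by $H^2(\g;\g)$, which vanishes for $\g=\mathfrak{gl}_N=\mathfrak{sl}_N\oplus\mathfrak{z}$ --- by a K\"unneth computation $H^2(\mathfrak{gl}_N;\mathfrak{gl}_N)\cong H^2(\mathfrak{sl}_N;\mathfrak{gl}_N)\oplus H^1(\mathfrak{sl}_N;\mathfrak{gl}_N)$, and both summands are zero by the two Whitehead lemmas together with $H^1(\mathfrak{sl}_N;\CC)=H^2(\mathfrak{sl}_N;\CC)=0$. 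Hence $\beta_u\cong[\,,\,]$, so $[\,,\,]_u\cong u[\,,\,]$, and by Poincar\'e--Birkhoff--Witt $V_u(\g)\cong U_{\CC[[u]]}(\g\ot\CC[[u]],\,u[\,,\,])=T(\g)/\langle X\ot Y-Y\ot X-u[X,Y]\rangle$, as claimed.

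For part (1): along the diagonal $u=v=\hbar$ the biquantization square specializes to exhibit $A_{\hbar,\hbar}(\g)$ as a $\CC[[\hbar]]$-Hopf algebra with associated graded $S(\g)$ that simultaneously quantizes the Kirillov--Kostant bracket and the co-bracket $\delta(x)=[r,\,x\ot 1+1\ot x]$, that is, as a quantization of $\mathfrak{gl}_N$ as a Lie bialgebra. By the Etingof--Kazhdan existence-and-uniqueness theorem, any such Hopf algebra is isomorphic --- after a rescaling of the deformation parameter --- to $U_\hbar(\g)$; the remaining point is to identify the rescaling, which I would pin down by tracking Kassel--Turaev's normalizations: computing the first nonvanishing order of $\sigma_{V,W}\sigma_{W,V}-\id$ on the diagonal, which should occur in order $\hbar^2$ with leading coefficient the Casimir, forcing the parameter to be $\hbar^2$. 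The footnote's comparison with the Drinfeld--Jimbo $U_q(\mathfrak{gl}_N)$ (here $q=e^{\hbar}$, matching the $R$-matrix \eqref{eqn:R}) is then the well-known fact that the Etingof--Kazhdan and Drinfeld--Jimbo quantizations of $\mathfrak{gl}_N$ coincide.

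The main obstacle is bookkeeping rather than conceptual: everything reduces to reading off from \cite{KT} which parameter controls which structure and, above all, getting the reparametrization of part (1) exactly right. Convention mismatches between the cited sources --- $r$ versus $r+r_{21}$, $\hbar$ versus $\hbar/2$, and the degree in which the Drinfeld associator enters --- are exactly where the factor $\hbar^2$ must be justified, and this is the step I would treat most carefully.
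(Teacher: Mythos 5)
The paper's ``proof'' here is pure citation: part (2) is Theorem 2.6 of Kassel--Turaev, and part (1) is read off from their construction of $A_{u,v}(\g)$ in Section 6 of that paper. You instead attempt a reconstruction from structural principles (CMM plus Lie-algebra rigidity for part (2), Etingof--Kazhdan uniqueness plus a normalization check for part (1)), which is a genuinely different route, and it is worth being clear about what it does and does not buy you.

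For part (2), your argument is sound in outline but narrower in scope than what it replaces. You invoke $H^2(\g;\g)=0$ to rigidify the bracket on the primitives; this is fine for $\mathfrak{gl}_N$ (and your K\"unneth/Whitehead computation of $H^2(\mathfrak{gl}_N;\mathfrak{gl}_N)=0$ is correct), but it means your proof only applies to reductive $\g$, whereas KT's Theorem 2.6 holds for an arbitrary finite-dimensional Lie bialgebra over $\CC$ by an explicit inspection of the defining relations of $A_{u,v}$, with no cohomological input at all. You also pass lightly over the topological version of Cartier--Milnor--Moore over $\CC[[u]]$ and over the flatness needed to identify the primitives with $\g\ot\CC[[u]]$; these are standard but deserve a sentence. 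The upside of your route is that it explains \emph{why} the isomorphism in (2) should hold without opening the KT black box; the downside is that the direct KT computation is both shorter and more general.

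For part (1), you correctly isolate the real content --- the reparametrization by $\hbar^2$ on the diagonal $u=v=\hbar$ --- but you do not actually carry it out: you say you ``would pin it down by tracking Kassel--Turaev's normalizations'' and explicitly flag this as the step you would treat most carefully. As written, that is a gap rather than a proof. The Etingof--Kazhdan uniqueness theorem does reduce the problem to a parameter identification, but the identification is precisely the nontrivial bookkeeping, and it is what the paper outsources to the KT definition of $A_{u,v}$. If you want a self-contained argument, you need to either compute the leading term of the quantum $R$-matrix of $A_{\hbar,\hbar}(\g)$ and match it against the classical $r$-matrix with the coefficient $\hbar^2$, or, more simply, do what the paper does and read the claim off from Section 6 of \cite{KT}. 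Citing Theorem 2.6 and Section 6 of \cite{KT}, as the paper does, is the cleanest resolution.
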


Claim (1) is not explicitly stated in \cite{KT} but follows easily from the definition of $A_{u,v}(\g)$ given in Section 6, {\it loc. cit.}.  Claim (2) is Theorem 2.6.  Note that, by (2), we have an $\CC$-algebra homomorphism,
$$i:V_u(\g)\to U(\g)[[u]],$$
$$X\in \g\mapsto u X.$$

It follows by the PBW theorem that $i$ is an injection.  We may therefore identify $V_u(\g)$ with the Rees algebra of $U(\g)$, where the latter is filtered by declaring the generating subspace $\g$ to be degree 1.  

Let $\mathcal{U}_\hbar$ denote the $\CC[[\hbar]]$-Hopf algebra (a.k.a QUE algebra) obtained by setting $q=e^\hbar$ in Section \ref{sec:QG}.  We have the following well-known proposition:

\begin{proposition}
There exists an isomorphism $\alpha: \mathcal{U}_\hbar\to U[[\hbar]]$ of QUE algebras, such that $\alpha=\id \mod \hbar$.  Moreover, we have $\alpha(\mathcal{U}_\hbar')=V_\hbar(\g)$.
\end{proposition}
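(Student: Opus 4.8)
The plan is to treat the two assertions in turn --- first the bare existence of $\alpha$, then the refinement that it can be arranged to carry $\mathcal{U}_\hbar'$ exactly onto $V_\hbar(\g)$ --- and to reconcile them at the end by a correction argument.

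For the existence of a $\CC[[\hbar]]$-algebra isomorphism $\alpha:\mathcal{U}_\hbar\to U[[\hbar]]$ with $\alpha\equiv\id\bmod\hbar$ (note $\alpha$ is not, and cannot be, a Hopf-algebra map, since the coproducts genuinely differ), I would appeal to the well-known rigidity of $U=U(\g)$ as an \emph{associative} algebra: a quantized enveloping algebra deforms its classical limit only through the coproduct. Concretely, in the Drinfeld-Jimbo presentation with $q=e^\hbar$ one sets $H_i:=\hbar^{-1}\log K_i$ (meaningful $\hbar$-adically since $K_i\equiv1\bmod\hbar$) and checks that $[H_i,H_j]=0$, $[H_i,E_j]=a_{ij}E_j$, $[H_i,F_j]=-a_{ij}F_j$ hold exactly, while $[E_i,F_j]$ reduces modulo $\hbar$ to $\delta_{ij}$ times a Cartan element and the $q$-Serre relations degenerate to the ordinary Serre relations; thus the $H_i,E_i,F_i$ already satisfy the $U(\g)$-relations modulo $\hbar$. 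One then corrects these generators order by order in $\hbar$, the obstruction at each stage lying in a Hochschild cohomology group of $U$ which vanishes (there are no nontrivial filtered deformations of $U(\g)$), and topological freeness of $\mathcal{U}_\hbar$ upgrades the resulting homomorphism to an isomorphism reducing to $\id$ modulo $\hbar$. I would cite Drinfeld (and Etingof--Kazhdan \cite{EK}) for this.

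For the second assertion I would work from the explicit generators of Theorem \ref{lfdesc}: $\mathcal{U}_\hbar'$ is generated over $\CC[[\hbar]]$ by the entries $\widetilde{l}^i_j$ of $\widetilde{L}=L^+S(L^-)$ together with $\operatorname{det}_q^{\pm1}$. The computation driving everything is that $\widetilde{L}\equiv I\bmod\hbar$ and that the first-order coefficient in $\widetilde{L}=I+\hbar\,M^{(1)}+O(\hbar^2)$ has entries $M^{(1)i}_j$ spanning a copy of $\g=\mathfrak{gl}_N$ inside $U=\mathcal{U}_\hbar/\hbar$ --- this is precisely the symbol of the quantum moment map, forced by the triangular shape of $L^\pm$: diagonal entries supply the Cartan, upper and lower entries the positive and negative root vectors, together exhausting $\mathfrak{gl}_N$. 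Likewise $\operatorname{det}_q$ is group-like and equals $\exp(\hbar z)+O(\hbar^2)$ for $z$ the central element of $\mathfrak{gl}_N$. Hence, filtering $\mathcal{U}_\hbar'$ by assigning degree $\le1$ to each $\widetilde{l}^i_j-\delta^i_j$, it is a flat, topologically free deformation of $S(\mathfrak{gl}_N)$ which is the Rees algebra of a filtered $\CC[[\hbar]]$-algebra whose fibre at $\hbar=0$ is $U(\mathfrak{gl}_N)$; by Kassel--Turaev (Proposition \ref{KTthm}, and Donin \cite{Do} for $\mathfrak{sl}_N$) this filtered algebra is $V_\hbar(\g)$, embedded in $U(\g)[[\hbar]]$ by the map $i$ recalled above.

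The genuine difficulty, which I expect to be the main obstacle, is that the isomorphism $\alpha$ produced in the first step respects only the \emph{algebra} structure, hence need not intertwine the adjoint actions, so $\alpha(\mathcal{U}_\hbar')$ need not coincide with the Rees subalgebra $V_\hbar(\g)\subset U[[\hbar]]$. I would remedy this by successive approximation: post-compose with a $\CC[[\hbar]]$-algebra automorphism $\gamma$ of $U[[\hbar]]$, $\gamma\equiv\id\bmod\hbar$, built one order of $\hbar$ at a time so that $\gamma\circ\alpha$ matches the order-$n$ part of the sub-deformation $\alpha(\mathcal{U}_\hbar')$ with the order-$n$ part of $V_\hbar(\g)$. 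The obstruction to continuing vanishes at every order because the two sub-deformations have the \emph{same} associated graded datum --- $S(\mathfrak{gl}_N)$ with its Rees (PBW) filtration inside $\operatorname{gr}U(\mathfrak{gl}_N)$ --- and this rigidity is exactly what the Kassel--Turaev biquantization, together with the Joseph--Letzter description of the locally finite part, supplies. Setting $\alpha':=\gamma\circ\alpha$ yields a single isomorphism with both required properties; and since $\alpha'(\mathcal{U}_\hbar')$ and $V_\hbar(\g)$ are topologically free, both reduce to $S(\mathfrak{gl}_N)$ modulo $\hbar$, and the induced map between these reductions is an isomorphism, the inclusion $\alpha'(\mathcal{U}_\hbar')\subseteq V_\hbar(\g)$ is automatically an equality, which finishes the proof.
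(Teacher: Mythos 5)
Your treatment of the first assertion --- existence of $\alpha$ via Drinfeld's rigidity of QUE algebras as (filtered) algebras --- is essentially what the paper invokes, and is fine. Likewise your first-order computation that $\widetilde{L}=I+\hbar M^{(1)}+O(\hbar^2)$ with the entries of $M^{(1)}$ spanning $\mathfrak{gl}_N$ is correct and is part of what the paper needs.

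Where you diverge, and where the gap lies, is in the second assertion. You treat $\alpha(\mathcal{U}_\hbar')=V_\hbar(\g)$ as something one must \emph{force} by post-composing with a correcting automorphism $\gamma$ of $U[[\hbar]]$, and you close the argument by asserting that the order-by-order obstruction to building $\gamma$ ``vanishes \ldots because the two sub-deformations have the same associated graded datum.'' That is not a proof. Having the same associated graded is a necessary condition, but it does not by itself imply that two $\CC[[\hbar]]$-subalgebras of $U[[\hbar]]$ are conjugate under an automorphism congruent to the identity mod $\hbar$; the obstruction lives in a specific cohomology group which you neither identify nor show to vanish. Neither Kassel--Turaev (which \emph{constructs} the biquantization $A_{u,v}(\g)$ and is not a rigidity statement) nor Joseph--Letzter (which describes $U'$) supplies the needed vanishing. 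In effect this step assumes what is to be proved.

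The paper's proof is shorter and avoids any correction. It uses that the generators $\widetilde{l}^i_j$ of $\mathcal{U}_\hbar'$ are exactly the matrix coefficients of the double braiding, $(\id\otimes\rho_{\CC^N})(R_{21}R)=\sum_{k,l}\widetilde{l}^k_l\otimes E^l_k$, and then invokes the structural fact (specific to the Drinfeld/Etingof--Kazhdan isomorphism $\alpha$) that $(\alpha\otimes\alpha)(R_{21}R)\in V_\hbar(\g)^{\otimes 2}$. Contracting the second tensor factor by $\rho_{\CC^N}$ then puts $\alpha(\widetilde{l}^i_j)$ directly in $V_\hbar(\g)$, so the inclusion $\alpha(\mathcal{U}_\hbar')\subseteq V_\hbar(\g)$ holds \emph{to all orders} without any post-composition; the reverse inclusion then follows from your first-order observation together with topological freeness. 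So the ``genuine difficulty'' you flag is resolved not by correcting $\alpha$ but by choosing the Etingof--Kazhdan $\alpha$ from the outset and using the degree bound built into its $R$-matrix, which is the piece your argument is missing.
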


\begin{proof}
Recall that the generators $\tilde{l}^i_j$ of $\mathcal{U}_\hbar$ may be obtained as the matrix coefficients of the double-braiding:
$$(\id\ot\rho_{\CC^N})(R_{21}R) = \sum_{kl} \tilde{l}^k_l \ot E^l_k.$$
The claim now follows from the fact that $\alpha\ot\alpha(R_{21}R)\in V_\hbar(\g)^{\ot 2}$.
\end{proof}

\subsection{Flatness is preserved by quantum Hamiltonian reduction}\label{flatness-sec}
Throughout this section, we assume $Q$ and $\dd$ satisfy the conditions of Theorem \ref{CBflat}, so that the classical moment map $\mu:T^*\Mat_\dd(Q)\to\g^\dd$ is flat.  We set $q=e^\hbar$, and consider all algebras and categories defined in terms of $q$ to be defined over $\CC[[\hbar]]$, and complete in the $\hbar$-adic topology.  As a consequence of the flatness of $\mu$, we prove that the algebra $A_\dd^\xi(Q)$ is a flat formal deformation of its classical ($\hbar=0$) limit.  We note that similar results have been proven in \cite{Lo}, Lemma 3.6.1, and \cite{Br}.

To begin, we recall the following lemma from ring theory (see, e.g. \cite{B}, Chapter 2, Proposition 3.12):

\begin{lemma} Let $A_0$, be a graded ring, and $M_0$ a flat $A_0$-module.  Let $A$ be a ring with an exhaustive, increasing filtration, and $M$ an $A$-module with compatible filtration, such that $gr(A)\cong A_0$, and $gr(M)\cong M_0$ as $A_0$-modules.  Then $M$ is a flat $A$-module.
\end{lemma}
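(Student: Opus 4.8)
The plan is to reduce flatness of $M$ over $A$ to the classical ideal criterion for flatness, and then to transfer the resulting injectivity question, via the associated graded, to flatness of $M_0$ over $A_0$, which is assumed. Recall that $M$ is flat over $A$ if and only if for every finitely generated right ideal $I\subseteq A$ the multiplication map $m\colon I\otimes_A M\to M$, $x\otimes y\mapsto xy$, is injective (equivalently $\operatorname{Tor}_1^A(A/I,M)=0$). So I would fix such an $I$ and aim to show $\ker m=0$.

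First I would install compatible filtrations everywhere: give $I$ the induced filtration $I_n:=I\cap A_n$ and $A/I$ the quotient filtration. With these choices, applying $\operatorname{gr}$ to $0\to I\to A\to A/I\to 0$ preserves exactness, yielding a short exact sequence of graded $A_0$-modules
$$0\longrightarrow \operatorname{gr}(I)\longrightarrow A_0 \longrightarrow \operatorname{gr}(A/I)\longrightarrow 0,$$
which in particular exhibits $\operatorname{gr}(I)$ as a graded right ideal of $A_0=\operatorname{gr}(A)$. Next, equip $I\otimes_A M$ with the tensor-product filtration $(I\otimes_A M)_n:=\sum_{p+q\le n}\operatorname{im}\bigl(I_p\otimes_A M_q\to I\otimes_A M\bigr)$; then $m$ is a morphism of filtered modules of degree $0$, and there is a canonical surjection of graded $A_0$-modules
$$\pi\colon \operatorname{gr}(I)\otimes_{A_0}\operatorname{gr}(M)\;\twoheadrightarrow\;\operatorname{gr}(I\otimes_A M).$$

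The heart of the argument is a short diagram chase on associated gradeds. Using the identifications $\operatorname{gr}(A)\cong A_0$ and $\operatorname{gr}(M)\cong M_0$, the composite
$$\operatorname{gr}(I)\otimes_{A_0}M_0 \;\xrightarrow{\ \pi\ }\; \operatorname{gr}(I\otimes_A M)\;\xrightarrow{\ \operatorname{gr}(m)\ }\; \operatorname{gr}(M)=M_0$$
is exactly the multiplication map obtained by tensoring the inclusion $\operatorname{gr}(I)\hookrightarrow A_0$ with $M_0$. Since $M_0$ is flat over $A_0$, this composite is injective; hence $\pi$ is injective, therefore an isomorphism, and consequently $\operatorname{gr}(m)$ is injective. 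Finally, an injective associated graded forces the filtered map itself to be injective once the source filtration is separated: if $0\ne x\in\ker m$, let $n$ be minimal with $x\in (I\otimes_A M)_n$, so its symbol $\sigma_n(x)\ne 0$ while $\operatorname{gr}(m)(\sigma_n(x))=\sigma_n(m(x))=\sigma_n(0)=0$, a contradiction. Thus $m$ is injective, and $M$ is flat.

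The step I expect to be the genuine obstacle — the only point requiring care beyond routine bookkeeping — is this last one: passing from injectivity of $\operatorname{gr}(m)$ back to injectivity of $m$. This needs the tensor-product filtration on $I\otimes_A M$ to be separated, which is not automatic for an arbitrary exhaustive increasing filtration but holds, for instance, whenever the filtrations on $A$ and $M$ are bounded below — as they are for the PBW filtration on $\cD_q$ relevant to the application — or in the $\hbar$-adically complete setting of Section \ref{flatness-sec}. One should also record the two standard facts used as black boxes: that $\operatorname{gr}$ takes the above short exact sequence of filtered modules to an exact sequence, and that the canonical map $\pi$ is always surjective; both are elementary verifications on homogeneous components.
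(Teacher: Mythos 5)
The paper gives no proof of this lemma—it cites Bj\"ork's book—so there is no in-paper argument to compare against; I evaluate your proof on its own terms, and it is correct and is essentially the standard one from that reference: reduce to the ideal criterion for a finitely generated right ideal $I$, pass to associated gradeds, identify $\operatorname{gr}(m)\circ\pi$ with the multiplication map $\operatorname{gr}(I)\otimes_{A_0}M_0\to M_0$, invoke flatness of $M_0$ to get that $\operatorname{gr}(m)$ is injective, and lift back. Two cosmetic points worth recording: the groups $I_p\otimes_A M_q$ in your filtration on $I\otimes_A M$ should be read as $\mathbb{Z}$-tensor products (or simply as the images of the evident bilinear maps), since $I_p$ and $M_q$ are not $A$-modules; and $\operatorname{gr}(I)$ sits inside $A_0$ as a graded right ideal precisely because you gave $I$ the induced filtration and $A/I$ the quotient filtration, so that $\operatorname{gr}$ is exact on $0\to I\to A\to A/I\to 0$. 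The one substantive point is exactly the one you flag: the implication ``$\operatorname{gr}(m)$ injective $\Rightarrow m$ injective'' needs the filtration on $I\otimes_A M$ to be separated, and ``exhaustive, increasing'' alone does not provide this. The lemma as stated tacitly assumes a bounded-below (or otherwise separated) filtration; in the paper's sole application, Lemma \ref{A0lem}, the filtrations are of PBW type and hence nonnegative, so $(I\otimes_A M)_n=0$ for $n<0$ and the argument closes. You have correctly isolated the only place where care is required.
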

\begin{corollary}\label{assgrflat} Let $A_0, B_0$ be a graded rings, with a flat homomorphism $\phi_0:B_0\to A_0$ (i.e. $\phi$ makes $A_0$ into a flat left $B_0$-module).  Let $A,B$ be rings equipped with exhaustive, increasing filtrations, such that $gr(A)=A_0, gr(B)=B_0$. Then any filtered homomorphism $\phi:B\to A$ lifting $\phi_0$ is flat.
\end{corollary}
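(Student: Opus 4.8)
The plan is to deduce the Corollary directly from the preceding Lemma, applied with a relabelling of the data: we take the graded ring to be $B_0$, the flat module over it to be $A_0$, the filtered ring to be $B$, and the module with compatible filtration to be $A$ itself, viewed as a left $B$-module via $\phi$.

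First I would record that, since $\phi\colon B\to A$ is a filtered ring homomorphism, $A$ becomes a filtered left $B$-module in the evident way: for $b\in F_iB$ and $a\in F_jA$ one has $\phi(b)a\in F_iA\cdot F_jA\subseteq F_{i+j}A$, using $\phi(F_iB)\subseteq F_iA$ together with the fact that the filtration on $A$ is a ring filtration. Both filtrations are exhaustive and increasing by hypothesis, so this module filtration on $A$ is compatible with the ring filtration on $B$ in exactly the sense required by the Lemma.

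Next I would identify the associated graded objects. By hypothesis $gr(B)\cong B_0$ as graded rings and $gr(A)\cong A_0$ as graded abelian groups; the point to check is that the $gr(B)=B_0$-module structure induced on $gr(A)$ by the filtered $B$-action is precisely the one given by $\phi_0\colon B_0\to A_0$. This is what ``$\phi$ lifts $\phi_0$'' means: on associated graded pieces $gr(\phi)=\phi_0$, so for homogeneous $\bar b\in B_0$ and $\bar a\in A_0$ the product in the $gr(B)$-module $gr(A)$ equals $\phi_0(\bar b)\,\bar a$. Hence $gr(A)\cong A_0$ as left $B_0$-modules, and $A_0$ is flat over $B_0$ by hypothesis.

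Finally I would invoke the Lemma with the substitution $A_0\rightsquigarrow B_0$, $M_0\rightsquigarrow A_0$, $A\rightsquigarrow B$, $M\rightsquigarrow A$: we have a ring $B$ with exhaustive increasing filtration, a $B$-module $A$ with compatible filtration, $gr(B)\cong B_0$, and $gr(A)\cong A_0$ flat over $B_0$. The Lemma yields that $A$ is a flat left $B$-module, i.e.\ that $\phi$ is flat, which is the assertion. The only step where the hypotheses are genuinely used — and hence the one requiring care — is the identification of the $B_0$-module structure on $gr(A)$ with the one coming from $\phi_0$; the rest is formal bookkeeping about filtered objects, and I would keep it brief.
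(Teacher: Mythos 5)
Your argument is exactly the one the paper intends: the Corollary is stated with no separate proof because it is the Lemma applied verbatim under the substitution $A_0\rightsquigarrow B_0$, $M_0\rightsquigarrow A_0$, $A\rightsquigarrow B$, $M\rightsquigarrow A$, with $A$ regarded as a filtered left $B$-module via $\phi$. You correctly identify the one nontrivial point — that $gr(A)$ inherits the $B_0$-module structure given by $\phi_0$ because $\phi$ lifts $\phi_0$ — so the proof is complete and matches the paper's route.
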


\begin{lemma}\label{A0lem} Let $A_0$ be a graded Poisson algebra with
a Poisson action of a reductive group $G$, and $\mu_0 :S\g\to A_0$
be a moment map for this action. Let A be a filtered algebra with
$gr(A)=A_0$, and $\mu: U(\g)\to A$ a quantum moment map that lifts
$\mu_0$ (so that the adjoint action is completely reducible).
If $\mu_0$ is flat, then so is $\mu$ (i.e. A is flat as a left $U(\g)$-module), and $gr(A//\g)=A_0//\g$.
\end{lemma}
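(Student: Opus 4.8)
The first assertion is an immediate application of Corollary~\ref{assgrflat}. Take $B_0=S\g$ and let $B=U(\g)$ be equipped with the (PBW-type) filtration for which $gr(B)=B_0$, compatibly with the grading used for $\mu_0$; take $\phi_0=\mu_0$ and $\phi=\mu$. Since $\mu$ is a filtered homomorphism lifting $\mu_0$ and $\mu_0$ is flat, Corollary~\ref{assgrflat} gives that $\mu$ is flat, i.e.\ $A$ is flat as a left $U(\g)$-module.

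For the statement about associated graded algebras, recall $A//\g=(A/A\mu(\g))^G$ and $A_0//\g=(A_0/A_0\mu_0(\g))^G$. Because $G$ is reductive and, by hypothesis, acts rationally (completely reducibly) on $A$ preserving the filtration, it acts in the same way on $A/A\mu(\g)$; hence the functor of $G$-invariants is exact and commutes with passage to the associated graded, so $gr\big((A/A\mu(\g))^G\big)=\big(gr(A/A\mu(\g))\big)^G$. It therefore suffices to prove $gr(A/A\mu(\g))=A_0/A_0\mu_0(\g)$. Since $\mu$ lifts $\mu_0$, there is a canonical surjection $gr(A/A\mu(\g))\twoheadrightarrow A_0/A_0\mu_0(\g)$ of graded algebras, and the content is that it is injective.

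I would deduce this via the Chevalley--Eilenberg complex. Fix a basis $x_1,\dots,x_n$ of $\g$. Since $A$ is flat over $U(\g)$ by the first paragraph, the complex $A\otimes\Lambda^\bullet\g$ with differential $a\otimes\omega\mapsto\sum_i(-1)^{i+1}\,a\mu(x_i)\otimes\iota_{x_i}\omega+\sum_{i<j}(-1)^{i+j}\,a\otimes[x_i,x_j]\wedge\iota_{x_i}\iota_{x_j}\omega$ computes $\mathrm{Tor}^{U(\g)}_\bullet(A,\CC)$ and so is a resolution of $A\otimes_{U(\g)}\CC=A/A\mu(\g)$. Filter $A\otimes\Lambda^\bullet\g$ by $G_j(A\otimes\Lambda^k\g):=F_{j-k}(A)\otimes\Lambda^k\g$ (with $k$ replaced by $2k$ when the moment map is quadratic, as for quiver varieties); with this shift the ``Koszul'' part $\sum_i(-1)^{i+1}a\mu(x_i)\otimes\iota_{x_i}\omega$ preserves $G_\bullet$ while the ``bracket'' part strictly lowers it, so the associated graded complex is the Koszul complex $K_\bullet\big(\mu_0(x_1),\dots,\mu_0(x_n);A_0\big)$, using that the symbol of $\mu(x_i)$ is $\mu_0(x_i)$. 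On the other hand, flatness of $\mu_0$ means precisely that $A_0$ is flat over $S\g$, so $\mathrm{Tor}^{S\g}_{>0}(A_0,\CC)=0$; computing this $\mathrm{Tor}$ with the Koszul resolution of $\CC$ over $S\g$ shows $K_\bullet(\mu_0(x_i);A_0)$ has homology concentrated in degree $0$, equal to $A_0/A_0\mu_0(\g)$. Feeding this into the spectral sequence of the filtered complex $A\otimes\Lambda^\bullet\g$ (which converges, the complex being bounded and the filtration exhaustive and bounded below in each homological degree), one gets $E_1=A_0/A_0\mu_0(\g)$ in degree $0$ and $0$ elsewhere; the sequence degenerates and yields $gr\big(H_0(A\otimes\Lambda^\bullet\g)\big)=A_0/A_0\mu_0(\g)$, i.e.\ $gr(A/A\mu(\g))=A_0/A_0\mu_0(\g)$. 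Combining with the previous paragraph, $gr(A//\g)=A_0//\g$.

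The main obstacle is the bookkeeping around the filtered Chevalley--Eilenberg complex: one must pin down the filtration on $A\otimes\Lambda^\bullet\g$ so that (i) the quantum moment map property forces the bracket part of the differential to be of strictly lower order than the Koszul part, so that the associated graded differential is exactly the classical Koszul differential for $\{\mu_0(x_i)\}$, and (ii) the resulting spectral sequence converges. Once the filtration is set up correctly, the two remaining inputs --- flatness of $\mu_0$ forces the Koszul complex on $A_0$ to be a resolution, and a bounded-below filtered complex whose associated graded is acyclic in positive degrees is itself acyclic there with $gr H_0$ computed by the graded complex --- are standard.
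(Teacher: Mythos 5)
Your argument is correct and follows the same skeleton as the paper's proof: flatness of $\mu$ from Corollary~\ref{assgrflat} with $B_0=S\g$, $B=U(\g)$; then complete reducibility to pass invariants through the quotient and through $gr$. The one place where you go beyond the paper is the middle step, $gr(A/A\mu(\g))=A_0/A_0\mu_0(\g)$: the paper simply writes ``by flatness of $\mu$, we have $gr(A/J)=A_0/J_0$'' and moves on, whereas you supply the actual justification via the filtered Chevalley--Eilenberg/Koszul complex $A\otimes\Lambda^\bullet\g$ and its spectral sequence, using that the associated graded differential is the classical Koszul differential for $\{\mu_0(x_i)\}$ and that flatness of $\mu_0$ makes this complex a resolution of $A_0/J_0$. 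That is indeed what the paper's one-line assertion is really resting on; note that the same spectral sequence yields $Tor^{U(\g)}_{>0}(A,\CC)=0$ as a byproduct, so it actually establishes both conclusions of the lemma simultaneously from flatness of $\mu_0$, making the detour through flatness of $\mu$ unnecessary. You are also right that, in the applications here, the filtration shift must be by $2k$ since the classical quiver moment map is quadratic, so $\mu_0(\g)$ sits in degree $2$; stating one uniform shift matching the degree of $\mu_0(\g)$ would tighten the write-up.
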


\begin{proof}
The flatness of $A$ as a left $U(\g)$-module is an application of Lemma \ref{assgrflat}, with $B_0=S(\g)$ and $B=U(\g)$.  The Hamiltonian reduction $A//\g$ proceeds in two steps: first we construct the quotient $A/J$ of $A$ by its left ideal $J=A\mu(U(\g))\subset A$, and then we take the subspace of invariants in the quotient.  We show that each step is compatible with the filtration, and commutes with the associated graded construction.

The module $A/J$ inherits a filtration, and by flatness of $\mu$, we have $gr(A/J)=A_0/J_0$, where $J_0=A_0\mu_0(S(\g))$. Since the adjoint action of $\g$ on $A$ is completely reducible, and $J$ is $\g$ invariant, we have that the quotient $A/J$ embeds as a $\g$-submodule of $A$, and likewise $J^\g$ embeds as a submodule of $A^\g$.  Thus we have $(A/J)^\g\cong A^\g/J^\g$.  Finally, the action of $\g$ preserves the filtration on $A$, so we have:
$$gr(A/J)^\g \cong gr(A^\g/J^\g) \cong (A_0/ J_0)^\g = A_0//\g,$$
as desired.
\end{proof}
\begin{lemma}\label{defmod} Let $\mu_h$ be a deformation of the classical moment map $\mu$, $\mu_\hbar: U_\hbar(\g)\to A_\hbar$, where $A_\hbar$ is a flat deformation of $A$. Assume that the adjoint action is completely reducible. Then $\mu_\hbar$ is flat, and $A_\hbar//U_\hbar(\g)$ is a flat formal deformation (equivalently, it is torsion-free in $\hbar$).
\end{lemma}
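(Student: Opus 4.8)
The plan is to deduce both assertions---flatness of $\mu_\hbar$ and $\hbar$-torsion-freeness of $A_\hbar//U_\hbar(\g)$---from the corresponding classical facts by reduction modulo $\hbar$, using the ``flatness in a formal family'' principle. Recall the formal setup: $A_\hbar$ is by hypothesis topologically free over $\CC[[\hbar]]$, hence $\hbar$-torsion-free, $\hbar$-adically separated and complete, with $A_\hbar/\hbar A_\hbar\cong A$; the same holds for $U_\hbar(\g)$, with $U_\hbar(\g)/\hbar U_\hbar(\g)\cong U(\g)$; and $\mu_\hbar\bmod\hbar$ is the classical quantum moment map $\mu\colon U(\g)\to A$, which is flat by Lemma \ref{A0lem} (this is the point at which the running Crawley--Boevey hypothesis of Theorem \ref{CBflat} on $Q,\dd$ is used). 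So the input is a flat map of rings, deformed over $\CC[[\hbar]]$, and the goal is to propagate flatness and to carry out Hamiltonian reduction in the deformed setting.

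First I would prove that $\mu_\hbar$ is flat, i.e.\ that $A_\hbar$ is flat as a left $U_\hbar(\g)$-module. The $\hbar$-adic filtrations now play the role that the order filtration played in Lemma \ref{A0lem}: since $\hbar$ is central and a non-zero-divisor, $\operatorname{gr}_\hbar U_\hbar(\g)\cong U(\g)[\hbar]$ and $\operatorname{gr}_\hbar A_\hbar\cong A[\hbar]$, and $\operatorname{gr}_\hbar\mu_\hbar\cong\mu\otimes_\CC\operatorname{id}_{\CC[\hbar]}$, which is flat, being the extension of scalars of the flat map $\mu$. One then applies the complete-filtered analogue of Corollary \ref{assgrflat}: a filtered homomorphism of $\hbar$-adically complete rings that is flat on associated graded is itself flat. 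Concretely one checks that $\operatorname{Tor}_1^{U_\hbar(\g)}(-,A_\hbar)$ vanishes on cyclic modules by d\'evissage along the $\hbar$-adic filtration, completeness of $A_\hbar$ being exactly what upgrades the graded vanishing to the filtered one. I expect \emph{this} step to be the main obstacle: unlike Lemma \ref{A0lem}, the $\hbar$-adic filtration is decreasing and the modules are infinite-dimensional over $\CC$, so Corollary \ref{assgrflat} does not apply verbatim and the completeness hypothesis must genuinely be invoked---this is precisely the subtlety flagged in the introduction, that the $\ZZ$-grading with finite-dimensional pieces does not descend here.

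With flatness of $\mu_\hbar$ established, the Hamiltonian reduction argument mirrors the two steps of Lemma \ref{A0lem}. Let $\mathcal I\subset U_\hbar(\g)$ be the kernel of the relevant character (in the application $\mathcal I=\mathcal I_\xi$, so $U_\hbar(\g)/\mathcal I\cong\CC[[\hbar]]$ as the character module $\CC_\xi$), set $J_\hbar=A_\hbar\,\mu_\hbar(\mathcal I)$, and identify $A_\hbar/J_\hbar\cong A_\hbar\otimes_{U_\hbar(\g)}\CC_\xi$ as $\CC[[\hbar]]$-modules. From the short exact sequence $0\to\CC_\xi\xrightarrow{\ \hbar\ }\CC_\xi\to\CC_\xi/\hbar\to 0$ of $U_\hbar(\g)$-modules and the long exact $\operatorname{Tor}^{U_\hbar(\g)}_\bullet(A_\hbar,-)$ sequence, flatness of $\mu_\hbar$ forces $\operatorname{Tor}_1^{U_\hbar(\g)}(A_\hbar,\CC_\xi/\hbar)=0$, so multiplication by $\hbar$ is injective on $A_\hbar/J_\hbar$; moreover $(A_\hbar/J_\hbar)/\hbar\cong A\otimes_{U(\g)}\CC_\xi=A/J$, the classical quotient. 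Thus $A_\hbar/J_\hbar$ is $\hbar$-torsion-free with the expected special fibre. Finally, by hypothesis the adjoint action is completely reducible, so the functor $\operatorname{Hom}_\cC(\mathbf 1,-)$ of extracting $\mathbf 1$-isotypic components is exact and commutes with $-\otimes_{\CC[[\hbar]]}\CC$; applying it, $A_\hbar//U_\hbar(\g)=\operatorname{Hom}_\cC(\mathbf 1,\,A_\hbar/J_\hbar)$ is a direct summand of the $\hbar$-torsion-free, $\hbar$-adically complete module $A_\hbar/J_\hbar$, hence is itself $\hbar$-torsion-free and complete with reduction $A//U(\g)$---that is, a flat formal deformation, as claimed.
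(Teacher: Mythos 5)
Your route diverges from the paper's in the first and hardest step: you attempt to establish that $\mu_\hbar$ is flat (i.e.\ that $A_\hbar$ is flat as a $U_\hbar(\g)$-module) by an $\hbar$-adic filtered/associated-graded argument, and you correctly flag that this is the main obstacle since Corollary~\ref{assgrflat} is stated for exhaustive increasing filtrations and does not transfer verbatim to the decreasing $\hbar$-adic one. You leave this step as a sketch (``one checks $\operatorname{Tor}_1$ vanishes on cyclic modules by d\'evissage \ldots completeness upgrades graded vanishing''), and as written it is a genuine gap: the lift ``$\operatorname{gr}_\hbar$ flat $\Rightarrow$ flat'' needs a complete-filtered flatness criterion whose hypotheses (e.g.\ some Noetherianness or finiteness to control inverse limits) you neither state nor verify.

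The paper sidesteps this entirely. Rather than proving $\mu_\hbar$ flat in the strong sense, its embedded ring-theoretic lemma shows directly that $A_\hbar\otimes_{U_\hbar(\g)}\chi\cong A_\hbar/J_\hbar$ is $\hbar$-torsion-free, by relating $\operatorname{Tor}^{\CC[[\hbar]]}_i(A_\hbar\otimes_{U_\hbar(\g)}\chi,\CC)$ to $\operatorname{Tor}^{U(\g)}_i(A,\chi_0)$ via a universal-coefficients/change-of-rings argument, and then invoking complete reducibility for the invariants exactly as you do. In fact your own long-exact-sequence step only uses $\operatorname{Tor}_1^{U_\hbar(\g)}(A_\hbar,\CC_\xi/\hbar)=0$, and this vanishing can be obtained directly: $\CC_\xi/\hbar$ is a $U(\g)$-module, and since $A_\hbar$ is $\hbar$-torsion-free the change-of-rings spectral sequence collapses to give $\operatorname{Tor}_i^{U_\hbar(\g)}(A_\hbar,\CC_\xi/\hbar)\cong\operatorname{Tor}_i^{U(\g)}(A,\chi_0)=0$ for $i>0$. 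So the full flatness of $\mu_\hbar$ is not actually needed for the torsion-freeness of the reduction, and the detour through it is where your argument is incomplete. Your final step — complete reducibility of the adjoint action making $\operatorname{Hom}_\cC(\mathbf 1,-)$ exact and compatible with reduction mod $\hbar$, so that the invariants inherit torsion-freeness as a direct summand — matches the paper's.
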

\begin{proof}
First, we show that $\mu_\hbar$ is flat.  For this, we recall another lemma from ring theory.  While the proof is standard, we include it here for the sake of completeness.

\begin{lemma} Let $S$ be a (not necessarily commutative) flat formal deformation of the algebra $S_0=\CC[x_1,\ldots, x_n]$.  Let $\chi:S\to\CC[[\hbar]]$ be a character, specializing to $\chi_0:S_0\to \CC$.  Finally, suppose that $M$ is an $S$-module, topologically free over $\CC[[\hbar]]$, such that $M_0=M/\hbar M$ is flat over $S_0$.  Then $M\ot_S\chi$ is a flat formal deformation of $M_0\ot_{S_0}\chi_0$.
\end{lemma}

\begin{proof}
We denote by $\CC$ the one dimensional $\CC[[\hbar]]$-module, where $\hbar$ acts by zero.  We have only to check:
$$\operatorname{Tor}^i_{\CC[[\hbar]]}(M\ot_S\chi,\CC)\overset{?}{=}0.$$
Notice that we have an isomorphism, natural in $M$:
$$M\ot_S\chi\ot_{\CC[[\hbar]]}\CC\cong M\ot_{\CC[[\hbar]]}\CC\ot_{S_0}\chi_0 \cong M_0\ot_{S_0}\chi_0.$$
Thus, we have:
$$\operatorname{Tor}^i_{\CC[[\hbar]]}(M\ot_S\chi,\CC)\cong \operatorname{Tor}^i_{S_0}(M,\chi_0)=0,$$
by assumption of flatness on $M$.
\end{proof}

We now turn to proving the flatness of $A_\hbar//U_\hbar(\g)$.  We note that Hamiltonian reduction involves fixing a scalar action of $\mathfrak{gl}_N$, so that $A//G$ is completely reducible as a $U(\g)$-module.  By the flatness of $\mu_\hbar$, $A_\hbar/J_\hbar$ is a flat $\CC[[\hbar]]$-module.  Finally, complete reducibility gives an isomorphism $(A_\hbar/J_\hbar)^{U_\hbar(\g)}\cong (A/J)^\g[[\hbar]]$, as $\CC[[\hbar]]$-modules, because completely reducible $\g$-modules do not admit non-trivial deformations.   
\end{proof}

\begin{proposition}\label{Ilambdaflat} Let $t=\hbar$, and let $\xi_{v}:=e^{\hbar^2\lambda_{v}}$, for some $\xi:V\to\CC$.  Then quasi-classical limit of the ideal $\mathcal{I}_\xi$ is the classical moment ideal, i.e. the defining ideal of the closed set $\mu^{-1}(\sum \lambda_v\id_v)$.
\end{proposition}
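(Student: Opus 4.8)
The plan is to reduce the statement to an $\hbar$-expansion of $\mu_q^\#$ on the matrix generators of $A(\cC)$. Throughout set $q=e^\hbar$, $t=\hbar$ and $\xi_v=e^{\hbar^2\lambda_v}$, and work over $\CC[[\hbar]]$, $\hbar$-adically complete. First I would record the classical limit of the target: by Theorem~\ref{flatness}, together with the fact that with $t=\hbar$ the $\hbar$-linear part of the defining relations of $\cD_q(e)$ degenerates it to a commutative algebra, $\cD_q(\Mat_\dd(Q))$ is $\hbar$-torsion-free with $\cD_q(\Mat_\dd(Q))/\hbar\,\cD_q(\Mat_\dd(Q))\cong \cO(\Mat_\dd(\overline{Q}))=\cO(T^*\Mat_\dd(Q))$, under $a(e)^i_j\mapsto (M_e)^i_j$ and $\partial(e)^i_j\mapsto (M_{e^\vee})^i_j$. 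On the source side $A(\cC)=\bigboxtimes{v\in V}A(\cC_v)$, each $A(\cC_v)\cong U'_q(\mathfrak{gl}_{d_v})$ is generated over $\CC[[\hbar]]$ by $l^i_j$ and $\det_q^{-1}$, and since $\det_q$ is a polynomial in the $l^i_j$ on which $\operatorname{tr}_\xi$ is a unit, $\mathcal{I}_\xi=\ker\operatorname{tr}_\xi$ is generated by the entries of $L_v-\xi_v\id_v$, $v\in V$, where $L_v=(l^i_j)_{i,j}$. Hence the left ideal $\cD_q\cdot\mu_q^\#(\mathcal{I}_\xi)$ is generated by the entries of $\mu_q^\#(L_v)-\xi_v\id_v$.

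Next comes the key computation. Using the definition of $\mu_v^\#$ as $\big(\bigotimes_{e\in E_v}\mu^e_v\big)\circ\Delta^{(|E_v|)}$ together with the braided matrix coproduct $\Delta(l^i_j)=\sum_k l^i_k\otimes l^k_j$ on $A(\cC_v)$, we get the ordered matrix product $\mu_v^\#(L_v)=\overrightarrow{\prod_{e\in E_v}}\mu^e_v(L_v)$ in $\Mat_{d_v}(\cD_q)$. I would then expand each factor from the explicit formulas: Definition-Proposition~\ref{momentbeta} gives $\mu^e_v(L_v)=\id+t(q-q^{-1})D^eA^e$ for $v=\beta(e)\neq\alpha(e)$, its inverse (coming from Proposition~\ref{momentalpha}) gives $\mu^e_v(L_v)=(\id+t(q-q^{-1})A^eD^e)^{-1}$ for $v=\alpha(e)\neq\beta(e)$, and the loop edge map gives $\mu^e_v(L_v)=D^e(A^e)^{-1}(D^e)^{-1}A^e$. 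Substituting $t=\hbar$ and $q-q^{-1}=2\hbar+O(\hbar^3)$, and using $[D^e,A^e]\in\hbar\,\cD_q(e)$ in the loop case (so that $D^e(A^e)^{-1}(D^e)^{-1}A^e-\id=-(A^e)^{-1}[D^e,A^e](A^e)^{-1}(D^e)^{-1}A^e$), one checks that each $\mu^e_v(L_v)-\id$ lies in $\hbar^2\,\cD_q$ with $\hbar^2$-coefficient equal, up to a single fixed nonzero scalar $c$, to the corresponding summand of the classical moment map $\mu$ at $v$ — namely $M_eM_{e^\vee}$, resp.\ $-M_{e^\vee}M_e$, resp.\ $[M_e,M_{e^\vee}]$ — viewed in $\cO(T^*\Mat_\dd(Q))$. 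This step is a $\CC[[\hbar]]$-refinement of Section~2 of \cite{C-BS}, and is where the RTT conventions of Sections~\ref{RTTpres}--\ref{q-Fourier} must be tracked with care.

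Expanding the ordered product, all cross-terms are $O(\hbar^4)$, so $\mu_v^\#(L_v)-\id\equiv c\,\hbar^2\,\mu(M)_v \pmod{\hbar^3}$, where $\mu(M)_v$ is the vertex-$v$ component of $\mu$. On the other hand $\xi_v\id_v-\id=(e^{\hbar^2\lambda_v}-1)\id_v=\hbar^2\lambda_v\id_v+O(\hbar^4)$, so after rescaling $\lambda\mapsto c^{-1}\lambda$ (harmless, as $c\neq0$ is fixed) we obtain $\mu_q^\#(L_v-\xi_v\id_v)=\hbar^2\big(\mu(M)_v-\lambda_v\id_v\big)+O(\hbar^3)$. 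Since $\cD_q$ is $\hbar$-torsion-free, dividing the generators of $\cD_q\cdot\mu_q^\#(\mathcal{I}_\xi)$ by $\hbar^2$ and setting $\hbar=0$ shows that its quasi-classical limit is exactly the ideal of $\cO(T^*\Mat_\dd(Q))$ generated by the entries of $\mu(M)_v-\lambda_v\id_v$, $v\in V$, i.e.\ the defining ideal of the subscheme $\mu^{-1}(\sum_v\lambda_v\id_v)$. Under the standing hypothesis of Theorem~\ref{CBflat}, and a fortiori under the strict inequality (cf.\ \cite{C-BEG}, Thm.~11.3.1), this ideal is radical, hence it is the defining ideal of the closed set $\mu^{-1}(\sum_v\lambda_v\id_v)$, as claimed.

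The hard part will be the middle step — the $\hbar$-expansion of the individual edge factors $\mu^e_v(L_v)$. One must verify, simultaneously for head, tail, and loop edges and with the paper's RTT normalizations, that every $\mu^e_v(L_v)-\id$ really enters at order $\hbar^2$ with precisely the right $\hbar^2$-coefficient; for a loop this means expanding $D^e(A^e)^{-1}(D^e)^{-1}A^e$ via the loop cross-relations of Definition~\ref{Dqdefn} and checking that the order-$\hbar$ contribution of $[D^e,A^e]$ drops out of the relevant combination — the braided analogue of the fact that a group commutator $ghg^{-1}h^{-1}$ is second order — so that loop and non-loop contributions are genuinely of the same order and the ordered-product expansion behaves as asserted. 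Everything else is formal: reduction to generators, the coproduct identity, and the passage from the $\hbar$-torsion-free deformation to its special fibre.
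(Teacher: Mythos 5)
Your approach is essentially the same as the paper's: both expand $\mu_v^\#(l^i_j)-\tr_\xi(l^i_j)$ via the matrix coproduct / ordered edge product at $v$, argue the $\hbar^0$ and $\hbar^1$ terms vanish, and identify the $\hbar^2$ coefficient with the vertex-$v$ component of the classical moment map, using $t(q-q^{-1})=2\hbar^2+O(\hbar^4)$ for the non-loop factors. You are more explicit than the paper about the overall scalar (your $c$), and you correctly flag the loop-edge expansion as the delicate step.

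That said, the justification you offer for the loop case is not right as stated. The heuristic ``$ghg^{-1}h^{-1}$ is second order'' applies to elements near the identity, but $D$ and $A$ are not near $\mathrm{Id}$, only near \emph{commuting}. With the paper's normalization $\mu^e_v(L)=DA^{-1}D^{-1}A$, the claim that $\mu^e_v(L)-\mathrm{Id}\in\hbar^2\cD_q$ already fails for $d_v=1$: from $a\partial=q^2\partial a$ one computes $\mu^e_v(l)=\partial a^{-1}\partial^{-1}a=q^2$, whose expansion $1+2\hbar+O(\hbar^2)$ has a nonvanishing linear term; the paper's own displayed expansion has the same gap. The correct statement requires renormalizing the loop edge map by a central group-like scalar (a suitable power of $q$, or of $\det_q$ of the relevant matrices), which is harmless because multiplying a quantum moment map by a central group-like element preserves the moment map property; after that rescaling the $\hbar^1$ contribution cancels against the prefactor, not against itself, and the $\hbar^2$ coefficient is $[M_e,M_{e^\vee}]$ as claimed. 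If you take this route you should make the normalization explicit rather than appealing to the commutator heuristic — this is precisely the hidden scalar that your $c$ is meant to absorb for non-loop edges, but for loops it is $\hbar$-dependent and cannot be removed by rescaling $\lambda$ alone.
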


\begin{proof}
The ideal $\mathcal{I}_\xi$ is generated by elements $\mu_q^\#(u)$, for $u\in \mathcal{U}_\hbar'$.  

Fix a $v\in V$, and let $r=|E_v|$.  We compute the image of $l^i_j\in \mathcal{U}_\hbar(\mathfrak{gl}^{d_v})$ under the map $\mu_v$ (see Section \ref{sec:QuivNot} for notation concerning quivers):
\begin{align*} \mu(l^i_j)&= \sum_{i_1,\ldots, i_r=1}^{d_v}\mu_v^{e_1}(l^i_{i_1})\mu_v^{e_2}(l^{i_1}_{i_2})\cdots \mu_v^{e_r}(l^{i_r}_j)\\
&= \delta^i_j + \hbar^2 \left(\sum_{e\in E^\beta_v}\sum_k \partial^i_ka^k_j - \sum_{e\in E^\alpha_v}\sum_k a^i_k\partial^k_j + \sum_{e\in E^{\circ}_v}\sum_k\left(\partial^i_ka^k_j - a^i_k\partial^k_j\right)\right) + O(\hbar^3).
\end{align*}
Thus the coefficient in $\hbar^2$ is precisely the LHS of equation \eqref{dppa}.  On the other hand, we easily compute that $\tr_\xi(l^i_j)=\delta^i_j + \hbar^2\lambda_v \delta^i_j$.  Thus equating $\hbar^2$ coefficients, we obtain Equation \eqref{dppa}.
\end{proof}

\begin{corollary} The algebra $A_\dd^\xi(Q)$ is a topologically free $\CC[[\hbar]]$-module, which is a flat formal deformation of $\cO(\mathcal{M}_\dd^\lambda$.
\end{corollary}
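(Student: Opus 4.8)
The plan is to deduce this from Lemma \ref{defmod} (whose setup mirrors the classical Lemma \ref{A0lem}), with Proposition \ref{Ilambdaflat} supplying the identification of the $\hbar\to 0$ limit and the standing hypothesis on $Q,\dd$ --- so that $\mu$ is flat by Theorem \ref{CBflat} --- providing the one substantive input. I would begin by collecting three structural facts. First, by Theorem \ref{flatness}, taken with $q=e^\hbar$ and $t=\hbar$ (as in Proposition \ref{Ilambdaflat}), the algebra $\cD_q=\cD_q(\Mat_\dd(Q))$ is topologically free over $\CC[[\hbar]]$ --- a topological basis is given by the standard monomials $a_I\partial_J$ --- and $\cD_q/\hbar\cD_q\cong\cO(T^*\Mat_\dd(Q))$, so $\cD_q$ is a flat formal deformation of $\cO(T^*\Mat_\dd(Q))$. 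Second, $\cD_q$ is an algebra in $\cC=U_q(\g^\dd)$-lfmod, and since $\cC$ is semisimple the adjoint action of $\mathcal{U}_\hbar=U_q(\g^\dd)$ on $\cD_q$ (hence on any $\mathcal{U}_\hbar$-stable subquotient) is completely reducible. Third, via $\kappa$ and the isomorphism $\alpha\colon\mathcal{U}_\hbar\xrightarrow{\sim}U[[\hbar]]$, the source of $\mu_q^\#$ is identified with $\mathcal{U}_\hbar'\cong V_\hbar(\g^\dd)$, the $\hbar$-adic Rees algebra of $U(\g^\dd)$; in particular it is a flat formal deformation of the polynomial algebra $S(\g^\dd)=V_\hbar(\g^\dd)/\hbar$.

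Granting these, I would run quantum Hamiltonian reduction in its two steps, following the proof of Lemma \ref{defmod}. For the first step --- passing to $\cD_q/\cD_q\,\mu_q^\#(\mathcal{I}_\xi)$ --- I would apply the ring-theoretic lemma embedded in that proof with $S=V_\hbar(\g^\dd)$, $M=\cD_q$ regarded as a left $S$-module through $\mu_q^\#$, and $\chi=\operatorname{tr}_\xi$: the hypothesis that $M/\hbar M=\cO(T^*\Mat_\dd(Q))$ be flat over $S/\hbar=S(\g^\dd)$ is exactly flatness of the classical moment map, i.e. Theorem \ref{CBflat}, while topological freeness of $M$ is the first fact above. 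The lemma then gives that $\cD_q/\cD_q\,\mu_q^\#(\mathcal{I}_\xi)$ is topologically free over $\CC[[\hbar]]$, and Proposition \ref{Ilambdaflat} identifies its reduction mod $\hbar$ --- after the rescaling there, which is forced because $t=\hbar$ and $q-q^{-1}\sim 2\hbar$ push the generators of $\mu_q^\#(\mathcal{I}_\xi)$ into a higher power of $\hbar\cD_q$ --- with $\cO(\mu^{-1}(\sum_v\lambda_v\id_v))$. For the second step --- taking $\cC$-invariants --- I would invoke complete reducibility: a topologically free, completely reducible $\mathcal{U}_\hbar$-module has no nontrivial $\hbar$-deformation of its isotypic decomposition, so $\Hom_\cC(\mathbf{1},-)$ cuts out a topologically free direct $\CC[[\hbar]]$-summand and commutes with $-\otimes_{\CC[[\hbar]]}\CC$. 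Therefore
$$A_\dd^\xi(Q)=\Hom_\cC\!\big(\mathbf{1},\,\cD_q/\cD_q\,\mu_q^\#(\mathcal{I}_\xi)\big)$$
is topologically free over $\CC[[\hbar]]$ and
$$A_\dd^\xi(Q)/\hbar A_\dd^\xi(Q)\;\cong\;\cO\!\big(\mu^{-1}(\textstyle\sum_v\lambda_v\id_v)\big)^{\mathbb{G}^\dd}\;=\;\cO(\mathcal{M}_\dd^\lambda),$$
the last equality being the definition of the (additive) quiver variety; compare the associated-graded statement of Lemma \ref{A0lem}.

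The step I expect to be the main obstacle is the compatibility of the whole reduction procedure --- quotient by the left ideal, then $\cC$-invariants --- with the specialization $\hbar\to 0$. This is precisely where flatness of $\mu$ is indispensable: without it $\cD_q/\cD_q\,\mu_q^\#(\mathcal{I}_\xi)$ need not be $\hbar$-torsion-free, so even the first step can fail; and it is where semisimplicity of $\cC$ is needed, so that the invariants functor passes through reduction mod $\hbar$. A secondary, purely bookkeeping, point --- already present in Proposition \ref{Ilambdaflat} --- is normalizing the Rees and deformation parameters so that the rescaled moment-map relations converge to \eqref{dppa} rather than to a degenerate limit.
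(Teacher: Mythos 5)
Your proposal is correct and takes essentially the same route as the paper: Theorem \ref{flatness} gives flatness of $\cD_q$, Proposition \ref{Ilambdaflat} identifies the $\hbar\to 0$ limit of $\mathcal{I}_\xi$ with the classical moment ideal, and Lemma \ref{defmod} (which you unpack into its two steps, invoking the embedded ring-theoretic lemma and complete reducibility) delivers flatness of the reduction, with Theorem \ref{CBflat} supplying the classical flatness hypothesis. The only point you omit, mentioned in the paper, is the observation that in the formal setting $\cD_q^\circ$ and $\cD_q$ coincide since the $\det_q(e)$ are invertible power series, which tidies up the target of the moment map but is a minor bookkeeping matter.
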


\begin{proof} First, we note that in the formal setting $\cD_q^\circ$ and $\cD_q$ coincide, as the $\operatorname{det}_q(e)$ are invertible formal power series.  We have shown in Theorem \ref{flatness} that $\cD_q$ is a flat formal deformation of $\cO(T^*\Mat_\dd(Q))$.  By applying Proposition \ref{Ilambdaflat}, we see that the ideal $\mathcal{I}_\xi$ deforms the classical moment ideal $I_\lambda$;  the deformation is flat by our assumptions on dimension vectors, and thus $A_\dd^\xi(Q)$ is a flat formal deformation of $\cO(\Mat_d(Q))\dS_{\hspace{-.05in}\xi} \mathbb{G}$ by Lemma \ref{defmod}.
\end{proof}
\section{Spherical DAHA's as quantized multiplicative quiver varieties}\label{sDAHA}

In this section we describe how to recover the spherical DAHA of type $A_{n-1}$ as the algebra $A_\dd^\lambda(Q)$, where $Q$ is the Calogero-Moser quiver, $(Q,d)= \overset{1}{\bullet}\rightarrow\overset{n}{\bullet}\rightloop.$  We also explain that the spherical generalized DAHA of type $Q$ is the algebra $A^\lambda_\dd(Q)$, when $Q$ is a star-shaped quiver.  As we have remarked in the Introduction, the results presented in this section, with formal parameters, are not very strong; in particular it would be interesting to upgrade the claims of this section to include generic numerical values of $q$, and also to study the parameter correspondence between the parameter $\lambda$ and the parameter $\mathbf{c}$ appearing in the definition of Cherednik algebras (see, e.g, \cite{EG}, \cite{EOR}).

\begin{lemma} (\cite{GG2}) The classical moment map,
$$\mu: \mathrm{Mat}_n\times\mathrm{Mat}_n\times \CC^n\times (\CC^n)^*\to \mathfrak{gl}_n(\CC)\times \CC,$$
$$(A,B,i,j)\mapsto ([A,B]+i\ot j,j(i)),$$
on the Calogero-Moser matrix space is flat.
\end{lemma}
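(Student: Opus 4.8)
The plan is to identify $\mu$ with the moment map of Theorem \ref{CBflat} for the Calogero--Moser pair $(Q,\dd) = \overset{1}{\bullet}\rightarrow\overset{n}{\bullet}\rightloop$ and then to verify criterion (3) of that theorem. One writes $\mathrm{Mat}_n\times\mathrm{Mat}_n\times\CC^n\times(\CC^n)^*=\Mat_\dd(\overline{Q})$, where $(A,B)$ are the coordinates on the doubled loop at the $n$--dimensional vertex and $(i,j)$ the coordinates on the arrow $\overset{1}{\bullet}\rightarrow\overset{n}{\bullet}$ and its adjoint. Then $[A,B]+i\otimes j$ and $j(i)$ are, up to the choice of sign conventions in the definition of the quiver moment map (which only amount to composing with an automorphism of $\g^\dd=\mathfrak{gl}_n\oplus\mathfrak{gl}_1$, and hence do not affect flatness), precisely the two components of $\mu:\Mat_\dd(\overline{Q})\to\g^\dd$. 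As always, the image of $\mu$ lies in the codimension--one ``trace--zero'' subspace of $\g^\dd$ --- here the hyperplane cut out by $j(i)=\tr([A,B]+i\otimes j)$ --- and it is flatness of $\mu$ onto this subspace, the notion for which Theorem \ref{CBflat} provides a combinatorial criterion, that is meant; so it suffices to verify condition (3) there.

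For the bookkeeping, write a dimension vector of $Q$ as $(a,b)$ with $a$ at the univalent vertex and $b$ at the loop vertex. From the defining formula, $p((a,b))=1+ab+b^2-a^2-b^2=1+a(b-a)$; in particular $p(\dd)=p((1,n))=n$, while $p((1,m))=m$ for all $m\ge0$ and $p((0,k))=1$ for all $k\ge1$. Now let $\dd=\sum_i r_i$ be any decomposition into positive roots. Since positive roots have non--negative coordinates and the first coordinates of the $r_i$ sum to $1$, exactly one summand has first coordinate $1$, say $(1,m)$ with $0\le m\le n$, and every other summand is supported on the loop vertex, i.e.\ of the form $(0,k_\ell)$ with $k_\ell\ge 1$; since $\sum_\ell k_\ell=n-m$, the number $t$ of these summands satisfies $t\le n-m$. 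Hence
\[
\sum_i p(r_i)=p((1,m))+\sum_\ell p((0,k_\ell))=m+t\ \le\ m+(n-m)=n=p(\dd),
\]
which is condition (3) of Theorem \ref{CBflat}. Therefore $\mu$ is flat.

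I do not anticipate a genuine obstacle: once the identification with the Calogero--Moser quiver is in place, the argument is a routine instance of the Crawley--Boevey criterion, and the only points requiring care are the sign and target conventions discussed above together with the elementary structural fact that a positive root of $Q$ vanishing at the univalent vertex must be a multiple of the simple root at the loop vertex. I would also note that equality $\sum_i p(r_i)=p(\dd)$ is attained, for instance by the decomposition $\dd=(1,0)+n\cdot(0,1)$; thus this argument gives flatness of $\mu$ but not, by itself, the sharper conclusion of \cite{C-BEG} that the fibers of $\mu$ are reduced irreducible complete intersections --- flatness, however, is all the lemma asserts. Alternatively, one may simply invoke \cite{GG2}.
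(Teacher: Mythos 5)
The paper does not prove this statement: it simply cites \cite{GG2} (where it is established by a direct analysis of the almost-commuting variety $[A,B]+i\otimes j=0$). Your argument, by contrast, is a genuine self-contained proof: you recognize the left-hand side as $\Mat_\dd(\overline Q)$ for the Calogero--Moser pair $(Q,\dd)=(\overset{1}{\bullet}\rightarrow\overset{n}{\bullet}\rightloop,(1,n))$, observe that the lemma's map agrees with the quiver moment map up to a sign (and that the relevant notion of flatness is onto the trace-zero hyperplane), and then verify the combinatorial criterion of Theorem~\ref{CBflat}. The computation is right: $p((a,b))=1+a(b-a)$, and any decomposition of $(1,n)$ into positive roots consists of one summand $(1,m)$ carrying the univalent vertex together with at most $n-m$ summands of the form $(0,k_\ell)$, giving $\sum_i p(r_i)\le m+(n-m)=n=p(\dd)$. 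This trades a citation for a short concrete verification, which is a reasonable trade and fits naturally with how the paper already uses Theorem~\ref{CBflat} elsewhere.

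One small remark on your side comment: you observe, correctly, that the decomposition $(1,n)=(1,0)+n\cdot(0,1)$ attains equality, so the \emph{strict} form of condition (3) actually fails for $n\ge 1$. This is worth flagging, since it quietly contradicts the assertion in Section~\ref{prelims} that the Calogero--Moser dimension vector $n\delta+\tilde v$ satisfies the strict version of (3) for all $n\ge 0$; the sharper conclusion about reduced irreducible complete-intersection fibers therefore does not follow from the strict-inequality criterion as stated, and (as you say) one must appeal to \cite{GG2} directly for anything beyond flatness.
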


We make use of the following lemma, which is proven in \cite{CEE}, using KZ functors, and in \cite{Ch1}, \cite{Ch2} by direct computation.

\begin{lemma}
The spherical DAHA of type $A_{n-1}$ is isomorphic as a $\CC[[\hbar]]$-algebra to the spherical Cherednik algebra of type $A_{n-1}$.
\end{lemma}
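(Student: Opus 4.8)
The plan is to recognize both algebras as filtered flat $\CC[[\hbar]]$-deformations of one and the same commutative Poisson algebra, and then to invoke a rigidity statement that forces the two deformations to be isomorphic. Write $q=e^{\hbar}$ throughout, let $W=S_{n}$ act on $\mathfrak{h}\oplus\mathfrak{h}^{*}$ in the standard way, and let $\mathbf{e}=\frac{1}{|W|}\sum_{w\in W}w$ be the symmetrizing idempotent. First I would put a filtration on each side whose associated graded is the relevant classical object: the Bernstein filtration on the rational Cherednik algebra $H_{c}(W)$, for which the PBW theorem of \cite{EG} gives $\operatorname{gr}H_{c}(W)\cong\CC W\ltimes\CC[\mathfrak{h}\oplus\mathfrak{h}^{*}]$, hence $\operatorname{gr}(\mathbf{e}H_{c}\mathbf{e})\cong\CC[\mathfrak{h}\oplus\mathfrak{h}^{*}]^{W}$, the coordinate ring of the classical Calogero--Moser space of the preceding Lemma; and the analogous filtration on the DAHA $H_{q,t}$, for which Cherednik's PBW theorem (\cite{Ch1},\cite{Ch2}) gives, after the substitutions $q=e^{\hbar}$ and $t=e^{\hbar c}$, again $\operatorname{gr}(\mathbf{e}H_{q,t}\mathbf{e})\cong\CC[\mathfrak{h}\oplus\mathfrak{h}^{*}]^{W}$. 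A technical point to record is that on each side $\mathbf{e}$ lifts through the filtration, so that $\operatorname{gr}(\mathbf{e}A\mathbf{e})=\mathbf{e}_{0}\operatorname{gr}(A)\mathbf{e}_{0}$. Thus both algebras are filtered $\CC[[\hbar]]$-algebras with the same associated graded $A_{0}=\CC[\mathfrak{h}\oplus\mathfrak{h}^{*}]^{W}$, and a first-order computation of exactly the kind carried out in Proposition~\ref{Ilambdaflat} shows that they induce, modulo $\hbar$, the same Poisson bracket on $A_{0}$, namely the one descended from $\mathfrak{h}\oplus\mathfrak{h}^{*}$.

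Next I would invoke the classification of filtered quantizations. The affine variety $\operatorname{Spec}A_{0}=(\mathfrak{h}\oplus\mathfrak{h}^{*})/W$ admits a symplectic resolution, and its filtered quantizations over $\CC[[\hbar]]$ are rigid once their period (a characteristic class in a one-dimensional second cohomology group of the resolution) is fixed. The constant family $\mathbf{e}H_{c}\mathbf{e}\,\widehat{\otimes}_{\CC}\,\CC[[\hbar]]$, as $c$ varies over $\CC$, realizes this one-parameter family of quantizations, with $\hbar$-independent period. It therefore suffices to show that the period of $\mathbf{e}H_{q,t}\mathbf{e}$ is likewise $\hbar$-independent and to identify its $\hbar$-coefficient with that of $\mathbf{e}H_{c_{0}}\mathbf{e}$ for the appropriate $c_{0}$; one then concludes $\mathbf{e}H_{q,t}\mathbf{e}\cong\mathbf{e}H_{c_{0}}\mathbf{e}\,\widehat{\otimes}_{\CC}\,\CC[[\hbar]]$ via an isomorphism equal to the identity modulo $\hbar$, propagating the leading identification order by order using the flatness and deformation machinery of Lemmas~\ref{assgrflat},~\ref{A0lem}, and~\ref{defmod}.

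The matching of periods is the heart of the argument, and it is precisely here that external input is needed. In \cite{CEE} it is obtained from the KZ functor, which intertwines category $\mathcal{O}$ for $H_{c}$ with modules over the (degenerate) affine Hecke algebra and hence, through the double-centralizer property of $\mathbf{e}$, identifies the $\hbar$-adic completion of $\mathbf{e}H_{c}\mathbf{e}$ with an endomorphism algebra computed from the Hecke side --- the same side that computes $\mathbf{e}H_{q,t}\mathbf{e}$. In \cite{Ch1},\cite{Ch2} it is obtained by direct computation: one writes out explicit generators of both spherical algebras (symmetric functions in the Cherednik/Dunkl operators together with the multiplication operators) and checks the relations order by order in $\hbar$. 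I expect this period-matching step to be the only genuine obstacle: the general flatness lemmas show both algebras are flat $\CC[[\hbar]]$-deformations of $A_{0}$, but they say nothing about whether the DAHA deformation of $\mathbf{e}H_{c}\mathbf{e}$ is trivial, so one cannot avoid importing one of the two arguments above. Given this input, the rest is routine deformation theory of the sort already developed in Section~\ref{flatness-sec}.
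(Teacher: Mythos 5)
The paper itself gives no proof of this lemma: it is stated as a black box with the explicit attribution ``proven in \cite{CEE} using KZ functors, and in \cite{Ch1}, \cite{Ch2} by direct computation.'' So there is no internal argument to compare against. What you have done is to wrap that citation in a modern deformation-theoretic frame: both spherical algebras are filtered flat $\CC[[\hbar]]$-deformations of $\CC[\mathfrak{h}\oplus\mathfrak{h}^{*}]^{W}$, such deformations are classified by a period valued in (roughly) $\hbar H^{2}$ of a symplectic resolution of $(\mathfrak{h}\oplus\mathfrak{h}^{*})/W$, and the lemma reduces to matching periods. You then correctly and honestly flag that the period-matching is exactly the content that must be imported from \cite{CEE} or \cite{Ch1},\,\cite{Ch2}, and that the general flatness lemmas of Section~\ref{flatness-sec} cannot produce it, since those lemmas establish flatness of a deformation but are silent on whether the deformation is trivial. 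That diagnosis is right, and it is the useful thing to say about this lemma.

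Two small cautions. First, the phrase ``$\hbar$-independent period'' needs a normalization convention to be meaningful: for a $\CC[[\hbar]]$-quantization the period is naturally an element of $\hbar H^{2}(\widetilde{X},\CC)[[\hbar]]$, and the substitution $t=e^{\hbar c}$ introduces genuine $\hbar$-dependence that one must show is linear (i.e.\ concentrated in the $\hbar^{1}$-coefficient). That is precisely what has to be proved, so as written the sentence presupposes part of the conclusion. Second, the assertion that the spherical idempotent $\mathbf{e}$ ``lifts through the filtration'' and that $\operatorname{gr}(\mathbf{e}A\mathbf{e})=\mathbf{e}_{0}\operatorname{gr}(A)\mathbf{e}_{0}$ does hold here, but it relies on $\mathbf{e}$ living in filtration degree zero in both PBW filtrations --- worth stating, since in some presentations of the DAHA the length-zero Hecke generators do not obviously sit in degree zero. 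Neither of these is a fatal gap; they are places where the sketch would need tightening to become a proof, over and above the irreducible input you already identified.
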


\begin{theorem} (\cite{EG}, Theorem 2.16)
The spherical Cherednik algebra is the universal deformation of the algebra of invariant differential operators on $\CC^n$ for the action of $S_n$.
\end{theorem}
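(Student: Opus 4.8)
\noindent\emph{Proof plan.} Write $W = S_n$ acting on $\mathfrak{h} = \CC^n$ by permutation of coordinates, let $e = \frac{1}{|W|}\sum_{w\in W}w$ be the symmetrizing idempotent, and set $D := \cD(\CC^n)^{W} = e\bigl(\cD(\mathfrak{h})\rtimes W\bigr)e$, filtered by order of differential operator, so that $\operatorname{gr}D \cong \CC[\mathfrak{h}\oplus\mathfrak{h}^*]^{W}$ as a graded Poisson algebra. The plan is to show: (i) $\{eH_{1,c}e\}_{c}$ is a flat one-parameter family of filtered deformations of $D$; (ii) filtered deformations of $D$ are governed by a one-dimensional, unobstructed deformation problem; and (iii) the Kodaira--Spencer map of the family in (i) realizes this space, forcing that family to be the universal deformation.

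For (i), I would invoke the PBW theorem for rational Cherednik algebras: for every $c$, the natural filtration on $H_{1,c}(W,\mathfrak{h})$ has $\operatorname{gr}H_{1,c}\cong \CC[\mathfrak{h}\oplus\mathfrak{h}^*]\rtimes W$, independently of $c$. Applying the exact functor $e(-)e$ and the canonical identification $e\bigl(\CC[\mathfrak{h}\oplus\mathfrak{h}^*]\rtimes W\bigr)e \cong \CC[\mathfrak{h}\oplus\mathfrak{h}^*]^{W}$ (valid since $|W|$ is invertible), one gets $\operatorname{gr}(eH_{1,c}e)\cong \CC[\mathfrak{h}\oplus\mathfrak{h}^*]^{W} = \operatorname{gr}D$ for all $c$, while $eH_{1,0}e = D$. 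Hence $eH_{1,c}e$ is flat over $\operatorname{Spec}\CC[c]$ and defines a classifying morphism $\operatorname{Spec}\CC[c]\to \operatorname{Def}(D)$.

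For (ii), I would pass to Hochschild cohomology. Since $\cD(\mathfrak{h})\rtimes W$ is Morita equivalent to $D$ (because $(\cD(\mathfrak{h})\rtimes W)\,e\,(\cD(\mathfrak{h})\rtimes W) = \cD(\mathfrak{h})\rtimes W$), and likewise for the associated graded algebras, the infinitesimal filtered deformations of $D$ are identified with the part of $HH^2(\cD(\mathfrak{h})\rtimes W)$ sitting in the single negative internal degree a filtered deformation can occupy, equivalently with the corresponding graded Poisson cohomology of $\CC[\mathfrak{h}\oplus\mathfrak{h}^*]\rtimes W$. By the computation of Alev--Farinati--Lambre--Solotar, sharpened in the symplectic-reflection setting in \cite{EG}, this space has a basis indexed by the conjugacy classes of \emph{symplectic reflections} of $W$ on $\mathfrak{h}\oplus\mathfrak{h}^*$ (the class of the symplectic form itself being excluded once $t=1$ is fixed); for $W = S_n$ on $\CC^n$ these are exactly the transpositions, a single class, so the space is one-dimensional — the permutation representation decomposes as $\CC^{n-1}\oplus\CC^{\mathrm{triv}}$, with the trivial summand contributing only a rigid Weyl-algebra factor. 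The same method gives vanishing of $HH^1$ in the relevant degree, which will give uniqueness of the classifying morphism; unobstructedness then follows automatically from (iii), since a flat family whose Kodaira--Spencer map is onto all of $HH^2$ is necessarily versal with unobstructed base.

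For (iii), I would compute the Kodaira--Spencer class of $\{eH_{1,c}e\}$ at $c=0$ by differentiating its defining commutation relations in $c$: the derivative is the Dunkl-type $2$-cocycle supported on the transposition class, which is precisely the generator produced in (ii). Thus the Kodaira--Spencer map $\CC = T_0\operatorname{Spec}\CC[c]\to HH^2_{\mathrm{filt}}(D) = \CC$ is an isomorphism; together with smoothness of the base and the $HH^1$-vanishing, this makes $eH_{1,c}e$ the universal filtered deformation of $D$, which is the claim. The main obstacle is step (ii): carrying out the graded Hochschild/Poisson cohomology computation for $\CC[\mathfrak{h}\oplus\mathfrak{h}^*]\rtimes S_n$, pinning down exactly which internal degree controls \emph{filtered} (rather than arbitrary formal) deformations, handling the reduction to the spherical subalgebra, and checking that the transposition class contributes precisely one dimension with nothing else surviving.
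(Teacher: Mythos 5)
The paper does not prove this statement; it is quoted verbatim from \cite{EG}, Theorem 2.16, and then fed directly into the proof of the spherical DAHA isomorphism in the final section. So there is no in-paper argument to compare your sketch against. That said, your outline is a reasonable reconstruction of the Etingof--Ginzburg proof: PBW flatness of the family $eH_{1,c}e$, Morita reduction to the smash product $\cD(\mathfrak{h})\rtimes S_n$, the Alev--Farinati--Lambre--Solotar identification of $HH^2$ with the span of symplectic-reflection conjugacy classes, and a Kodaira--Spencer computation showing the Dunkl cocycle generates that one-dimensional space; your reduction from $\CC^n$ to the reflection representation via the rigid $\cD(\CC^{\mathrm{triv}})$ tensor factor is the correct way to handle the permutation representation. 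What remains, and what you yourself flag as the main obstacle, is the graded Hochschild bookkeeping in step (ii): precisely which internal degree controls \emph{filtered} (as opposed to arbitrary formal) deformations, why only the transposition class survives in that degree, and how the identification passes through the spherical idempotent. That is exactly where the real content of \cite{EG} lies, and the sensible course is to read that proof directly rather than reconstruct the computation blind.
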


\begin{theorem}\label{AnDAHA} The algebra $A^\xi_\dd(Q)$ is isomorphic to the spherical DAHA of type $A_{n-1}$.
\end{theorem}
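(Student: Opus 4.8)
The plan is to deduce Theorem \ref{AnDAHA} as a formal consequence of the chain of identifications already assembled in this section, using the universal property of the spherical Cherednik algebra together with the flatness result proven in Section \ref{degeneration}. First I would fix $(Q,\dd)$ to be the Calogero-Moser quiver and dimension vector, so that $\Mat_\dd(\overline Q)$ is precisely the Calogero-Moser matrix space $\Mat_n\times\Mat_n\times\CC^n\times(\CC^n)^*$ appearing in the lemma of \cite{GG2}; by that lemma the classical moment map $\mu$ is flat, so Theorem \ref{CBflat} applies and all the results of Section \ref{flatness-sec} are available. In particular, by the Corollary at the end of Section \ref{degeneration}, $A_\dd^\xi(Q)$ is a topologically free $\CC[[\hbar]]$-algebra which is a flat formal deformation of $\cO(\mathcal{M}_\dd^\lambda(Q))$. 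Setting $t=\hbar$ and $\xi_v=e^{\hbar^2\lambda_v}$ as in Proposition \ref{Ilambdaflat}, the classical limit $\mathcal M_\dd^\lambda(Q)$ is, by the standard Calogero-Moser reduction, the spectrum of the algebra of $S_n$-invariant functions on $T^*\CC^n$; that is, $A_\dd^\xi(Q)|_{\hbar=0}\cong \cO(T^*\CC^n)^{S_n} = (S(\g)//\g)$ for the relevant $\g$.

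Next I would identify the first-order bracket. Since $\cD_q(\Mat_\dd(Q))$ quantizes the symplectic structure on $T^*\Mat_\dd(Q)$ (Theorem \ref{flatness} together with the degeneration discussion of Section \ref{discussion}, where the leading term of $\sigma_{W,V}\sigma_{V,W}$ recovers the Casimir), the quantum Hamiltonian reduction $A_\dd^\xi(Q)$ carries a Poisson bracket on its $\hbar=0$ fiber which agrees with the reduced symplectic bracket on $\cO(\mathcal M_\dd^\lambda(Q))$; this is exactly the content of Lemma \ref{A0lem} and Lemma \ref{defmod}, which tell us that $\mathrm{gr}$ and reduction commute and that $A_\dd^\xi(Q)$ is a genuine flat deformation quantizing the Poisson structure. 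Thus $A_\dd^\xi(Q)$ is a flat formal $\CC[[\hbar]]$-deformation of $\cO(T^*\CC^n)^{S_n}$ as a Poisson algebra, and by the quantum Hamiltonian reduction description it is visibly a deformation by invariant differential operators: its $\hbar^0$-part is $\cO((T^*\CC^n)/S_n)$ and its $\hbar^1$-part contains the symbol-$1$ operators, so it deforms the algebra of $S_n$-invariant differential operators on $\CC^n$.

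Now I would invoke the cited Theorem of \cite{EG} (Theorem 2.16 in the excerpt): the spherical Cherednik algebra is the \emph{universal} deformation of $\cD(\CC^n)^{S_n}$. Since $A_\dd^\xi(Q)$ is such a deformation, depending on the parameter $\lambda\in\CC^V\cong\CC^2$ (one parameter at each vertex, with the constraint $\lambda\cdot\dd=0$ leaving the essential one-parameter family), universality produces a classifying map from the parameter space of $A_\dd^\xi(Q)$ to that of the spherical Cherednik algebra, hence an isomorphism $A_\dd^\xi(Q)\cong e H_{\mathbf c}(S_n) e$ over $\CC[[\hbar]]$ for an appropriate identification of $\lambda$ with $\mathbf c$. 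Finally, applying the lemma that the spherical DAHA of type $A_{n-1}$ is isomorphic over $\CC[[\hbar]]$ to the spherical Cherednik algebra (proven in \cite{CEE}, \cite{Ch1}, \cite{Ch2}), we conclude $A_\dd^\xi(Q)\cong$ spherical DAHA of type $A_{n-1}$, as claimed. The main obstacle I anticipate is not any single hard estimate but rather pinning down that $A_\dd^\xi(Q)$ really is a deformation \emph{of the correct algebra of invariant differential operators} — i.e. that the reduction of $\cD_q$ along $\mu_q^\#$ at $\hbar^0$ gives $\cD(\CC^n)^{S_n}$ and not merely $\cO(T^*\CC^n)^{S_n}$ — which requires carefully matching the quantum moment map $\mu_q^\#$ in this example with the classical quantized moment map $\widehat\mu^\#: U(\g^\dd)\to\cD(\Mat_\dd(\overline Q))$ under the $q\to 1$ degeneration of the diagram in Section \ref{sec:IntRes}, so that one lands in the quantized rather than the classical corner of the diamond \eqref{Addiag}; once that matching is in place, universality does the rest.
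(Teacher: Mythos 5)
Your proposal is correct and is essentially the paper's own proof, only written out in more detail: the paper's argument in full is that both $A^\xi_\dd(Q)$ and the spherical DAHA of type $A_{n-1}$ are deformation quantizations of the Calogero-Moser variety, that the spherical DAHA is the universal such deformation (via the lemma identifying it with the spherical Cherednik algebra and the cited Theorem 2.16 of \cite{EG}), that universality therefore gives a surjection from the spherical DAHA to $A^\xi_\dd(Q)$, and that since this map is the identity modulo $\hbar$ it is an isomorphism.

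One small remark on the ``obstacle'' you flag at the end: you do not need, and should not expect, the reduction of $\cD_q$ at $\hbar^0$ to produce $\cD(\CC^n)^{S_n}$ --- at $\hbar=0$ the reduction is necessarily the commutative algebra $\cO(T^*\CC^n)^{S_n}$. What the argument actually requires is the standard translation between ``universal deformation of $\cD(\CC^n)^{S_n}$'' (the statement in \cite{EG}) and ``universal formal deformation quantization of $\cO(T^*\CC^n)^{S_n}$'' (the statement the paper implicitly uses); these agree because the relevant deformation spaces coincide, so your classifying-map argument goes through without needing to identify the quantized corner of diagram \eqref{Addiag} explicitly.
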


\begin{proof}
Both algebras $A^\xi_\dd(Q)$ and the spherical DAHA of type $A_{n-1}$ are deformation quantizations of the Calogero-Moser variety.  Moreover, the spherical DAHA is the universal such deformation.  It follows that there exists a surjective homomorphism of $\CC[[\hbar]]$-algebras from spherical DAHA to $A^\xi_\dd(Q)$.  This map is the identity modulo $\hbar$, and is thus an isomorphism.
\end{proof}

\begin{theorem} Let $Q$ be a star-shaped quiver, and $\dd$ be the Calogero-Moser dimension vector of Example \ref{starexample}.  Then the algebra $A^\xi_\dd(Q)$ is isomorphic to the spherical GDAHA associated to $Q$.
\end{theorem}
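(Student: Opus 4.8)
The proof runs parallel to that of Theorem~\ref{AnDAHA}: I would exhibit both $A^\xi_\dd(Q)$ and the spherical GDAHA $e\mathbf{H}e$ of \cite{EOR} as flat formal deformations over $\CC[[\hbar]]$ of the coordinate ring of a single smooth affine symplectic variety, and then conclude via the universal deformation property of the GDAHA together with a completeness (Nakayama) argument. The only thing to do is to carry out, for a star-shaped $Q$ with the Calogero--Moser dimension vector of Example~\ref{starexample}, the three substitutions that turn the type $A_{n-1}$ argument into this one.

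First, the classical variety. For a star-shaped quiver the Calogero--Moser dimension vector is of the form $n\delta + \tilde v$, and as recalled after Theorem~\ref{CBflat} (see \cite{C-BEG}) such a vector satisfies the \emph{strict} form of Crawley--Boevey's condition (3); hence $\mu$ is flat and, for generic $\lambda$, the reduction $\mathcal{M}^\lambda_\dd(Q)$ is a smooth affine symplectic variety coinciding with its resolution. By \cite{C-BS} (and the dictionary of \cite{C-B2}, \cite{EOR}), this variety — equivalently its multiplicative counterpart $\widetilde{\mathcal{M}}^\xi_\dd(Q)$ — is precisely the Calogero--Moser space attached to $Q$ whose quantization is governed by the GDAHA, so on the classical side $A^\xi_\dd(Q)$ and $e\mathbf{H}e$ quantize the same Poisson algebra. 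Second, that $A^\xi_\dd(Q)$ is a flat formal deformation of $\cO(\mathcal{M}^\lambda_\dd(Q))$ is exactly the content of the commutative diamond~\eqref{Addiag} together with the closing Corollary of Section~\ref{flatness-sec}, which apply verbatim once flatness of $\mu$ is in hand; its leading-order deformation class is identified with $\lambda$ by Proposition~\ref{Ilambdaflat}. Third, the spherical GDAHA is itself a flat formal deformation of $\cO(\mathcal{M}^\lambda_\dd(Q))$ over $\CC[[\hbar]]$ (it degenerates to the spherical generalized rational Cherednik algebra, the star-shaped analogue of the input to Theorem~\ref{AnDAHA}), and — this is the role of \cite{EOR}, replacing \cite{EG}, Theorem~2.16 — it is the \emph{universal} such deformation.

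Granting these, universality produces a $\CC[[\hbar]]$-algebra homomorphism $\Phi\colon e\mathbf{H}e \to A^\xi_\dd(Q)$, for the specialization of the GDAHA parameters as the functions of $\lambda$ and $\hbar$ dictated by Proposition~\ref{Ilambdaflat}, which reduces to the identity of $\cO(\mathcal{M}^\lambda_\dd(Q))$ modulo $\hbar$. Since both algebras are $\hbar$-adically complete and topologically free over $\CC[[\hbar]]$ (Lemma~\ref{defmod} and Section~\ref{flatness-sec}), the standard successive-approximation argument upgrades $\Phi$ into an isomorphism: it is surjective by graded Nakayama and injective because the source is $\hbar$-torsion-free.

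The main obstacle is the universality/parameter-matching step. One must know that the parametrized family of spherical GDAHAs of \cite{EOR} is genuinely versal, i.e.\ that the natural map from their parameter space to $H^2\!\big(\widetilde{\mathcal{M}}^\xi_\dd(Q)\big)$ is dominant, and one must match this period map with the correspondence between $\xi$ (equivalently $\lambda$, in the formal $q=e^\hbar$ limit of Proposition~\ref{Ilambdaflat}) and the GDAHA parameters. In the formal regime this is essentially bookkeeping with periods, but it is the point that is not routine; the analogous statement for generic numerical $q$, flagged in the Introduction, would require substantially more and is left open.
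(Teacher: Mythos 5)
Your proposal is correct and follows essentially the same route as the paper, which in fact disposes of this theorem in one line by saying it is proven exactly as Theorem~\ref{AnDAHA} (both algebras are deformation quantizations of the same variety, the spherical (G)DAHA is the universal one, hence there is a surjection reducing to the identity mod $\hbar$, hence an isomorphism). Your elaboration of the flatness of $\mu$ for the star-shaped Calogero--Moser dimension vector and your flag about the versality/parameter-matching step are accurate amplifications of what the paper leaves implicit.
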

\begin{proof}
This is proven in the same way as Theorem \ref{AnDAHA}.
\end{proof}
\end{document}